\newtheorem{theorem}{Theorem}
\newtheorem{lemma}[theorem]{Lemma}
\newtheorem{proposition}[theorem]{Proposition}
\newtheorem{definition}[theorem]{Definition}
\newtheorem{remark}[theorem]{Remark}
\newcommand{\T}{{\mathbb T}}
\newcommand{\N}{{\mathbb N}}
\newcommand{\Z}{{\mathbb Z}}
\newcommand{\Q}{{\mathbb Q}}
\newcommand{\R}{{\mathbb R}}
\newcommand{\pa}{{\partial}}
\newcommand{\na}{{\nabla}}
\newcommand{\dis}{{\displaystyle}}
\newcommand{\eps}{{\varepsilon}}
\newcommand{\be}{\begin{equation}}
\newcommand{\ee}{\end{equation}}
\newcommand{\essinf}{\mathrm{ess inf}}
\newcommand{\ueapp}{u^\varepsilon_\text{app}}
\newcommand{\re}{r^\eps}
\newcommand{\Gae}{\Gamma^\eps}
\newcommand{\Om}{\Omega}
\newcommand{\dv}{\mathrm{div}\;}
\newcommand{\om}{\omega}
\newcommand{\we}{w^\eps}
\newcommand{\ue}{u^\eps}
\newcommand{\qe}{q^\eps}
\def\div{\hbox{div  }}
\def\dis{\displaystyle}
\title{Effective boundary condition at a rough surface \\
starting  from a slip condition} 
\author{\footnote{DMA/CNRS, Ecole Normale Sup\'erieure, 45 rue d'Ulm, 75005
   Paris, FRANCE, tel: +33 1 44 32 20 58, fax: +33 1 44 32 20 80 (corresponding author).} Anne-Laure Dalibard 
and 
 \footnote{IMJ, 175 rue du Chevaleret, 75013 Paris. } David G\'erard-Varet }
\date{}
\begin{document}
\maketitle

\begin{abstract}
We consider the homogenization of the  Navier-Stokes equation, set  in a
channel with a  rough boundary,  of small amplitude and
wavelength $\epsilon$. It was shown recently  that, for any  
non-degenerate roughness pattern, and for any  reasonable condition  imposed at the rough boundary,  the
homogenized boundary condition in the limit $\eps = 0$ is always
 no-slip.   We give in this paper error estimates for this homogenized
 no-slip condition,  and provide a  more accurate effective  boundary
 condition, of Navier type. Our result extends those obtained in
 \cite{Basson:2007,GerMas}, in which  the special  case of a  Dirichlet
 condition at the rough boundary  was examined. 
\end{abstract}

{{\em Keywords:} \small Wall laws, rough boundaries, homogenization,
  ergodicity, Korn inequality} 
 
\section{Introduction}
Most works on Newtonian liquids  assume the
validity of the no-slip boundary condition: the  velocity field of the
liquid  at a solid surface equals the velocity field of the surface itself. 
This assumption relies  on both theoretical  and experimental studies,
carried over more than a century. 

\medskip
Still,  with the  recent surge of activity around microfluidics, the
question of fluid-solid interaction has been reconsidered, and the
consensus  around the no-slip condition has been questioned. Several
 experimentalists, observing for instance water over mica, have reported
 significant slip. More generally, it has been claimed that, in many
 cases,  the liquid velocity  field $u$ obeys a Navier condition at
 the solid boundary $\Sigma$:  
\begin{equation} \label{Navier} \tag{Na}
 (I_d - \nu \otimes  \nu ) u\vert_{\Sigma}  \:  =  \:  \lambda   (I_d - \nu
 \otimes  \nu ) D(u)\nu \vert_{\Sigma},  
 \quad u \cdot \nu\vert_{\Sigma}  \: = \:  0,  \quad \lambda > 0 
\end{equation}
 where $\nu$ is an inward normal vector to $\Sigma$, and $D(u)$ is the
 symmetric part of the gradient.  Slip lengths
 $\lambda$ up to a few micrometers have been measured. This is far more than the
 molecular scale, and would therefore invalidate the (macroscopic) no-slip condition 
\begin{equation} \label{Dirichlet} \tag{Di}
u\vert_{\Sigma}  = 0. 
\end{equation}
Nevertheless,  such experimental  results are  widely debated. 
For similar experimental settings,  there are huge discrepancies
between the measured values of $\lambda$.  We refer to the article
\cite{Lauga:2007}  for an overview. 

\medskip
In this debate around boundary conditions, the irregularity of the solid
surface is a major issue. Again, its effect is a topic of intense
discussion. On one hand, some people argue that it  increases the surface of
friction, and may cause a decrease of the slip. On the other
hand, it may generate small scale phenomena favourable to slip. For
instance, some rough hydrophobic surfaces seem more slippery due to the
trapping of air bubbles in the humps of the roughness. Moreover,
irregularity creates a boundary layer in its vicinity, meaning high
velocity gradients. Thus, even though \eqref{Dirichlet}  is satisfied
at the rough boundary, there may be significant velocities right above. In
other words, the no-slip condition may hold at the {\em small} scale of the
boundary layer but not at the {\em large} scale of the mean flow. This
phenomenon, due to scale separation, is called {\em apparent slip} in
the physics litterature. 

\medskip
In parallel to experimental works, several theoretical studies have  
been carried, so as to clarify the role of roughness. Many 
of them relate to homogenization theory. First,  the irregularity is modeled  by
small-scale variations of the boundary. Then, an
asymptotic analysis is performed, as  the small scales go to zero.  The idea is to
replace the {\em constitutive boundary condition}  at the  {\em rough}
surface by a homogenized or {\em effective boundary condition} at the
smoothened surface.  In this way, one can describe the averaged
effect of the roughness.  We stress that such homogenized conditions
(often called {\em wall laws}) are also  of
practical interest in numerical codes. They allow to filter out the
small scales of the boundary, which have a high
computational cost.

\medskip
Let us recall briefly the main mathematical results on wall
laws. To give a unified description,  we take a single
model. Namely, we consider a  two-dimensional rough channel 
$$\Omega^\eps \: :=  \: \Omega \cup \Sigma \cup R^\eps $$
where $\Omega = \R \times (0,1)$ is the {\em smooth part}, $R^\eps$ is the
rough part, and $\Sigma = \R \times \{0\}$ their interface. We assume that
the rough part has typical size $\eps$, that is 
$$ \qquad   R^\eps \: := \: \eps R, \quad  R \: :=  \: \left\{ y, \: 0 >  y_2 >
\omega\left(y_1\right) \right\} $$ 
for a  Lipschitz function $\omega : \R \mapsto (-1,0)$. We also
introduce 
$$ \Gamma^\eps \: := \:  \eps \, \Gamma, \quad \Gamma \: := \: \left\{
  y, \:   y_2 =
\omega\left(y_1\right) \right\} $$ 
See Figure \ref{fig1} for  notations. 
\begin{figure} \label{fig1}
\begin{center}
\includegraphics[height = 5.5cm,width=8.5cm]{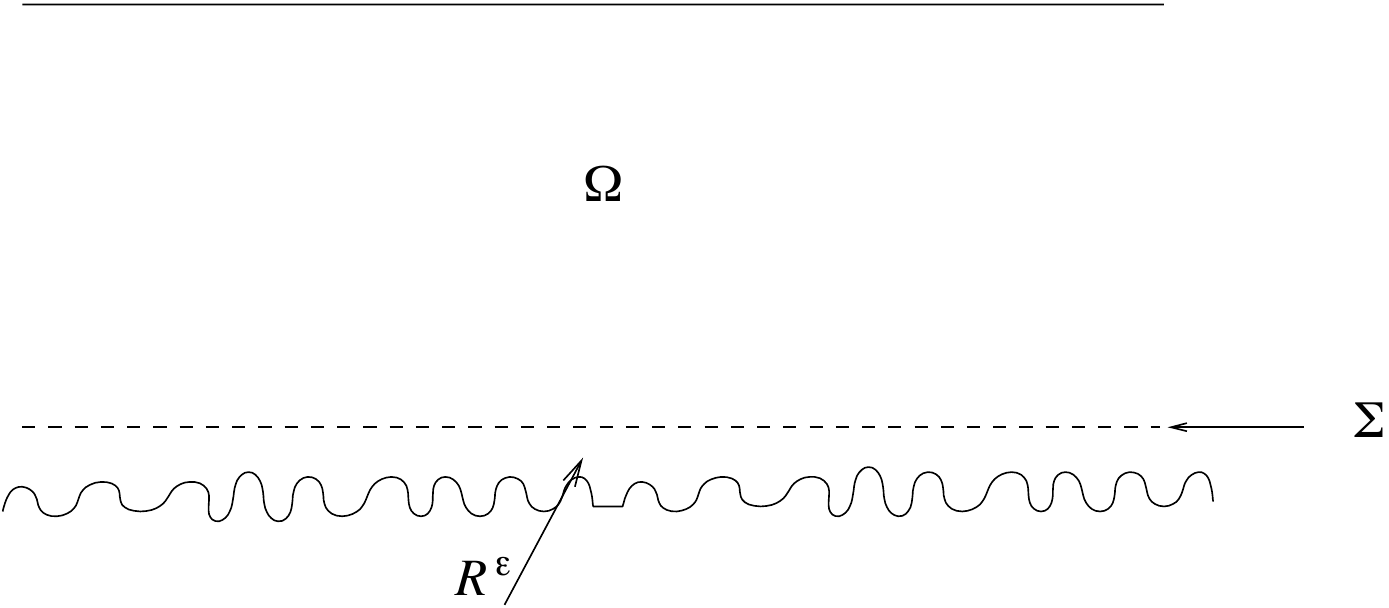}
\end{center}
\caption{The rough domain $\Omega^\eps$.}
\end{figure}
 We consider in this channel a steady flow $u^\eps$. It is modeled by the  stationary 
 Navier-Stokes system, with a prescribed flux $\phi$ across a vertical
 cross-section   $\sigma^\eps$ of $\Omega^\eps$. Moreover, to cover
 all interesting cases, we shall  consider
 either pure slip, partial slip  or no-slip at the rough boundary
 $\Gamma^\eps$. This  means that the
 constant $\lambda$ below shall be either $+\infty$,  positive or zero. For
 simplicity, we assume no-slip at the upper boundary. We get eventually
\begin{equation} \label{NSeps} \tag{NS$^\eps$}
\left\{ 
\begin{aligned}
 - \Delta u^\eps + u^\eps \cdot \na u^\eps + \na p^\eps  &  
= 0, \: x \in \Omega^\eps, \\
\div u^\eps  &   = 0, \: x \in \Omega^\eps, \\
 u^\eps\vert_{x_2 = 1}    = 0,  \: \int_{\sigma^\eps} u^\eps_1  &  =
 \phi, \\
(I_d - \nu \otimes  \nu ) u^\eps\vert_{\Gamma^\eps}  \:    =  \:  \lambda^\eps   (I_d
- & \nu \otimes  \nu )  D(u^\eps)\nu\vert_{\Gamma^\eps}, \: u^\eps \cdot \nu\vert_{\Gamma^\eps} \:
= \:  0. 
\end{aligned}
\right.
\end{equation}
Notice  that the flux integral in the third equation does
not depend on the location  of the cross-section $\sigma^\eps$, thanks to the
divergence-free and impermeability conditions.   
We also emphasize that this problem has a  singularity  in $\eps$, due
to the high  frequency oscillation of the boundary. 
Thus,  the problem is to  {\em replace the singular
  problem in $\Omega^\eps$ by a regular  problem in $\Omega$}. The idea
 is to  keep the same Navier-Stokes equations  
 \begin{equation} \label{NS} \tag{NS}
\left\{
\begin{aligned}
- \Delta u + u \cdot \na u + \na p  & = 0, \: x \in \Omega, \\
\div u  & = 0, \: x \in \Omega, \\
u\vert_{x_2=1}  = 0,  \quad \int_{\sigma} u_1 & = \phi,
\end{aligned} 
\right.
\end{equation}
but  with a   boundary condition at the artificial boundary
$\Sigma$ which is regular in $\eps$.  The problem  is to find the most
accurate  condition.

\medskip
A series of papers has addressed this problem, starting from the standard
Dirichlet condition  at $\Gamma^\eps$ ($\lambda^\eps = 0$ in \eqref{NSeps}). 
Losely, two main facts have been established:
 \begin{enumerate}
\item {\em For any roughness profile $\omega$, the Dirichlet condition \eqref{Dirichlet}
  provides a $O(\eps)$ approximation of $u^\eps$ in
  $L^2_{uloc}(\Omega)$.} 
\item {\em For generic roughness profile $\omega$, the Navier condition
does better, choosing $\lambda =
  \alpha \eps$ for some good constant $\alpha$ in \eqref{Navier}}.
\end{enumerate} 
Of course,  such statements are only the crude translations of
cumulative rigorous results. Up to our knowledge, the pioneering results
on wall laws are due to  Achdou, Pironneau and Valentin \cite{Achdou:1995,Achdou:1998}, and
J\"ager and Mikelic \cite{Jager:2001,Jager:2003}, who considered periodic roughness
profiles $\omega$.  See also \cite{Amirat:2001a} on this periodic case. The
extension to arbitrary roughness profiles has been studied by the
second author (and coauthors) in articles \cite{Basson:2007,DGV:2008,GerMas}. The expression {\em
  generic roughness profile} means functions $\omega$ with
ergodicity properties (for instance, $\omega$ is random stationary, or
almost periodic). We refer to the forementioned works for all details and
rigorous statements.  Let us just mention that the slip length $\alpha
\,  \eps$ is related to  a boundary layer  of amplitude $\eps$ near
the rough boundary. It is the mathematical expression of the 
apparent slip discussed earlier. 

\medskip
Beyond the special  case $\lambda^\eps = 0$, some studies  have dealt with 
the general case  $\lambda^\eps \in [0,+\infty]$.  The limit
$u^0$  of $u^\eps$,  and the   condition that it satisfies at $\Sigma$ have been
 investigated. In brief, the  striking conclusion  of these studies  is
 that, {\em as soon as the boundary is genuinely rough, $u^0$
   satisfies a no-slip condition at $\Sigma$}.  This idea has been
 developped in \cite{Simon:2003} for a periodic roughness pattern $\omega$. It
 has been  generalized to arbitrary roughness pattern  in \cite{Bucur:2008}. In this last
 article, the assumption of {\em genuine roughness} is expressed in
 terms of Young measure.  When recast in our 2D setting, it reads:
\begin{center}
{\em (H)   The family of Young measures
  $\bigl(d\mu_{y_1}\bigr)_{y_1}$   associated with the  sequence  
  $\bigl(\omega'(\cdot/\eps)\bigr)_\eps$ is s.t.
$$  d\mu_{y_1} \neq \delta_{0}  \mbox{ (the Dirac mass at zero), for
  almost every }  \: y_1 \in \R.$$  }
\end{center}
Under (H),  one can show  that $u^\eps$ locally converges in $H^1$-weak
to the famous Poiseuille flow: 
\begin{equation*}
u^0(x) \: = \: \left(U^0(x_2),0\right), \quad U^0(x_2) \: = \:  6 \phi x_2
(1-x_2) 
\end{equation*} 
which is solution of \eqref{NS}-\eqref{Dirichlet}.  We refer to
\cite{Bucur:2008} for all details.  

\medskip
This result can be seen as a mathematical justification of the no-slip
condition. Indeed, any realistic boundary is rough. If one is only 
interested in  scales greater than the scale $\eps$ of the roughness,
then \eqref{Dirichlet} is an appropriate  boundary condition, whatever the
microscopic phenomena behind. Still, as in the case $\lambda^\eps = 0$,  
one may be interested in more quantitative estimates. How good is the
boundary condition? Can it be improved? Is there possibility of a
$O(\eps)$ slip? Such questions are especially important in
microfluidics,  a domain in which minimizing wall friction is
crucial (see \cite{Bocquet:2007}).

\medskip
The aim of the present article is to address these questions. We shall
extend to an arbitrary slip length $\lambda^\eps$ the kind of  results obtained for
$\lambda^\eps = 0$.  Of course, as in the works  mentioned above, we must
assume some non-degeneracy of the roughness pattern. 
We make the following assumption:
\begin{center}
{\em (H') There exists ${\cal C} > 0$, such that  for all 2-D fields $u \in
  C^\infty_c\left(\overline{R}\right)$ satisfying $u\cdot \nu\vert_{\Gamma} =
  0$,
$$     \| u \|_{L^2(R)} \: \le \:  {\cal C}\, \| \na u \|_{L^2(R)} . $$
}
\end{center}
Assumption (H'), and its relation to the assumption (H) will be
discussed thoroughly in the next section. 
Broadly, we obtain two main results. The first one is 
\begin{theorem}  \label{Direstimates}
There exists $\phi_0 > 0$, such that for  all $|\phi| < \phi_0$, for all 
 $\eps \: \le \:  1$, system \eqref{NSeps} has a unique solution $u^\eps$ in
$H^1_{uloc}(\Omega^\eps)$. Moreover, if $\lambda^\eps = 0$ or
if  (H') holds, one has 
$$ \| u^\eps - u^0 \|_{H^1_{uloc}(\Omega^\eps)} \:  \le \:  C \, \phi
\,  \sqrt{\eps}, \quad \| 
u^\eps - u^0 \|_{L^2_{uloc}(\Omega)} \: \le \:    C \,  \phi \, \eps, $$
where $u^0$ is the Poiseuille flow, satisfying \eqref{NS}-\eqref{Dirichlet}. 
\end{theorem}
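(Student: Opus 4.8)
I would first lift the flux: choose a divergence-free $\Phi^\eps$ on $\Omega^\eps$, equal to the Poiseuille profile in a fixed neighbourhood of $\Sigma$, vanishing near $\{x_2=1\}$ and near $\Gamma^\eps$, and carrying flux $\phi$, and look for $u^\eps=\Phi^\eps+w^\eps$. Then $w^\eps$ solves a Navier--Stokes system with zero flux, a source of size $O(\phi)$, no-slip on $\{x_2=1\}$, and the same Navier condition on $\Gamma^\eps$ (since $\Phi^\eps\equiv 0$ there). I would solve this by the usual scheme in $H^1_{uloc}$: a Saint-Venant estimate bounding the local energy $\int_{\Omega^\eps\cap B_k}|\na w^\eps|^2$ by those on neighbouring slabs plus the source, summed with exponential weights as in \cite{DGV:2008}, yields $\|w^\eps\|_{H^1_{uloc}(\Omega^\eps)}\le C\phi$ for $\phi$ small, and a contraction argument then gives existence and uniqueness. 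Two points need care: the boundary term generated on $\Gamma^\eps$ by the Navier condition is $\tfrac1{\lambda^\eps}\int_{\Gamma^\eps}|w^\eps_{\mathrm{tan}}|^2\ge 0$ (read as $0$, with $w^\eps_{\mathrm{tan}}=0$, when $\lambda^\eps=0$, and it simply vanishes when $\lambda^\eps=+\infty$); and since that condition controls only the tangential trace, one passes from $\|D(w^\eps)\|_{L^2}$ to $\|\na w^\eps\|_{L^2}$ by Korn's inequality, using that the only rigid motion vanishing on $\{x_2=1\}$ is zero, so that a uniform (in $\eps$) Korn inequality holds on $\Omega^\eps$ \emph{irrespective} of the profile.

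\textbf{The $H^1$ estimate.} Extend $u^0$ to $\Omega^\eps$ by the same polynomial. The decisive step is that $u^\eps$ is small inside the roughness: rescaling (H') from $R$ to $R^\eps=\eps R$ gives $\|v\|_{L^2(R^\eps)}\le{\cal C}\,\eps\,\|\na v\|_{L^2(R^\eps)}$ for every $v$ with $v\cdot\nu|_{\Gamma^\eps}=0$ and compact support in $x_1$ (when $\lambda^\eps=0$ this is the elementary thin-layer Poincar\'e inequality, needing no hypothesis --- which is why that case is unconditional). Applying it to $\chi_k u^\eps$ with $\chi_k$ a cutoff in $x_1$, absorbing the commutator $\|\na\chi_k\,u^\eps\|_{L^2(R^\eps)}$ for $\eps$ small, and invoking the a priori bound, I obtain $\|u^\eps\|_{L^2_{uloc}(R^\eps)}\le C\eps\phi$; a trace inequality on the layer of thickness $\sim\eps$ then gives $\|u^\eps|_\Sigma\|_{L^2_{uloc}(\Sigma)}\le C\sqrt\eps\,\phi$. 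Now $v^\eps:=u^\eps-u^0$ on $\Omega$ is divergence-free with zero flux, vanishes on $\{x_2=1\}$, solves a linear Oseen system with $O(\phi)$ coefficients, and has boundary datum $v^\eps|_\Sigma$ of size $\sqrt\eps\,\phi$. Lifting this datum by a field supported in $\Omega\cap\{0<x_2<\eps\}$ and running the Saint-Venant energy estimate once more (the $O(\phi)$ terms being absorbed) gives $\|v^\eps\|_{H^1_{uloc}(\Omega)}\le C\sqrt\eps\,\phi$; on $R^\eps$, $\|u^\eps-u^0\|_{H^1(R^\eps\cap B_k)}$ is bounded directly from $\|u^\eps\|_{L^2_{uloc}(R^\eps)}\le C\eps\phi$, $\|\na u^\eps\|_{L^2_{uloc}(\Omega^\eps)}\le C\phi$ and the explicit form of $u^0$.

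\textbf{The $L^2$ estimate.} One cannot simply integrate the $H^1$ bound, since the dominant part of $v^\eps$ is an oscillating boundary layer of amplitude $\eps$ localized in a strip of width $\eps$ near $\Sigma$ --- which is $O(\sqrt\eps)$ in $H^1$ but only $O(\eps^{3/2})$ in $L^2$ --- together with the non-oscillating apparent-slip part, genuinely $O(\eps)$. I would recover the $O(\eps)$ bound by duality: given $\psi$ with $\|\psi\|_{L^2_{uloc}(\Omega)}\le 1$, solve the adjoint Oseen problem with right-hand side $\psi$ to get $\Psi\in H^2_{uloc}(\Omega)$ with $\|\Psi\|_{H^2_{uloc}}\le C$; then $\int_\Omega v^\eps\cdot\psi$ reduces, after integration by parts, to integrals over $\Sigma$ of $u^\eps|_\Sigma$ (and of $D(u^\eps)\nu|_\Sigma$, eliminated via the equation) tested against the \emph{smooth} traces of $\Psi$ and $\na\Psi$; splitting $u^\eps|_\Sigma$ into its period-average ($O(\eps)$) and a mean-zero oscillation (which gains a factor $\eps$ when tested against a $C^1$ function) gives $|\int_\Omega v^\eps\cdot\psi|\le C\eps\phi$, hence the claim on $\Omega$, while on $R^\eps$ the bound is already contained in $\|u^\eps\|_{L^2_{uloc}(R^\eps)}\le C\eps\phi$. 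Alternatively, one may add the order-$\eps$ boundary-layer corrector $\eps\,u^{bl}(\cdot/\eps)$, estimate the remainder in $H^1_{uloc}$ by the energy method, and use $\|\eps\,u^{bl}(\cdot/\eps)\|_{L^2_{uloc}(\Omega)}=O(\eps)$; this is the route of \cite{Basson:2007,GerMas} and it also prepares the proof of the Navier wall law.

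\textbf{Main obstacle.} The crux is the functional-analytic input: making the energy method close in $H^1_{uloc}$ in the presence of the Navier condition --- that is, establishing the uniform Korn inequality on $\Omega^\eps$ and, above all, the rescaled Poincar\'e inequality in the roughness, which is exactly where (H'), and hence the genuine non-degeneracy of $\omega$ (for a flat boundary $u^\eps$ need \emph{not} be small in the layer), enters. Granting these (they are developed in the next section), what remains is the by-now-standard Saint-Venant/$uloc$ energy machinery together with the oscillation/duality argument for the $L^2$ gain.
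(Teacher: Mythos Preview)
Your overall strategy---Saint-Venant energy estimates \`a la Lady\v{z}enskaja--Solonnikov, Korn's inequality to pass from $D(u)$ to $\nabla u$, the rescaled Poincar\'e/(H') inequality in $R^\eps$, and a duality argument for the $L^2$ gain---is the paper's strategy. There is, however, one structural difference that creates a genuine gap in your $L^2$ step.

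The paper does \emph{not} separate well-posedness from the error estimate. It runs the Saint-Venant argument once, directly on $v:=u^\eps-\tilde u^0$ over the whole rough channel $\Omega^\eps$, where $\tilde u^0:=1_\Omega u^0$ is the Poiseuille flow extended by \emph{zero} in $R^\eps$. The source terms this generates (a bulk term $12\phi\,1_{R^\eps}$ and a jump $6\phi$ across $\Sigma$) are handled via the rescaled (H') inequality $\|v\|_{L^2(R^\eps)}\le C\eps\|\nabla v\|_{L^2(R^\eps)}$ and the trace bound $\|v\|_{L^2(\Sigma)}\le C\sqrt{\eps}\|\nabla v\|_{L^2(R^\eps)}$, and the induction yields $\|v\|_{H^1_{uloc}(\Omega^\eps)}\le C\phi\sqrt{\eps}$ in one stroke. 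Crucially, since $\tilde u^0=0$ on $R^\eps$, this gives $\|\nabla u^\eps\|_{L^2_{uloc}(R^\eps)}\le C\phi\sqrt{\eps}$, and then the trace inequality yields $\|u^\eps|_\Sigma\|_{L^2_{uloc}}\le C\sqrt{\eps}\cdot\phi\sqrt{\eps}=C\phi\eps$. This $O(\eps)$ trace bound is the only input to the duality argument for the $L^2_{uloc}(\Omega)$ estimate; no further structure is needed.

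Your two-pass scheme (well-posedness with a generic flux carrier, then a separate error estimate on $\Omega$) only produces $\|\nabla u^\eps\|_{L^2_{uloc}(R^\eps)}\le C\phi$ from the a priori bound, hence only $\|u^\eps|_\Sigma\|_{L^2_{uloc}}\le C\phi\sqrt{\eps}$. To close the $L^2$ estimate you then invoke a ``period-average plus mean-zero oscillation'' decomposition of $u^\eps|_\Sigma$. This is not available here: Theorem~\ref{Direstimates} is stated for an \emph{arbitrary} Lipschitz profile $\omega$, with no periodicity or ergodicity assumed, so there is no mechanism by which testing the trace against a $C^1$ function gains a factor $\eps$, and no reason the ``average'' should be $O(\eps)$ without essentially assuming the conclusion. (Your alternative---adding the boundary-layer corrector---would work, but it imports all of Section~\ref{sec3}, which the paper deliberately avoids for this theorem.) A related casualty is your $H^1$ bound on $R^\eps$: with only $\|\nabla u^\eps\|_{L^2_{uloc}(R^\eps)}\le C\phi$ you do not get $\|u^\eps-u^0\|_{H^1_{uloc}(R^\eps)}\le C\phi\sqrt{\eps}$.

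The fix is simple: run the Saint-Venant estimate on $u^\eps-1_\Omega u^0$ over $\Omega^\eps$, as the paper does. This simultaneously gives existence, the $H^1_{uloc}(\Omega^\eps)$ bound (including the $\sqrt{\eps}$ gradient control in $R^\eps$), and the $O(\eps)$ trace on $\Sigma$, after which your duality argument goes through without any oscillation considerations.
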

In short, the Dirichlet wall law provides a $O(\phi \, \eps)$ approximation of the
exact solution $u^\eps$ in
$L^2_{uloc}(\Omega)$, for any $\lambda^\eps \in [0,+\infty]$.  This gives a
quantitative estimate of the convergence results obtained in the
former papers. Note that the dependence of  the error
estimates on both  $\phi$ and $\eps$ is specified. In the case
$\lambda^\eps = 0$, this improves slightly the result of \cite{Basson:2007}, where
the $\phi$ dependence was neglected.  

\medskip
Our second result is  the existence of a better homogenized
condition. Here, as outlined in article \cite{GerMas}, some ergodicity
property of the rugosity is needed.  We shall assume that $\omega$ is
a random stationary process. Moreover, we shall need a
slight reinforcement of (H'), namely: 
\begin{center}
{\em (H'') There exists ${\cal C} > 0$, such that  for all 2-D fields $u \in
  C^\infty_c\left(\overline{R}\right)$ satisfying $u\cdot \nu\vert_{\Gamma} =
  0$,
$$     \| u \|_{L^2(R)} \: \le \:  {\cal C}\, \| D(u) \|_{L^2(R)} ,
\quad D(u) = \frac{1}{2} \left( \na u + (\na u)^t \right).   $$
}
\end{center}
We shall discuss this assumption in section \ref{sec1}. We state
\begin{theorem} \label{Navestimates2}
Let $\omega$ be an ergodic stationary random process,  with
values in $(-1,0)$ and  $K$-Lipschitz almost surely, for some $K
>0$. Assume  either that $\lambda^\eps = 0$, or that $\lambda^\eps=\lambda^0>0$ for all $\eps$, and the
non-degeneracy condition (H'') holds almost surely, with a uniform
${\cal C}$.  Then there exists $\alpha > 0$   and $\phi_0 > 0$  such
that, for all  $|\phi| < \phi_0$, $\: \eps \le 1$, the solution 
$u^N$ of \eqref{NS}-\eqref{Navier} with $\lambda = \alpha \, \eps$
satisfies 
$$ \left( \sup_{R \ge 1} \frac{1}{R}  \int_{\Omega \cap  \{|x_1| < R\}} |
u^\eps - u^N |^2 \, dx \right)^{1/2} = o(\eps), \:\mbox{ almost surely.}  $$   
\end{theorem}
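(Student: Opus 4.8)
The plan is to construct an explicit approximation $\ueapp$ of $u^\eps$, of the form
$$ \ueapp(x) \: = \: u^N(x) \: + \: \eps \, V\!\left(\frac{x}{\eps}\right) \beta(x_2) \: + \: \text{(pressure/corrector terms)}, $$
where $u^N$ is the Navier-Poiseuille flow from the statement, $V$ is a boundary layer profile, and $\beta$ a smooth cut-off localizing the correction near $\Sigma$. The key object is the boundary layer $V$, defined as the solution of a Stokes system posed in the infinite rough half-plane $\Omega_{bl} := \{y_2 > \omega(y_1)\} $, with the slip condition (Na) with $\lambda = \lambda^0$ on $\Gamma$, forced so as to cancel the trace left on $\Sigma$ by $u^N$ (essentially a unit horizontal shear $(y_2,0)$ at infinity), and with zero flux. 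This is the analogue of the construction in \cite{GerMas,DGV:2008} for the Dirichlet case, now with a Robin-type condition at the bottom. First I would establish well-posedness of this boundary layer system in $H^1_{uloc}(\Omega_{bl})$; this is exactly where the Saint-Venant / Korn-type estimate is needed, and where (H'') enters: the inequality $\|u\|_{L^2(R)} \le \mathcal C \|D(u)\|_{L^2(R)}$ provides the coercivity of the bilinear form $\int D(u):D(v)$ on divergence-free fields vanishing in the $\nu$-direction on $\Gamma$, which is the natural energy space once the boundary condition involves $D(u)\nu$ rather than $\nabla u \, \nu$. (For $\lambda^\eps = 0$ one falls back on (H') and the earlier Dirichlet theory, so that case is essentially covered by Theorem \ref{Direstimates} combined with \cite{GerMas}.)

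Next, the ergodicity step: by the stationarity of $\omega$ and an ergodic theorem applied to the boundary layer corrector, the averaged tangential velocity that $V$ carries up to $\Sigma$ converges, as one looks far from the boundary, to a deterministic constant — this constant is precisely $\alpha$, and it fixes the slip length $\lambda = \alpha\eps$ in (Na). Concretely, one shows $V(y) \to (\alpha,0)$ in an averaged ($L^2_{uloc}$, Cesàro) sense as $y_2 \to +\infty$, almost surely, with the convergence being $o(1)$ but without a rate — this lack of rate is why the final estimate is $o(\eps)$ rather than $O(\eps^{3/2})$ or similar. I would lean on the ergodic-theoretic machinery already developed in \cite{GerMas} for the Dirichlet problem, checking that it transfers to the slip setting (the corrector equation changes, but the stationarity structure and the relevant a priori bounds do not).

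Then comes the energy estimate for the remainder $r^\eps := u^\eps - \ueapp$. One plugs $\ueapp$ into \eqref{NSeps}, computes the residual, and checks that it is small in the appropriate dual norm: the bulk residual is $O(\eps)$ times controlled quantities, the boundary residual on $\Gamma^\eps$ vanishes by construction of $V$, and the residual on $\Sigma$ — the mismatch between the Navier condition with slip length $\alpha\eps$ and the true averaged behaviour of the boundary layer — is $o(\eps)$ by the ergodic step. A $H^1_{uloc}$ energy estimate for $r^\eps$, using (H'')/(H') to get Korn coercivity up to the rough boundary and absorbing the convective term by the smallness of $\phi$ (as in the uniqueness part of Theorem \ref{Direstimates}), then yields $\|r^\eps\|_{L^2_{uloc}(\Omega)} = o(\eps)$; since $\|\ueapp - u^N\|_{L^2_{uloc}(\Omega)} = O(\eps^{3/2})$ or smaller (one $\eps$ from the prefactor, half a power from the $L^2_{uloc}$ integration of the oscillatory profile, plus the cut-off), the triangle inequality gives the claim. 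The main obstacle, as indicated, is the boundary layer analysis under the slip condition: proving well-posedness and the far-field (ergodic) convergence of $V$ in $H^1_{uloc}$ with only the Korn-type non-degeneracy (H''), rather than the Poincaré-type (H'), since the Robin boundary term $\lambda^0 (I_d - \nu\otimes\nu)D(V)\nu$ must be handled in the weak formulation and does not control $\nabla V$ directly — this is precisely the reason (H'') is introduced and the technically delicate heart of the argument.
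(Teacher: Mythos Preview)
Your overall scheme---boundary layer expansion plus an energy estimate on the remainder---matches the paper's, but there are two substantive gaps and one incorrect claim.

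First, the technical core of the paper's argument is a new almost-sure estimate (Proposition~\ref{prop:as_est}):
\[
\sup_{R\geq 1}\frac{1}{R^{1/2}}\bigl\| v(\cdot/\eps) - (\alpha,0) \bigr\|_{L^2(\Omega_R)}=o(1)\quad\text{a.s.},
\]
proved by combining Egorov's theorem with Birkhoff's ergodic theorem (an idea attributed to Souganidis). You say you would ``lean on the ergodic-theoretic machinery already developed in \cite{GerMas}'', but \cite{GerMas} does not treat the random case, and the random analysis in \cite{Basson:2007} yields only estimates in expectation. Neither gives the uniform-in-$R$ \emph{almost sure} bound above; the paper explicitly remarks that the probabilistic construction of \cite{Basson:2007} would not suffice here, and that this step is the main novelty of the section. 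Second, the paper requires a \emph{second} boundary layer corrector $v^1$ at order $\eps^2$, solving an auxiliary Stokes problem driven by the residual tangential trace of $u^0+6\phi\eps v$ on $\Gamma$. Without it the boundary remainder $g^\eps$ on $\Gamma^\eps$ is only $O(\eps)$, and after the energy estimate one obtains $O(\eps)$ rather than $o(\eps)$; with $v^1$ one gets $g^\eps=O(\eps^2)$. Your single corrector $V$ does not address this.

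Finally, your claim that $\|\ueapp - u^N\|_{L^2_{uloc}} = O(\eps^{3/2})$ from ``half a power from the $L^2_{uloc}$ integration of the oscillatory profile'' is wrong: the boundary layer is not mean-zero oscillatory, it tends to the nonzero constant $(\alpha,0)$. The smallness of $\ueapp - u^N$ in the relevant norm comes entirely from the ergodic $o(1)$ above, with no quantitative rate---this is consistent with (and explains) the $o(\eps)$ rather than any power improvement in the theorem. As a minor structural point, the paper builds $\ueapp$ around the Poiseuille flow $u^0$ with explicit correctors $u^1$ and $r^\eps$ (no cut-off $\beta$) to repair the upper boundary condition and the flux, and compares to $u^N$ only at the very end via an explicit formula.
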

We quote that the norm above is common in the framework of stochastic
pde's: see for instance  \cite{Basson}. We also quote that, even in the case $\lambda^\eps = 0$,
this almost sure estimate is new:  the estimates of \cite{Basson:2007}
involved expectations.   
This result can also be extended to other slip lengths $\lambda^\eps$ in \eqref{NSeps}; more precisely, up to a few minor modifications, our techniques also allow us to treat slip lengths $\lambda^\eps$ such that $\lambda^\eps \gg 1$, or $\lambda^\eps\lesssim \eps^2$, or $\lambda^\eps=\lambda^0\eps$.

\medskip
Briefly, the outline of the paper is as follows. In section  \ref{sec1}, we
will discuss in details the hypotheses (H') and (H''). Section \ref{sec2} will
be devoted to the proof of theorem \ref{Direstimates}. In section \ref{sec3}, we will
analyze the boundary layer near the rough
boundary. This will allow for  the proof
of Theorem  \ref{Navestimates2}, to be achieved
in section  \ref{sec4}.

\section{The non-degeneracy assumption} \label{sec1}

The goal of this section is to discuss hypotheses (H') and (H''), and,
in particular, to give sufficient conditions on the function $\omega$
for (H') and (H'') to hold. We will also discuss the optimality of
these conditions in the periodic, quasi-periodic and stationary
ergodic settings, and compare them to assumption (H). 

\subsection{Poincar\'e inequalities for rough domains: assumption (H')}

First, let us recall that if the non-penetration condition $u\cdot \nu
\vert_\Gamma=0$ is replaced by a no-slip condition $u\vert_\Gamma=0$,
then the Poincar\'e inequality holds: indeed, for all $u\in H^1(R)$
such that $u\vert_\Gamma=0$, we have 
\begin{eqnarray*}
\int_R  | u(y_1,y_2)|^2 dy_1 \:dy_2&=& \int_R \left|
\int_{\omega(y_1)}^{y_2}\pa_2 u (y_1,t)dt\right|^2 dy_1\: dy_2\\ 
&\leq & C \int_R |\pa_2 u(y_1,t)|^2 dy_1\: dt,
\end{eqnarray*}
where the constant $C$ depends only on $\|\omega\|_{L^\infty}$. 

Assumption (H') requires that the same inequality holds under the mere
non-penetration condition; of course, such an inequality is false in
general (we give a counter-example below in the case of a flat
bottom). In fact, (H') is strongly related to the roughness of the
boundary: if the function $\omega$ is not constant, then the inward normal
vector $\nu=(1+  {\omega'}^2)^{-1/2}(-\omega', 1)$ takes different
values. Since $u\cdot \nu \vert_\Gamma=0$, we have a control of $u$ in
several directions at the boundary (at different points of
$\Gamma$). \textit{In fine}, this allows us to prove that the
Poincar\'e inequality holds, and the arguments are in fact close to
the calculations of the Dirichlet case recalled above. 

$\bullet$ We now derive a \textbf{sufficient condition for (H'):}

\begin{lemma}
Let $\omega\in W^{1, \infty}(\R)$ with values in $(-1,0)$ and such
that $\sup \omega<0$. Assume that 
\be\label{hyp:nondegenerate}
\exists A>0,\ \inf_{y_1\in\R} \int_{0}^{A} |\omega'(y_1+t)|^2 dt >0.
\ee

Then assumption (H') is satisfied.

\label{lem:Poincare}
\end{lemma}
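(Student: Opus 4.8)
The plan is to turn the pointwise constraint $u\cdot\nu|_\Gamma=0$, which only lives on the rough boundary $\Gamma$, into a relation valid \emph{throughout} $R$, at the price of an error term controlled by $\na u$; once this is done, a one-dimensional Poincar\'e--Wirtinger argument in the horizontal variable, fed by the non-degeneracy \eqref{hyp:nondegenerate}, should conclude. Throughout I write $u=(u_1,u_2)$, set $b:=-\sup\omega>0$ so that the flat sub-strip $R_0:=\R\times(-b,0)$ is contained in $R$, and put $R_1:=R\setminus\overline{R_0}$.

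First I would record the elementary identity obtained by integrating $\pa_2 u_i$ along vertical segments: for a.e.\ $y_1$ and every admissible $y_2$, $u_i(y_1,y_2)=u_i(y_1,\omega(y_1))+\int_{\omega(y_1)}^{y_2}\pa_2 u_i(y_1,t)\,dt$. Combining the cases $i=1,2$ with the boundary relation $u_2(y_1,\omega(y_1))=\omega'(y_1)u_1(y_1,\omega(y_1))$ eliminates the boundary trace and produces
$$ \omega'(y_1)u_1(y_1,y_2)-u_2(y_1,y_2)=\omega'(y_1)\!\int_{\omega(y_1)}^{y_2}\!\!\pa_2 u_1(y_1,t)\,dt-\int_{\omega(y_1)}^{y_2}\!\!\pa_2 u_2(y_1,t)\,dt=:E(y_1,y_2), $$
an identity now valid at \emph{every} point of $R$. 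Since $|y_2-\omega(y_1)|\le1$ and $\omega\in W^{1,\infty}$, Cauchy--Schwarz gives $\|E\|_{L^2(R)}\le C\|\pa_2 u\|_{L^2(R)}$ with $C=C(\|\omega'\|_{L^\infty})$. The same vertical-integration estimate, applied between the horizontal line $y_2=-b$ and the two pieces $R_0,R_1$, reduces the problem to the flat strip: $\|u\|_{L^2(R)}^2\le C(b)\bigl(\|u\|_{L^2(R_0)}^2+\|\pa_2 u\|_{L^2(R)}^2\bigr)$.

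The heart of the proof is then the estimate $\|u_1\|_{L^2(R_0)}\le C\|\na u\|_{L^2(R)}$. I would fix an integer $m$ (to be chosen), set $L:=mA$ and partition $\R$ into the intervals $I_k:=[kL,(k+1)L)$. On each $I_k$, and for each fixed $y_2\in(-b,0)$, I write $u_i(\cdot,y_2)=\bar w_i+r_i$ with $\bar w_i:=\tfrac1L\int_{I_k}u_i(s,y_2)\,ds$ the horizontal mean, so that $\int_{I_k}r_i=0$ and, by Poincar\'e--Wirtinger, $\|r_i\|_{L^2(I_k)}\le \tfrac{L}{\pi}\|\pa_1 u_i(\cdot,y_2)\|_{L^2(I_k)}$. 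Substituting into the interior identity above and integrating it over $I_k$ both against $1$ and against $\omega'$ yields two scalar relations linking $\bar w_1$ and $\bar w_2$; their coefficients are $\int_{I_k}\omega'$ (bounded by $2$ because $|\omega|<1$), $L$, and $\int_{I_k}|\omega'|^2$, the last being $\ge m c_0$ with $c_0:=\inf_{y_1}\int_0^A|\omega'(y_1+t)|^2\,dt>0$ --- here I simply sum \eqref{hyp:nondegenerate} over the $m$ length-$A$ sub-intervals of $I_k$. Solving this $2\times2$ system for $\bar w_1$, the only potentially large contribution is the one carrying the factor $\int_{I_k}\omega'$, and it appears divided by (a multiple of) $m c_0 L$; choosing $m$ large, depending only on $A$ and $c_0$, makes the system uniformly invertible and gives $|\bar w_1|\le C\bigl(\|\pa_1 u(\cdot,y_2)\|_{L^2(I_k)}+\|E(\cdot,y_2)\|_{L^2(I_k)}\bigr)$. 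Hence $\|u_1(\cdot,y_2)\|_{L^2(I_k)}^2\le 2L|\bar w_1|^2+2\|r_1\|_{L^2(I_k)}^2\le C\bigl(\|\pa_1 u(\cdot,y_2)\|_{L^2(I_k)}^2+\|E(\cdot,y_2)\|_{L^2(I_k)}^2\bigr)$; integrating over $y_2\in(-b,0)$ and summing the disjoint intervals $I_k$ gives $\|u_1\|_{L^2(R_0)}^2\le C\bigl(\|\na u\|_{L^2(R)}^2+\|E\|_{L^2(R)}^2\bigr)\le C\|\na u\|_{L^2(R)}^2$.

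To finish, I would use the interior identity once more, this time on $R_0$, to control the second component: $u_2=\omega' u_1-E$, so $\|u_2\|_{L^2(R_0)}\le\|\omega'\|_{L^\infty}\|u_1\|_{L^2(R_0)}+\|E\|_{L^2(R_0)}\le C\|\na u\|_{L^2(R)}$. Combined with the reduction of the second step this yields $\|u\|_{L^2(R)}\le\mathcal C\|\na u\|_{L^2(R)}$ for all $u\in C^\infty_c(\overline R)$ with $u\cdot\nu|_\Gamma=0$, which is exactly (H'). The main obstacle is precisely the invertibility of the $2\times2$ system in the third step: this is the only place where \eqref{hyp:nondegenerate}, as opposed to a pointwise lower bound on $|\omega'|$, is genuinely needed, and it forces one to average the interior identity over an interval $L=mA$ long enough that the net slope $\int_{I_k}\omega'$ of $\omega$ becomes negligible against its accumulated ``roughness energy'' $\int_{I_k}|\omega'|^2\gtrsim m c_0$.
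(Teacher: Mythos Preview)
Your argument is correct and is a genuinely different route from the paper's. Both proofs exploit the same mechanism --- the non-degeneracy \eqref{hyp:nondegenerate} guarantees that the normal $\nu$ points in ``enough'' directions along $\Gamma$ so that the single scalar constraint $u\cdot\nu=0$ yields control on the full vector $u$ --- but they implement it differently. The paper introduces the averaged quantity $\int_0^B|u(y)\cdot\nu(y_1+t)|^2\,dt$ and proves two chained inequalities: first, expanding the square shows this quantity dominates $|u(y)|^2$ once $B$ is large (this is where \eqref{hyp:nondegenerate} enters, via $\int_0^B\omega'(y_1+t)^2\,dt\ge\lfloor B/A\rfloor c_0$ versus the bounded cross term); second, a piecewise-linear path joining $y$ to the shifted boundary point $(y_1+t,\omega(y_1+t))$ --- up to a fixed horizontal level, across, then down --- shows $u(y)\cdot\nu(y_1+t)$ is controlled by $\nabla u$ along that path, since $u\cdot\nu$ vanishes at the endpoint. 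Your approach instead drops straight down to extract the pointwise identity $\omega' u_1-u_2=E$, then recovers the ``many directions'' effect by testing this identity against $1$ and $\omega'$ over long horizontal intervals and inverting a $2\times2$ system. The paper's argument is more geometric and avoids the linear algebra; yours makes the role of \eqref{hyp:nondegenerate} slightly more transparent (it is exactly the uniform lower bound on the determinant $L\!\int_{I_k}|\omega'|^2-(\int_{I_k}\omega')^2$) and stays entirely one-dimensional after the initial reduction.
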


\begin{proof}
 The idea is to prove that for some well-chosen number $B>0$, there holds
\begin{eqnarray}
\int_R |u(y)|^2 dy &\leq &C_B \int_R \int_0^B 	\left| u(y_1,
y_2)\cdot \nu(y_1+ t)\right|^2\:dt \:dy_1\:dy_2 \label{in:ndg1}\\ 
&\leq & C_B \int_R |\nabla u(y)|^2\:dy.\label{in:ndg2}
\end{eqnarray}
The first inequality is a direct consequence of assumption
\eqref{hyp:nondegenerate}. The proof of the second one follows
arguments from \cite{Simon:2003}, and is in fact close to the proof of the
Poincar\'e inequality in the Dirichlet case. 

First, for all $B>0$, we have
\begin{eqnarray*}
&&\int_R \int_0^B 	\left| u(y_1, y_2)\cdot \nu (y_1+
  t)\right|^2\:dt \:dy_1\:dy_2\\ 
 &=&	\int_R \int_0^B \frac{1}{1+ {\omega'}^2(y_1+t)}	\left( -u_1(y)
  \omega'(y_1+t) +u_2(y)\right)^2\:dt \:dy_1\:dy_2\\ 
&\geq & \frac{1}{1+ \|\omega'\|_\infty^2}\left[\int_R
    dy\:u_1^2(y)\int_0^B\omega'(y_1+t)^2 dt+ B\int_R u_2^2\right.\\ 
&&\qquad\qquad \qquad \left. - 2\int_R u_1(y) u_2(y)(\omega(y_1+ B) -
    \omega(y_1))\:dy\right]\\  
&\geq &  \frac{1}{1+ \|\omega'\|_\infty^2} \inf\left( B-
  \|\omega\|_\infty, \inf_{y_1\in \R} \int_0^B\omega'(y_1+t)^2 dt -
  \|\omega\|_\infty \right) \int_R |u(y)|^2 \:dy. 
\end{eqnarray*} 
Assume that $B>A$, and set 
$$
\alpha:=\inf_{y_1\in\R} \int_{0}^{A} |\omega'(y_1+t)|^2 dt.
$$
Notice that $\alpha>0$ thanks to \eqref{hyp:nondegenerate}. Then 
$$
\inf_{y_1\in \R} \int_0^B\omega'(y_1+t)^2 dt\geq
\left\lfloor\frac{B}{A}\right\rfloor \alpha,  
$$
and thus there exists a positive constant $c$ such that for all $B>A$,
$$
\int_R \int_0^B 	\left| u(y_1, y_2)\cdot \nu (y_1+
t)\right|^2\:dt \:dy_1\:dy_2\geq c (B-1)\int_R |u(y)|^2 \:dy. 
$$ 
Thus for $B$ large enough, inequality \eqref{in:ndg1} is satisfied.

As for \eqref{in:ndg2}, let us now prove that for all $B>0$, there
exists a constant $C_B$ such that 
$$
\int_R \int_0^B \left| u(y_1, y_2)\cdot \nu (y_1+ t)\right|^2\:dt
\:dy_1\:dy_2 \leq C_B\int_R |\nabla u(y)|^2\:dy. 
$$
We use the same kind of calculations as in \cite{Simon:2003}. The idea is the
following: for all $y\in R$, $t\in[0,B]$, let 
$$
z=(y_1+ t, \omega(y_1+t))\in \Gamma.
$$
Let $\ell_{y,t}$ be a path in $ W^{1,\infty}([0,1], \R^2)$ such that
$\ell_{y,t}(0)=y,$ $\ell_{y,t}(1)=z$ and $\ell_{y,t}(\tau)\in R$ for
all $\tau\in(0,1)$. Then 
$$ 
u(y)-u(z)=\int_0^1 \left(\ell_{y,t}'(\tau)\cdot \nabla\right)
u(\ell_{y,t}(\tau))\:d\tau, 
$$
and thus, since $u(z)\cdot \nu(y_1+t)=0$,
$$
\left|u(y)\cdot \nu(y_1 + t)  \right|\leq
\int_0^1\left|\left(\ell_{y,t}'(\tau)\cdot \nabla\right)
u(\ell_{y,t}(\tau)) \right|\:d\tau\:dt\:dy. 
$$
There remains to choose a particular path $\ell_{y,t}$.

Notice that in general, we cannot choose for  $\ell_{y,t}$ the
straight line joining $y$ and $z$, since the latter may cross the
boundary $\Gamma$. We thus make the following choice: for
$\lambda\in(\sup \omega, 0)$, we set 
$$
\begin{aligned}
z_\lambda':=(y_1, \lambda),\\
z_\lambda'':=(y_1+t, \lambda).	
\end{aligned}
$$
We define the path $\ell_{y,t}$ by 
$$
\ell_{y,t}(0)=y,\quad\ell_{y,t}\left(\frac{1}{3}
\right)=z_\lambda',\quad\ell_{y,t}\left(\frac{2}{3}
\right)=z_\lambda'', \quad\ell_{y,t}\left(1 \right)=z, 
$$  
and $\ell_{y,t}$ is a straight line on each segment $[0,1/3]$, $[1/3,
  2/3]$, $[2/3, 1]$ (see Figure \ref{fig:chemin}).

\begin{figure} 
\begin{center}
\includegraphics[height = 6.5cm, width=10.5cm]{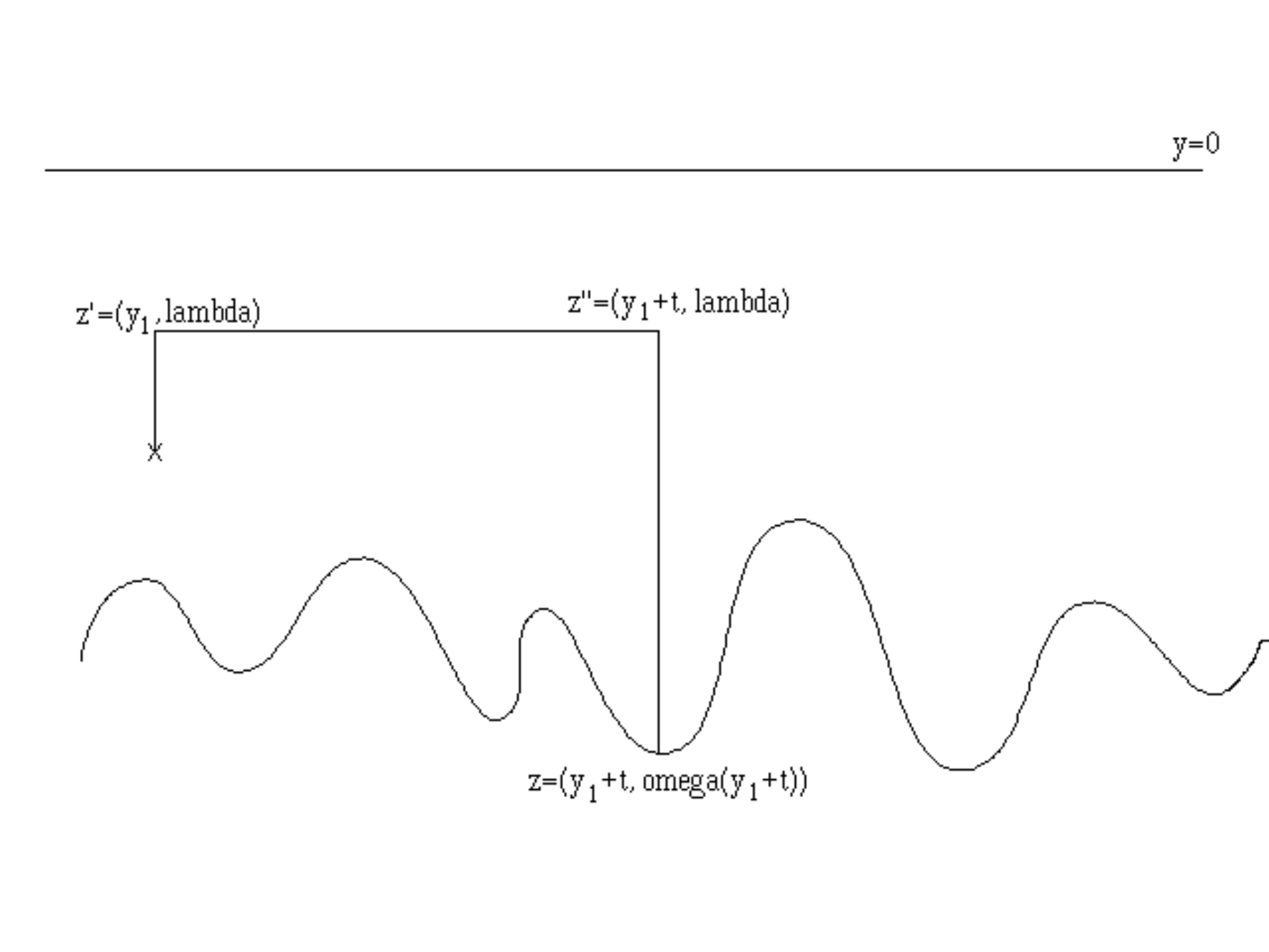}
\end{center}
\caption{The path $\ell_{y,t}$.}\label{fig:chemin}
\end{figure}

Notice that
$\ell_{y,t}$ depends in fact on $\lambda$, although the dependance is
omitted in order not to burden the notation. With this choice, we have 
\begin{eqnarray*}
\left|u(y)\cdot \nu (y_1 + t)  \right|&\leq&
\int_{[y_2,\lambda]} |\pa_2u|(y_1, y_2')\:dy_2'\\ 
&&+ \int_0^t |\pa_1 u|(y_1+y_1', \lambda)\:dy_1'\\
&&+\int_{\omega(y_1+t)}^\lambda |\pa_2 u|(y_1+t, y_2')\:dy_2'	\\
&\leq & \int_{\omega(y_1)}^0|\pa_2u|(y_1, y_2')\:dy_2' + \int_0^B
|\pa_1 u|(y_1+ y_1', \lambda)\:dy_1' \\&&+
\int_{\omega(y_1+t)}^0|\pa_2u|(y_1+t, y_2')\:dy_2'. 
\end{eqnarray*}
Integrating with respect to $y$ and $t$, we obtain, for all
$\lambda\in(\sup \omega, 0)$ 
$$
\int_{y\in R}\int_0^B\left|u(y)\cdot \nu(y_1 + t)  \right|^2\:dy\:dt
\leq  C_B\left( \int_R |\pa_2 u|^2(y)\:dy + \int_{\R}|\pa_1 u|^2(y_1,
\lambda)\:dy_1\right). 
$$ 
Integrating once again with respect to $\lambda$ yields the desired inequality.
\end{proof}

$\bullet$ Let us now examine in which case assumption
\eqref{hyp:nondegenerate} is satisfied in the periodic, quasi-periodic
and stationary ergodic settings: first, if $\omega$ is $\T$-periodic,
where $\T:=\R/\Z$, then \eqref{hyp:nondegenerate} merely amounts to 
$$ 
\int_{\T}{\omega'}^2>0.
$$
Hence (H') holds as soon as the lower boundary is not flat. In this
case assumption \eqref{hyp:nondegenerate} is necessary, as shows the
following example: assume that  
$\omega\equiv -1$, and consider the sequence $(u_k)_{k\geq 1}$ in
$H^1(R)$ defined by $u_{k,2}\equiv 0$ and 
$$
u_{k,1}(y)=\left\{ \begin{array}{ll}
                   	1&\text{ if }|y_1|\leq k,\\
			0&\text{ if }|y_1|\geq k+1,
                   \end{array}
\right.
$$
and $u_{k,1}(y)\in[0,1]$ for all $y$, $\|\nabla u_{k,1}\|_{L^\infty}\leq 2.$

Then it is easily checked that $u_k\cdot \nu\vert_\Gamma=0,$ and that 
$$
\|u_k\|_{L^2(R)}\geq 2 k.
$$
On the other hand,
$$
\|\nabla u_k\|_{L^2(R)}^2=\int_{k\leq y_1\leq k+1}|\nabla u_k|^2 \leq
8\quad\forall k\geq 1. 
$$
Hence assumption (H') cannot hold in $R$.

In the quasi-periodic case, the situation is similar to the one of the
periodic case, i.e. 
$$
\eqref{hyp:nondegenerate}\iff \omega'\neq 0.
$$
Indeed, assume that 
$$
\omega(y_1)=F(\lambda y_1)
$$
for some $\lambda\in\R^d$, $F\in\mathcal C^2(\T^d)$, with $d\geq 2$
arbitrary. Then  
$$
\int_0^A {\omega'}^2(y_1+t)\: dt = \int_0^A (\lambda\cdot \nabla
F)^2(\lambda(y_1+t))\:dt. 
$$
Write $F$ as a Fourier series:
$$
F(Y)=\sum_{k\in\Z^d}a_k e^{2i\pi k\cdot Y}\quad\forall Y\in\T^d.
$$
Then
\begin{eqnarray*}
\int_0^A (\lambda\cdot \nabla F)^2(\lambda(y_1+t))\:dt&=&-2\pi
\sum_{\substack{k,l\in\Z^d,\\\lambda\cdot(k+l)\neq 0}} a_ka_l \:(
\lambda\cdot k)\:	(\lambda\cdot l) e^{2i\pi(k+l)\cdot \lambda
  y_1}\frac{e^{2i\pi (k+l)\cdot \lambda A} -1}{i(k+l)\cdot \lambda}\\ 
&&+4\pi^2 A \sum_{\substack{k,l\in\Z^d,\\\lambda\cdot(k+l)= 0}}
a_ka_l(\lambda\cdot k)^2. 
\end{eqnarray*}The first term is bounded uniformly in $y_1$ and $ A$ provided the
sequence $a_k$ is sufficiently convergent and $\lambda$ satisfies a diophantine condition.
Consequently, setting $$C_0=4\pi^2\sum_{\substack{k,l\in\Z^d,\\\lambda\cdot(k+l)= 0}}
a_ka_l(\lambda\cdot k)^2,$$
we deduce that there exists a constant $C$ such that
$$
\forall A>0,\ \forall y_1\in\R,\quad C_0A-C\leq \int_0^A {\omega'}^2(y_1+t)\: dt\leq C_0A+C.
$$
The above inequality entails that $C_0\geq 0$. If $C_0>0,$ inequality \eqref{hyp:nondegenerate} is proved. If $C_0=0$, we infer that
$$
\int_\R{\omega'}^2<\infty.
$$
As a consequence, since $\omega'$ is uniformly continuous on $\R$, $\lim_{|t|\to\infty}\omega'(t)=0$. On the other hand, it can be proved thanks to classical arguments that for all $\eps>0, N>0$, there exists $n\in\N$ such that $n>N$ and
$$
d(\lambda n, \Z^d)\leq \eps.
$$
For $\eps$ small and $N$ large, and $t$ in a fixed and arbitrary bounded set, we obtain
\begin{eqnarray*}
\om'(t+n)&=&o(1)\\
&=&\lambda\cdot\na F(\lambda t + \lambda n)\\
&=&	\lambda\cdot\na F(\lambda t) + o(1)\\
&=&\om'(t)+ o(1).
\end{eqnarray*}
Thus $\om'(t)=0$, and $\om'\equiv 0$.

Hence we deduce that \eqref{hyp:nondegenerate} is satisfied as soon as
$\omega'$ is not identically zero, at least for ``generic'' quasi-periodic functions (i.e. such that the Fourier coefficients of the underlying periodic function are sufficiently convergent and such that $\lambda$ satisfies a diophantine condition). In fact, slightly more refined arguments (which we leave to the reader) show that the result remains true as long as 
$$
\sum_{k\in\Z^d} |k|\; |a_k| <\infty,
$$
without any assumption on $\lambda$.

\vskip1mm

Let us now give give another formulation of \eqref{hyp:nondegenerate}
in the stationary ergodic case. We denote by $(M,\mu)$ the underlying
probability space, and by $(\tau_{y_1})_{y_1\in\R}$ the
measure-preserving transformation group acting on $M$. We recall that
there exists a function $F\in L^\infty(M)$ such that 
$$
\omega(y_1,m)=F(\tau_{y_1} m),\quad y_1\in R,\ m\in M.
$$
As in \cite{Basson:2007}, we define the stochastic derivative of $F$ by
$$
\pa_m F(m):=\omega'(0, m)\quad\forall m\in M,
$$
so that $\omega'(y_1,m)=\pa_m F(\tau_{y_1}m)$ for $(y_1,m)\in \R\times
M.$ We claim that almost surely in $m\in M$, 
$$
\inf_{y_1\in \R}\int_0^A |\omega'(y_1+ t, m)|^2\:dt = \essinf_{m'\in
  M} \int_0^A |\pa_m F(\tau_t m')|^2 \:dt.  
$$
Indeed, notice that the left-hand side is invariant under the
transformation group $(\tau_{z_1})_{z_1\in\R}$ as a function of $m\in
M$. As a consequence, it is constant (almost surely) over $M$; we
denote by $\phi$ the value of the constant. Since $\omega'\in
L^\infty$, we also have 
$$
\phi=\inf_{y_1\in \Q}\int_0^A |\omega'(y_1+ t, m)|^2\:dt \quad \text{a.s. in }M.
$$
Now, for all $y_1\in \Q$, 
\begin{eqnarray*}
\int_0^A |\omega'(y_1+t, m)|^2\:dt&=&\int_0^A |\pa_m F(\tau_t (\tau_{y_1} m))\:dt\\
&\geq & \essinf	_{m'\in M} \int_0^A |\pa_m F(\tau_t m')|^2 \:dt=:\phi'
\end{eqnarray*}
almost surely in $M$. Taking the infimum over $y_1\in \Q,$ we infer that $\phi\geq \phi'.$

On the other hand, by definition of $\phi'$, for all $\eps>0$, there
exists $\mathcal M_\eps\subset M$ such that $P(\mathcal M_\eps)>0$ and 
$$
\phi'\leq \int_0^A |\pa_m F(\tau_t m)|^2 \:dt\leq \phi' +
\eps\quad\forall m\in \mathcal M_\eps. 
$$
Consequently, for all $m\in\mathcal M_\eps$, we have
$$
\inf_{y_1\in\R} \int_0^A |\omega'(y_1+t, m)|^2\:dt \leq  \int_0^A
|\omega'(t, m)|^2\:dt\leq \phi'+ \eps, 
$$
that is,
$$
\phi \leq \phi'+\eps.
$$
Hence $\phi=\phi'.$ Eventually, we deduce that in the stationary
ergodic case, assumption \eqref{hyp:nondegenerate} is equivalent to  
\be\label{hyp:ndg_ergodic}
\exists A>0,\ \essinf_{m\in M} \int_0^A |\pa_m F(\tau_t m)|^2 \:dt >0.
\ee
A straightforward application of the stationary ergodic theorem shows
that \eqref{hyp:ndg_ergodic} implies that 
$$
E[|\pa_m F|^2]>0.
$$
However, assumption \eqref{hyp:ndg_ergodic} appears to be much more
stringent than the latter condition: indeed, \eqref{hyp:ndg_ergodic}
is a uniform condition over the probability space $M$, whereas the convergence 
$$
\frac{1}{R}\int_0^R |\pa_m F(\tau_t m)|^2 \:dt
\underset{R\to\infty}{\longrightarrow} E[|\pa_m F|^2] 
$$
only holds pointwise.

\vskip1mm

$\bullet$ Let us now compare condition \eqref{hyp:nondegenerate} with
the assumption (H) of \cite{Bucur:2008}. In order to have a common
ground for the comparison, we assume that the setting is stationary
ergodic. In this case, the family of Young measures associated with
the sequence $\omega'(\cdot)/\eps$ can be easily identified: indeed,
according to the results of Bourgeat, Mikelic and Wright (see
\cite{Bourgeat:1994}), for all $G\in\mathcal C^1(\R)$ and for all test
function $\varphi \in L^1(\R\times M)$, there holds 
$$  
\int_{\R\times M} G\left(\omega'\left(\frac{y_1}{\eps},
m\right)\right) \varphi(y_1, m)\: d y_1 \: d\mu(m)\to \int_{\R\times
  M} E[G(\pa_m F)] \varphi(y_1, m)\: d y_1 \: d\mu(m). 
$$
By definition of the Young measure, the left-hand side also converges
(up to a subsequence) towards 
$$
\int_{\R\times M} \langle G, d\mu_{y_1}\rangle \varphi(y_1,m))\: d y_1 \: d\mu(m).
$$
As a consequence, we obtain
$$
\langle G, d\mu_{y_1}\rangle =  E[G(\pa_m F)] \quad\text{for a.e. }y_1\in\R.
$$
Hence condition (H) is equivalent (in the stationary ergodic setting) to 
$$
E[|\pa_m F|^2]>0,\quad\text{i.e. } F \text{ non constant a.s.}
$$
We deduce that assumptions (H) and (H') are equivalent in the periodic
and quasi-periodic settings. In the general stationary ergodic
setting, however, condition \eqref{hyp:nondegenerate} is stronger than
(H). But since we do not know whether \eqref{hyp:ndg_ergodic} is a
necessary condition for (H') in the stationary setting, we cannot
really assert that (H') is stronger than (H).

\subsection{Korn-type inequalities: assumption (H'')}

We now give a sufficient condition for (H''). Notice that our work in
this regard is related to the paper by Desvillettes and Villani
\cite{Desvillettes:2002}, in which the authors prove that for all
bounded domains $\Omega\subset \R^N$ which lack an axis of symmetry,
there exists a constant $K(\Omega)>0$ such that 
$$
\begin{aligned}
\|D(u)\|_{L^2(\Omega)}\geq K(\Omega) \|\nabla u \|_{L^2(\Omega)}\\
\forall u \in H^1(\Omega)^N\ \text{s.t. } u\cdot \nu\vert_{\pa\Omega}=0. 
\end{aligned}
$$
The differences with our work are two-fold: first, in our case, the
domain $R$ is an unbounded strip, which prevents us from using Rellich
compactness results in order to prove (H''). Moreover, the tangency
condition only holds on the lower boundary of $R$. However, as in
\cite{Desvillettes:2002}, we show that condition (H'') is in fact
related to the absence of rotational invariance of the boundary
$\Gamma$. Let us stress that this notion is related, although not
equivalent, to the non-degeneracy assumption of the previous paragraph
(see \eqref{hyp:nondegenerate}).

We first define the set of rotational invariant curves:

\begin{definition} For $(y_0,R)\in\R^2\times [0,\infty)$, denote by $\mathcal C(y_0,R)$ the circle with center $y_0$ and radius $R$.

For all $A>0$, we set
\begin{eqnarray*}
 \mathcal R_A&:=&\{ \gamma \in W^{1,\infty}([0,A])^2, \ \exists (y_0,
 R)\in \R^2\times (0,\infty),\ \gamma([0, A])\subset \mathcal C(y_0,
 R) \}\\ &&\cup \{ \gamma \in W^{1,\infty}([0,A])^2,
 \nu_\gamma=\text{cst.}\}, 
\end{eqnarray*} 
where $\nu_\gamma$ is a normal vector to the curve $\gamma$, namely
$$\nu_\gamma =\frac{1}{({\gamma_1'}^2+{\gamma_2'}^2)}\begin{pmatrix}
  -\gamma_2'\\\gamma_1'        
                                          \end{pmatrix}. 
$$

\end{definition}
Notice that $\mathcal R_A$ is a closed set with respect to the weak -
$\ast$  topology in $W^{1,\infty}$.

We then have the following result:

\begin{lemma}
Let $\omega\in W^{1, \infty}(\R)$. For $A>0$, $k\in\Z$, let 
$$
\gamma_k^A:y_1\in [0,A]\mapsto (y_1, \omega(y_1+kA)).
$$

Assume that there exists $A>0$ such that
\be\label{hyp:Korn}
\overline{\{\gamma_k^A, k\in \Z\}}\cap  \mathcal R_A=\emptyset ,
\ee
where the closure is taken with respect to the weak - $ \ast$ topology
in $W^{1, \infty}.$ 
Then (H'') holds.
\label{lem:Korn}
\end{lemma}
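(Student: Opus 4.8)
The plan is to argue by contradiction, exploiting a compactness/concentration argument adapted to the unbounded strip $R$. Suppose (H'') fails. Then there is a sequence $u^n \in C^\infty_c(\overline{R})$ with $u^n \cdot \nu|_\Gamma = 0$, normalized so that $\|u^n\|_{L^2(R)} = 1$, while $\|D(u^n)\|_{L^2(R)} \to 0$. The first obstacle is that $R$ is unbounded in the $y_1$ direction, so $\|u^n\|_{L^2(R)}=1$ does not by itself give $H^1$ bounds on all of $R$, nor can one invoke Rellich. The remedy is twofold: first, use the Poincaré-type inequality of Lemma \ref{lem:Poincare} — more precisely its proof, which bounds $\|u\|_{L^2(R\cap\{|y_1|<L\})}$ in terms of $\|\nabla u\|_{L^2}$ on a slightly larger slab — to convert the vanishing of $D(u^n)$ into a statement about $\nabla u^n$. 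Actually the cleaner route: on any bounded sub-slab $R_L := R \cap \{|y_1| < L\}$, a standard Korn inequality (Korn's second inequality on a Lipschitz bounded domain) gives $\|\nabla u^n\|_{L^2(R_L)} \le C_L(\|D(u^n)\|_{L^2(R_L)} + \|u^n\|_{L^2(R_L)})$, hence $\|\nabla u^n\|_{L^2(R_L)}$ is bounded for each $L$. A diagonal extraction then produces a limit $u \in H^1_{loc}(\overline{R})$ with $D(u) = 0$ on all of $R$, $u \cdot \nu|_\Gamma = 0$, and $\|u\|_{L^2(R)} \le 1$.

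Next I would use the classical fact that $D(u) = 0$ on a connected open set forces $u$ to be an infinitesimal rigid motion: $u(y) = a + b\, y^\perp$ for constants $a \in \R^2$, $b \in \R$, where $y^\perp = (-y_2, y_1)$. The constraint $u \cdot \nu = 0$ on $\Gamma$ then reads $(a + b\,\gamma^\perp)\cdot \nu_\gamma = 0$ along the curve $\gamma(y_1) = (y_1, \omega(y_1))$. If $b \neq 0$, dividing through shows that $\nu_\gamma$ is everywhere orthogonal to $\gamma - y_0$ for the fixed point $y_0 = -\frac{1}{b}(a_2, -a_1)$ (suitably signed), i.e. $\Gamma$ lies on a circle centered at $y_0$; if $b = 0$, the condition $a \cdot \nu_\gamma \equiv 0$ with $a \neq 0$ forces $\nu_\gamma$ constant, i.e. $\Gamma$ is a straight line. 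Restricting to $y_1 \in [0,A]$, in either case the curve $\gamma_0^A = \gamma|_{[0,A]}$ lies in $\mathcal R_A$. Here is where hypothesis \eqref{hyp:Korn} must be brought in: the hypothesis is stated for the whole family $\{\gamma_k^A\}_{k\in\Z}$ and its weak-$*$ closure, not just for $\gamma_0^A$, because the concentration of the mass of $u^n$ may escape to $y_1 = \pm\infty$. So the argument must be run not on a fixed slab but on a sequence of translated slabs that follow the mass.

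Thus the crux — and the step I expect to be the main obstacle — is handling the possible loss of compactness at spatial infinity. I would proceed as follows: since $\|u^n\|_{L^2(R)} = 1$, for each $n$ choose a center $c_n \in \Z$ (or in $\R$, then round) capturing a fixed positive fraction of the mass near $\{y_1 \approx c_n A\}$; translate, setting $\tilde u^n(y) := u^n(y_1 + c_n A, y_2)$, which solves the same problem but over the translated domain $R - (c_n A, 0)$, whose boundary is the graph of $\omega(\cdot + c_n A)$. The translated curves, restricted to $[0,A]$, are exactly $\gamma_{c_n}^A$ (up to the integer rounding). Now on bounded slabs one has uniform Korn and Poincaré control, so a subsequence of $\tilde u^n$ converges in $H^1_{loc}$ to some $\tilde u$ with $D(\tilde u) = 0$ and $\tilde u \cdot \tilde\nu = 0$ on the limiting boundary, which is the graph of a weak-$*$ limit $\tilde\omega$ of $\omega(\cdot + c_n A)$; moreover $\tilde u \not\equiv 0$ because it retains the captured fraction of mass (one checks the captured mass cannot itself escape to infinity within the translated frame, using the slab Poincaré bound to rule out horizontal escape combined with a tightness argument — this is the delicate point and requires that the fraction be genuinely localized, which one arranges by a concentration-compactness dichotomy, subdividing the mass and iterating finitely many times). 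Then $\tilde u$ is a nonzero rigid motion tangent to the graph of $\tilde\omega$ on $[0,A]$, so $\gamma := (\cdot,\tilde\omega)|_{[0,A]} \in \mathcal R_A$. But $\gamma$ lies in $\overline{\{\gamma_k^A : k \in \Z\}}$ for the weak-$*$ topology (it is a weak-$*$ limit of the $\gamma_{c_n}^A$, using that $\omega \in W^{1,\infty}$ gives uniform bounds and hence weak-$*$ precompactness), contradicting \eqref{hyp:Korn}. Finally, reabsorbing the lower-order term: once we know $\|\nabla u^n\|_{L^2(R)}$ is not controlled by $\|D(u^n)\|_{L^2(R)}$ alone one normalizes instead by $\|\nabla u^n\|_{L^2(R)}=1$ and uses the same concentration argument together with the Poincaré inequality (H') — valid under the standing hypotheses — to get $\|u^n\|_{L^2} \le C$, completing the equivalence $\|D(u)\|_{L^2} \gtrsim \|\nabla u\|_{L^2} \gtrsim \|u\|_{L^2}$; I would present (H'') in the $\|u\|_{L^2} \le \mathcal C \|D(u)\|_{L^2}$ form directly and note that combining with an interior Korn estimate upgrades it to control of $\|\nabla u\|_{L^2}$ as well.
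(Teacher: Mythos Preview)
Your overall architecture—contradict, translate to follow the mass, extract a rigid-motion limit tangent to a limiting curve, and conclude that curve lies in $\mathcal R_A$—is the same skeleton the paper uses. But the paper avoids the concentration-compactness difficulties you yourself flag as ``the delicate point'' by two reductions you do not make, and the step you leave vague is in fact a genuine gap.

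First, the paper does not work directly in $R$. Via an auxiliary lemma (the inequality in $R_Y$ is equivalent for all $Y>\sup\omega$) it may take $Y$ large, and then uses Nitsche's extension operator to carry $u$ to a \emph{flat} strip $Q=\R\times(\inf\omega-\eta,Y)$ with $\|D(Eu)\|_{L^2(Q)}\le C\|D(u)\|_{L^2(R_Y)}$. The gain is that $Q$ is translation-invariant in $y_1$: after shifting by $(k_nA,0)$ the domain is always the same fixed bounded rectangle $Q_0=Q\cap\{0<y_1<A\}$, so the compactness argument lives on a single domain and the varying-boundary issues in your sketch disappear.

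Second—and this is the key simplification—the paper proves the inequality \emph{slice by slice}: it shows $\|u\|_{L^2(Q_{k,A})}\le C_A\|D(u)\|_{L^2(Q_{k,A})}$ uniformly in $k$, and sums. The contradiction then produces a sequence on $Q_{k_n,A}$; translating to $Q_0$ and normalizing, one is on a fixed bounded Lipschitz domain, so standard Korn plus Rellich immediately give a nonzero rigid limit. No concentration-compactness dichotomy is needed, and the vanishing scenario does not arise at all. In your global approach, by contrast, vanishing ($\sup_k\|u^n\|_{L^2(Q_{k,A})}\to 0$ while $\|u^n\|_{L^2(R)}=1$) is a real possibility that your phrase ``subdividing the mass and iterating finitely many times'' does not address: dichotomy handles splitting, not spreading, and ruling out spreading here amounts to already knowing the slice-wise inequality you are trying to prove.

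Two smaller points. The passage to the limit in the tangency condition deserves more care than you give it: the paper writes it as $v_n(\gamma_{k_n})\cdot(\gamma_{k_n}')^{\perp}=0$, uses uniform convergence $\gamma_{k_n}\to\bar\gamma$ together with $H^{1/2}$-trace compactness to get $v_n(\gamma_{k_n})\to\bar v(\bar\gamma)$ strongly in $L^2$, and then pairs this against the merely weak-$*$ convergent $\gamma_{k_n}'$. And your final appeal to (H') is neither needed nor available as a standing hypothesis here.
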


Assumption \eqref{hyp:Korn} means that each slice of length $A$ of the
boundary remains bounded away from the set of curves which are
invariant by rotation.  In particular, in the periodic case, a simple
convexity argument shows that all non-flat boundaries satisfy
\eqref{hyp:Korn} (it suffices to choose $A$ equal to the period of the
function $\omega$).

The proof of Lemma \ref{lem:Korn} uses the following technical result:
\begin{lemma}
For all $Y>\sup \omega$, let 
$$
R_Y:=\{ y \in \R^2, \; \omega(y_1)<y_2<Y\}.
$$ 

Consider the assertion
$$
(K_Y)\quad \exists C_Y>0, \ \forall u\in H^1(R_Y)\text{ s.t. } u\cdot
\nu \vert_\Gamma=0,\ \int_{R_Y} |u(y)|^2\; dy \leq C_Y \int_{R_Y}
|D(u)|^2. 
$$

If there exists $Y_0$ such that $(K_{Y_0})$ is true, then $(K_Y)$ is
true for all $Y>\sup \omega$. 

\label{lem:Korn_eq}
\end{lemma}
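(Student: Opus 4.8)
The plan is to argue by contradiction and compactness, the only global input being the hypothesis $(K_{Y_0})$. Fix $Y>\sup\omega$; I want $(K_Y)$, and I treat in detail the case $Y>Y_0$, which carries the main idea. Suppose $(K_Y)$ fails: there is a sequence $(u_n)$ in $H^1(R_Y)$ with $u_n\cdot\nu|_\Gamma=0$, $\|u_n\|_{L^2(R_Y)}=1$ and $\|D(u_n)\|_{L^2(R_Y)}\to 0$. Restricting to $R_{Y_0}\subset R_Y$ (where the tangency condition still holds) and using $(K_{Y_0})$ gives $\|u_n\|_{L^2(R_{Y_0})}\le C_{Y_0}\|D(u_n)\|_{L^2(R_{Y_0})}\to 0$.

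The key point is that on the \emph{flat} part of $R_Y$, namely $\{y_2>\sup\omega\}\cap R_Y=\R\times(\sup\omega,Y)$, one controls the vertical profile of the $L^2$-mass using only $\|D(u_n)\|$. For $y_2\in(\sup\omega,Y)$ set $\phi_n(y_2):=\int_\R u_{n,1}(y_1,y_2)^2\,dy_1$ and $\psi_n(y_2):=\int_\R u_{n,2}(y_1,y_2)^2\,dy_1$. Since $D(u_n)_{22}=\partial_2 u_{n,2}$ one has $\psi_n'=2\int_\R u_{n,2}\,D(u_n)_{22}\,dy_1$, and since $2D(u_n)_{12}=\partial_2 u_{n,1}+\partial_1 u_{n,2}$, an integration by parts in $y_1$ (legitimate after approximating $u_n$ by fields compactly supported in $y_1$) yields $\phi_n'=\int_\R\big(4\,u_{n,1}D(u_n)_{12}+2\,u_{n,2}D(u_n)_{11}\big)dy_1$. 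By Cauchy--Schwarz in $y_2$ and $\|u_n\|_{L^2(R_Y)}\le1$, $\|D(u_n)\|_{L^2(R_Y)}\to0$,
\[
\int_{\sup\omega}^{Y}\big(|\phi_n'|+|\psi_n'|\big)\,dy_2\ \le\ C\,\|u_n\|_{L^2(R_Y)}\,\|D(u_n)\|_{L^2(R_Y)}\ \longrightarrow\ 0,
\]
so $\theta_n:=\phi_n+\psi_n$ becomes asymptotically constant on $(\sup\omega,Y)$, i.e. $\mathrm{osc}_{(\sup\omega,Y)}\,\theta_n\to0$.

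Now combine the two facts. As $\R\times(\sup\omega,Y_0)\subset R_{Y_0}$, we have $\int_{\sup\omega}^{Y_0}\theta_n\le\|u_n\|_{L^2(R_{Y_0})}^2\to0$, hence $\inf_{(\sup\omega,Y_0)}\theta_n\to0$; together with $\mathrm{osc}_{(\sup\omega,Y)}\,\theta_n\to0$ this forces $\theta_n\to0$ uniformly on $(\sup\omega,Y)$, i.e. $\|u_n\|_{L^2(\R\times(\sup\omega,Y))}^2=\int_{\sup\omega}^Y\theta_n\to0$. Moreover $R_Y\cap\{y_2\le\sup\omega\}\subset R_{Y_0}$, so $\|u_n\|_{L^2(R_Y\cap\{y_2\le\sup\omega\})}\le\|u_n\|_{L^2(R_{Y_0})}\to0$. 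Adding, $\|u_n\|_{L^2(R_Y)}\to0$, contradicting $\|u_n\|_{L^2(R_Y)}=1$; hence $(K_Y)$ holds for $Y>Y_0$.

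For $\sup\omega<Y<Y_0$ the same ingredients are used: given $u\in H^1(R_Y)$ tangent on $\Gamma$, multiply $u$ by a cut-off $\chi(y_2)$ equal to $1$ near $\Gamma$ and vanishing for $y_2>Y-\delta$, extend $\chi u$ by zero to $H^1(R_{Y_0})$ (still tangent on $\Gamma$), apply $(K_{Y_0})$, and absorb the error produced by $\na\chi$ — that is, the mass of $u$ in $\R\times(Y-\delta,Y)$ — by the flatness estimate above, which bounds the oscillation of $y_2\mapsto\int_\R|u(y_1,y_2)|^2dy_1$ over $(\sup\omega,Y)$ by $C\|u\|_{L^2(R_Y)}\|D(u)\|_{L^2(R_Y)}$; choosing $\delta$ small, and iterating in $Y$ if necessary, gives $(K_Y)$. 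The step I expect to be most delicate is precisely the one that lets $(K_{Y_0})$ — an estimate on the ``bounded-below'' part of the domain — propagate all the way to the artificial top boundary $\{y_2=Y\}$, where no boundary condition holds and $D(u_n)$ has no trace, so that a naive ``integrate up'' or reflection argument (which would need $\na u_n$, not just $D(u_n)$, near that boundary) fails to close; the ODE argument for $\phi_n,\psi_n$ avoids this because only the zeroth-order fields $u_{n,i}$ and the entries of $D(u_n)$ appear in $\phi_n',\psi_n'$. A secondary technical point is the justification of the $y_1$-integration by parts, handled by density.
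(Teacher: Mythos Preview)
Your treatment of the case $Y>Y_0$ is correct and is a genuinely different argument from the paper's. The paper proceeds directly (not by contradiction): it uses $(K_{Y_0})$ together with the classical Korn inequality in $R_{Y_0}$ to control the trace of $u$ on $\{y_2=Y_0\}$, and then integrates $u$ along the diagonal directions $(1,-1)$ and $(-1,-1)$ across the strip $\R\times(Y_0,Y)$, exploiting that $(1,-1)\cdot(\partial_1-\partial_2)u$ and $(-1,-1)\cdot(\partial_1+\partial_2)u$ are linear combinations of entries of $D(u)$. Your ODE for the slice masses $\phi_n,\psi_n$ achieves the same goal --- propagating $L^2$-control in $y_2$ using only $D(u)$ --- by a neater one-dimensional mechanism, and it avoids the detour through Korn and traces.

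The case $Y<Y_0$, however, has a real gap. After cutting off and applying $(K_{Y_0})$ you obtain
\[
\|\chi u\|_{L^2}^2\ \le\ C_{Y_0}\Bigl(\|D(u)\|_{L^2(R_Y)}^2+\int|\chi'(y_2)|^2\,\theta(y_2)\,dy_2\Bigr),
\]
and the last term is of order $\|\chi'\|_\infty^2\,|\mathrm{supp}\,\chi'|\,\bar\theta$, where $\bar\theta$ is the (nearly constant, by your oscillation bound) slice mass. But $\bar\theta\sim\|u\|_{L^2(R_Y)}^2/(Y-\sup\omega)$, while $\|\chi'\|_\infty^2\,|\mathrm{supp}\,\chi'|\ge c/|\mathrm{supp}\,\chi'|$; optimising over the width of $\mathrm{supp}\,\chi'\subset(\sup\omega,Y)$ still leaves an error of size $\sim C_{Y_0}\,\|u\|_{L^2(R_Y)}^2/(Y-\sup\omega)^2$, which cannot be absorbed into the left-hand side when $Y-\sup\omega$ is small relative to $C_{Y_0}$. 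Your flatness estimate only relates the values of $\theta$ at different heights; it does not make $\theta$ itself small in terms of $\|D(u)\|$, so there is nothing to absorb with. The iteration you allude to does not rescue this, since each step reproduces an error of the same uncontrolled type. The paper handles this case by a different device: it extends $u$ across $\{y_2=Y\}$ using Nitsche's reflection operator for Lipschitz domains, which satisfies $\|D(Eu)\|_{L^2}\le C\|D(u)\|_{L^2}$; iterating this doubles the strip width at each step until one exceeds $Y_0$, and then $(K_{Y_0})$ applies directly to the extension.
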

We postpone the proof of Lemma \ref{lem:Korn_eq} until the end of the section.

Let us now prove Lemma \ref{lem:Korn}: the idea is to reduce the
problem to the study of a Korn-like inequality in a fixed compact set,
and then to use standard techniques similar to the proof of the
Poincar\'e inequality in a bounded domain.  
\vskip1mm

\noindent \textit{First step: reduction to a flat strip.}

According to Lemma \ref{lem:Korn_eq}, it is sufficient to prove the
result in a domain $R_Y$ for some $Y>\sup \om$ sufficiently large (notice that
the boundary $\Gamma$ is common to all domains $R_Y$). 

We use the extension operator for Lipschitz domains defined by Nitsche
in \cite{Nitsche:1981}. Since the result of Nitsche is set in a
half-space over a Lipschitz curve, we recall the main ideas of the
construction, and show that all arguments remain valid in the case of
a strip, provided the width of the strip is large enough. 

We denote by $\Omega_-$ the lower half-plane below $\Gamma$, namely
$$
\Omega_-:=\{ y\in \R^2,\ y_2<\omega(y_1)\}.
$$

According to the results of Stein (see \cite{Stein:1970}), there
exists a ``generalized distance'' $\delta\in \mathcal C^\infty
(\Omega_-)$ such that 
$$
\begin{aligned}
0< 2(\omega(y_1)-y_2) \leq \delta(y)\leq
C_0(\omega(y_1)-y_2)\quad\forall y\in \Omega_-,\\ 
|\pa^\alpha_y \delta(y) |\leq C_\alpha\delta(y)^{1-|\alpha|}\quad
\forall \alpha \in \N^2\ \forall y\in \Omega_-. 
\end{aligned}
$$
(In general, since $\omega$ is merely a Lipschitz function, the
function $d(\cdot, \Gamma)$ has very little regularity, whence the
need for a generalized distance.)

Let $\psi\in \mathcal C([1,2])$ such that
$$
\int_1^2 \psi(\lambda)\:d\lambda=1,\quad \int_1^2 \lambda \psi(\lambda)\:d\lambda=0.
$$
For $u \in H^1(R_Y)$,  define an extension $\tilde u$ of $u$ in a strip
$(\inf \omega - \eta, Y)$ for some $\eta>0$ by  
\begin{eqnarray*}
 \tilde u (y)&=&u(y)\quad \text{if }y\in R_Y,\\
\tilde u_i (y):&=&\int_1^2 \psi(\lambda) \left[ u_i(y_\lambda) +
  \lambda \pa_i \delta (y) \: u_2(y_\lambda) \right]\:d\lambda,\text{
  if }y\in\Omega_- \\ 
&&\text{where } y_\lambda:=(y_1, y_2 + \lambda \delta(y)),\\
\end{eqnarray*}

Choose $Y$  such that
$$
Y>2C_0\sup\om - (2C_0-1) \inf \om.
$$
Then if $\eta>0$ is sufficiently small, $y_\lambda\in R_Y$ for all
$y\in (\inf \omega - \eta, Y) $. The function $\tilde u$ thus defined
does not have any jump across $\Gamma$. Moreover, it can be checked
that 
$$
\| D(\tilde u)\|_{L^2(\R\times (\inf \omega - \eta, Y))} \leq C \| D(u)\|_{L^2(R)}.
$$
Indeed, if $y\in \Omega_-$ and $y_2>\inf \omega -\eta$,
\begin{eqnarray*}
\left[\pa_i \tilde u_j + \pa_j \tilde u_i\right](y)&=& \int_1^2
d\lambda \psi(\lambda) \left[ \left(\pa_i u_j + \pa_j
  u_i\right)(y_\lambda) + 2 \lambda^2 \pa_i\delta(y)  \pa_j\delta(y)
  \pa_2 u_2(y_\lambda)\right.\\ 
&&\qquad \qquad\qquad+ \lambda \pa_i \delta(y) \left(\pa_2 u_j + \pa_j
  u_2\right)(y_\lambda)\\ 
&&\qquad \qquad\qquad+ \lambda \pa_j \delta(y) \left(\pa_2 u_i + \pa_i
  u_2\right)(y_\lambda)\\ 
&&\qquad \qquad\qquad\left.+ 2 \lambda \pa^2_{ij} \delta(y) u_2(y_\lambda)\right].
\end{eqnarray*}
Writing $u_2(y_\lambda)$ as 
$$
u_2(y_\lambda)=u_2(y_1, y_2 + \delta(y))+ \int_1^\lambda \pa_2 u_2 (y_\mu)\:d\mu
$$
and using the condition $\int_1^2 \lambda \psi(\lambda)\:d\lambda=0$ yields
$$
|D(\tilde u)(y)|\leq C \int_1^2 |D( u)|(y_\lambda)\: d\lambda.
$$
A careful analysis of the right-hand side then allows to prove that
$$
\int_\R d y_1 \int_{\inf \omega-\eta}^{\omega(y_1)} dy_2 |D(\tilde
u)(y)|^2\leq C \int_{R_Y} |D(u)|^2. 
$$
For all additional details, we refer to \cite{Nitsche:1981}.

Consequently, we have built an extension operator 
$$
E: H^1(R_Y)\mapsto H^1(\R\times (\inf \omega -\eta, Y))
$$
such that for all $u\in H^1(R_Y)$,
$$\begin{aligned}
\|D(u)\|_{L^2(R_Y)}\leq \|D(Eu)\|_{L^2(\R\times (\inf \omega
  -\eta))}\leq C \|D(u)\|_{L^2(R_Y)},\\    
\|u\|_{L^2(R_Y)}\leq \|Eu\|_{L^2(\R\times (\inf \omega -\eta))}\leq C \|u\|_{L^2(R_Y)}.
  \end{aligned}
$$

\vskip2mm

\noindent \textit{Second step: compactification of the problem.}

In the rest of the proof, we set $Q:=\R\times (\inf \omega -\eta,
Y)$. According to the first step, we now have to prove the existence
of a constant $C$ such that for any function $u\in H^1(Q)$ satisfying
$u\cdot \nu\vert_\Gamma =0$, 
$$
\|u\|_{L^2(Q)}\leq C\|D(u)\|_{L^2(Q)}.
$$

Of course, it is sufficient to prove that there exists a constant
$C_A$ such that for all $k\in \Z$,  
\be\label{in:Korn_compact}
\|u\|_{L^2(Q_{k,A})}\leq C_A\|D(u)\|_{L^2(Q_{k,A})}\quad \forall u \in
H^1(Q_{k,A})\text{ s.t. } u\cdot \nu\vert_{\Gamma_k^A}=0 
\ee
where
$$
Q_{k,A}= Q\cap \{ y, \: kA < y_1< (k+1) A\}.
$$

Assume by contradiction that \eqref{in:Korn_compact} is false. Then
there exists a sequence of relative integers $(k_n)_{n\geq 1}$ and a
sequence $u_n \in H^1(Q_{k_n, A})$, such that for all $n$, 
$$
\|u_n\|_{L^2(Q_{k_n,A})}\geq n \|D(u_n) \|_{L^2(Q_{k_n,A})}.
$$
In the rest of the proof, we drop all sub- and superscripts $A$ in
$Q_{k,A}$, $\gamma_k^A$ in order to lighten the notation.

Let $v_n:=u_n( \cdot +(k_n, 0))/\|u_n\|_{L^2}$. Then $v_n \in
H^1(Q_{0})$ for all $n$ and  
$$
v_n\cdot \nu\vert_{\Gamma_{k_n}}=0,\quad \|v_n\|_{L^2(Q_{0})}=1,
\  \|D(v_n)\|_{L^2(Q_{0})}\leq \frac{1}{n}. 
$$
According to the standard Korn inequality (see for instance
\cite{Nitsche:1981}), there exists $C>0$ such that for all $v\in
H^1(Q_0)$,  
$$
\|\nabla v\|_{L^2(Q_{0})}\leq C (\| v\|_{L^2(Q_{0})}+\|D (v)\|_{L^2(Q_{0})}).
$$
As a consequence, the sequence $v_n$ is bounded in $H^1(Q_0)$. By
Rellich compactness, there exists a subsequence (still denoted by
$v_n$) and a limit function $\bar v\in H^1(Q_0)$ such that 
$$
\begin{aligned}
v_n \rightharpoonup \bar v \quad\text{in } w-H^1(Q_0),\\
v_n \to \bar v   \quad\text{in }L^2(Q_0).
\end{aligned}
$$
We deduce that $D(\bar v)=0$ and $\| \bar v\|_{L^2}=1$. Hence $\bar v$
is a non-zero solid vector field: there exists $(C, y_0)\in (\R\times
\R^2)\setminus\{ 0\}$ such that 
$$
\bar v(y)=(C y+ y_0)^\bot \quad\text{for a.e. }y\in Q_0.
$$
On the other hand, for all $n\in \N$, for almost every $y_1\in [0,A]$, we have 
\be\label{eq:vn}
v_{n, 1} (\gamma_{k_n}(y_1))\gamma_{k_n, 2} '(y_1)-v_{n, 2}
(\gamma_{k_n}(y_1))\ \gamma_{k_n, 1}'(y_1)=0. 
\ee
Since the sequence $ \gamma_{k_n}$ is bounded in $W^{1, \infty}$, up
to the extraction of a further subsequence, $\gamma_{k_n}$ converges
weakly - $\ast$ in  $W^{1, \infty}$ towards a function $\bar
\gamma$. Since $\gamma_{k_n,1}(y_1)=y_1$ for all $n$, we deduce that
$(\gamma_{k_n}-\bar \gamma)\cdot e_1=0$. We then pass to the limit in
the identity \eqref{eq:vn} using the following facts: 

\begin{itemize}
 \item $\gamma_{k_n}\to \bar \gamma$ in $L^\infty$, and thus 
$$
\int_0^A \left| v_n(\gamma_{k_n}) - v_n(\bar \gamma)\right|^2\leq C \|
\gamma_{k_n} -\bar \gamma \|_\infty \| \nabla v_n\|_{L^2}^2 \to 0; 
$$

\item $v_n(\bar \gamma)$ is bounded in $H^{1/2}((0,A))$, and thus 
$$
v_n(\bar \gamma)\to \bar v (\bar \gamma)\quad\text{in }L^2(0,A).
$$
\end{itemize}
At the limit, we obtain
$$
(C\bar \gamma +y_0)\cdot \bar \gamma' =0,
$$
i.e. 
$$
| C\bar \gamma+y_0|^2=\text{cst.}
$$
We deduce that $\bar \gamma\in \mathcal R_A$, and thus $\mathcal
R_A\cap \overline{\{ \gamma_k, k\in\Z\}}\neq \emptyset$, which
contradicts the assumption of the lemma. Thus \eqref{in:Korn_compact}
holds, which completes the proof.\qed 

\begin{remark}\begin{itemize}
               \item We emphasize that condition \eqref{hyp:Korn} is
                 probably not optimal. Indeed, \eqref{hyp:Korn}
                 amounts to requiring that the inequality  
$$
\|u\|_{L^2}\leq C \|D( u)\|_{L^2}
$$
holds uniformly in each slice of length $A$.
However, since our proof relies on compactness results in $L^2$, it
seems necessary to work in a fixed compact domain. Of course, if a
more ``constructive'' proof were at hand (in the spirit of Lemma
\ref{lem:Poincare}), it is likely that \eqref{hyp:Korn} could be
weakened.

\item We have already pointed out that in the periodic case,
  conditions \eqref{hyp:Korn} and \eqref{hyp:nondegenerate} are
  equivalent. In the general case, however, \eqref{hyp:Korn} is
  stronger than \eqref{hyp:nondegenerate}. Indeed,
  \eqref{hyp:nondegenerate} merely requires the frontier $\Gamma$ to
  be non-flat (uniformly on $R$), whereas \eqref{hyp:nondegenerate}
  requires that is not invariant by rotation, in addition to being
  non-flat.  

\item We have used in the proof the following Korn inequality: since
  the function $\omega$ is Lipschitz continuous, there exists a
  constant $C_K>0$ such that 
$$
\| u\|_{H^1(R)}\leq C_K(\|u \|_{L^2(R)}+ \|D (u) \|_{L^2(R)})\quad \forall u\in H^1(R).
$$
We refer to \cite{Nitsche:1981} (see also \cite{Duvaut}) for a
proof. The constant $C_K$ depends only on the Lipschitz constant of
$\omega$. The inequality holds without any assumption on the
non-degeneracy of the boundary or on the behaviour of $u$ at the
boundary $\Gamma$. 
\end{itemize}
\end{remark}

We now prove Lemma \ref{lem:Korn_eq}. Assume that there exists $Y_0$ such that $(K_{Y_0})$ holds true. Let us first prove that $(K_Y)$ is true for all $Y\in (\sup \omega, Y_0)$. Let $u\in H^1(R_Y)$ be arbitrary. Using a construction similar to the one  of Nitsche (see \cite{Nitsche:1981}), we define an extension $u_1\in H^1(R_{Y_1})$ of $u$ such that 
$$\begin{aligned}
Y_1=\sup \omega + 2 (Y-\sup \omega),\\
\|u_1\|_{L^2(   R_{Y_1})}\leq C_1\|u\|_{L^2(   R_{Y})},\quad\|D (u_1)\|_{L^2(   R_{Y_1})}\leq C_1\|D(u)\|_{L^2(   R_{Y})}.
  \end{aligned}
$$ 

Iterating this process, we define sequences $(Y_n)_{n\geq 0}, (u_n)_{n\geq 0}$ such that $u_n\in H^1(R_{Y_n})$ and $u_{n+1}$ is an extension of $u_n$ for all $n$, and
$$\begin{aligned}
Y_{n+1}=\sup \omega + 2 (Y_n-\sup \omega),\\
\|u_{n+1}\|_{L^2(   R_{Y_{n+1}})}\leq C_{n+1}\|u_n\|_{L^2(   R_{Y_n})},\quad\|D( u_{n+1})\|_{L^2(   R_{Y_{n+1}})}\leq C_{n+1}\|D(u_n)\|_{L^2(   R_{Y_n})}.
  \end{aligned}
$$ 
It can be easily checked that $\lim_{n\to \infty}Y_n=\infty$, and thus there exists $n_0>0$ such that $Y_{n_0}>Y_0$. By construction, $u_{n_0}\in H^1(R_{Y_0})$ and there exists a constant $C$ such that
$$
\|u_{n_0}\|_{L^2(   R_{Y_{n_0}})}\leq C\|u\|_{L^2(   R_{Y})},\quad\|D (u_{n_0})\|_{L^2(   R_{Y_{n_0}})}\leq C\|D(u)\|_{L^2(   R_{Y})}.
$$
Moreover,
$$
u=u_{n_0}\quad\text{on }R_Y.
$$
$\vartriangleright$ From $(K_{Y_0})$, we infer that
$$
\|u_{n_0}\|_{L^2(   R_{Y_{0}})}\leq C_{Y_0}\|D (u_{n_0})\|_{L^2(   R_{Y_{0}})},
$$
and thus
$$
\| u\|_{L^2(R_Y)}\leq C \| D(u)\|_{L^2(R_Y)}.
$$
Hence $(K_Y)$ is satisfied.

Let us now prove that $(K_Y)$ is also true for all $Y>Y_0$. Let $u\in H^1(R_Y)$ arbitrary; then $u\in H^1(R_{Y_0})$, and
$$
\| u\|_{L^2(R_{Y_0})}\leq C_{Y_0} \| D(u)\|_{L^2(R_{Y_0})}.
$$
Moreover, according to the classical Korn inequality in the channel $R_{Y_0}$, there exists a constant $C_K$ such that
$$
\| u\|_{H^1(R_{Y_0})}\leq C_K\left( \| u\|_{L^2(R_{Y_0})}+ \| D(u)\|_{L^2(R_{Y_0})}\right).
$$
Let $\Sigma:=\R\times \{Y_0\}$. Then
$$
\|u\|_{L^2(\Sigma)}\leq C \| u\|_{H^1(R_{Y_0})}\leq C \| D(u)\|_{L^2(R_{Y_0})}.
$$

Now, for any $y\in \R\times (Y_0, Y)$, let $y'\in\Sigma$ such that
$$
 y'=y +t(1,-1)\quad\text{for some } t\in\R.
$$
Then
$$
u(y)=u(y')+t\int_0^1(\pa_1-\pa_2)u(y+t(1-\tau)(1,-1))d\tau.
$$
Notice that
$$
(1,-1)\cdot(\pa_1-\pa_2)u=\pa_1 u_1 + \pa_2u_2 - (\pa_1u_2+\pa_2 u_1),$$
and thus
\begin{eqnarray*}
\int_\R\int^Y_{Y_0}\left|u(y)\cdot (1,-1)\right|^2\:dy&\leq& C\left(\|u\|_{L^2(\Sigma)}^2 + \int_\R\int^Y_{Y_0}\left|D (u)\right|^2\right) \\
&\leq& C \|Du\|^2_{L^2(R_Y)}.
\end{eqnarray*}
Similarly,
$$
\int_\R\int_Y^{Y_0}\left|u(y)\cdot (-1,-1)\right|^2\:dy\leq C \|D(u)\|^2_{L^2(R_Y)}.
$$

Eventually, we obtain
$$
\| u\|_{L^2(R_Y)}\leq C  \|Du\|^2_{L^2(R_Y)},
$$
which completes the proof.

\section{Estimates for the no-slip condition} \label{sec2}
In this section, we will prove Theorem \ref{Direstimates}. In other
words, we will establish,  for any sequence $\lambda^\eps \in [0,+\infty]$, the
  well-posedness result  and the error 
estimates that were  established in \cite{Basson:2007}  for  $\lambda^\eps = 0$. 
We will use the same general strategy, based on  the
work of Ladyzenskaya and Solonnikov \cite{Ladyvzenskaja:1983}. However, the handling of
the slip type conditions will require new arguments, due to a loss of
control on the skew-symmmetric part of the gradient. Moreover, we shall
specify the dependence of the error terms with respect to $\phi$. We shall of
course put the stress on these new arguments.  

\medskip
The starting point of the proof is an approximation scheme by
solutions in truncated channels. Therefore,we introduce the notations 
$$ \forall \, k,l \ge 0, \quad  U_{k,l} \: := \: U \cap \{ k < |x_1 | <
l\}, \quad U_k \: := \: U_{-k,k}, $$
for any set $U$ of $\R^2$.  We take as a  new unknown 
$$
 v \: :=  \: u^\eps - \tilde u^{0}, \quad \tilde u^0 \: := \: 
 1_{\Omega} \, u^0 $$
where $u^0$ is the
Poiseuille flow. As a new pressure, we take
$$ q \: := \: p \: + \: 12 \phi x_1. $$ 
 It  formally satisfies 
\begin{equation} \label{NSepsbis} 
\left\{
\begin{aligned}
 - \Delta v + \tilde u^0 \cdot \na v +  v \cdot \na \tilde u^0 + v \cdot \na
 v + \na q  &   \: = \:  1_{R^\eps} (-12 \phi, 0)  , \: x \in \Omega^\eps, \\
\div v  &   \: = \:  0, \: x \in \Omega^\eps, \\
 v\vert_{x_2 = 1}    \: = \:  0,  \: \int_{\sigma^\eps}v_1  &  \: = \:
 0, \\
v_\tau \vert_{\Gamma^\eps}  \:  =  \:  \lambda^\eps   &  ( D(v)\nu)_\tau\vert_{\Gamma^\eps},  \quad  v \cdot
\nu\vert_{\Gamma^\eps} \: 
= \:  0,
\end{aligned}
\right.
\end{equation}
where $\nu $ is the inward pointing normal vector on $\Gamma^\eps$ and
$$v_\tau=(I_d-\nu\otimes\nu)v$$
denotes the tangential part of $v$ on $\Gamma^\eps$. The system is supplemented with
 the following jump conditions at the interface $\Sigma$: 
$$ [v]\vert_{\Sigma} = 0, \quad [ -D(v) e_2 + q e_2]\vert_{\Sigma} =
(-6\phi,0). $$ 
In order to build and estimate the field $v$, we consider the
approximate problems in $\Omega^\eps_n$
\begin{equation} \label{NSepsn}  
\left\{
\begin{aligned}
 - \Delta v + \tilde u^0 \cdot \na v +  v \cdot \na \tilde u^0 + v \cdot \na
 v + \na q  &   \: = \:  1_{R^\eps} (12 \phi, 0)  , \: x \in \Omega^\eps_n, \\
\div v  &   \: = \:  0, \: x \in \Omega_n^\eps, \\
 v\vert_{x_1 =  n} \: =  \:  v\vert_{x_1 =-n} \: =  \: v\vert_{x_2 =
   1}     & \:  = \:  0, \\
v_\tau \vert_{\Gamma^\eps_n}  \:    =  \:  \lambda^\eps   &   (D(v)\nu)_\tau\vert_{\Gamma^\eps_n},   \quad   v
\cdot \nu\vert_{\Gamma^\eps_n} \: = \:  0, 
\end{aligned}
\right.
\end{equation}
and 
$$  [v]\vert_{\Sigma_n}  \: = \:  0, \quad [ -D(v) e_2  + q
    e_2]\vert_{\Sigma_n}  \: = \:  (-6\phi,0).$$
The proof divides into four steps: 
\begin{enumerate}
\item  We construct  a solution $v_n \: $ of the approximate system \eqref{NSepsn}.
\item  We derive  $H^1_{uloc}$ estimates on $v_n$ that are uniform
in $n$. This yields compactness of $(v_n)_n$, hence, as $n$ goes to
infinity, a solution $u$ of \eqref{NSeps}. 
\item  We prove uniqueness of this solution $u$. 
\item We deduce from the previous steps the desired $O(\sqrt{\eps})$
  bound in $H^1_{uloc}$ for $u - u^0$. From there, using duality
  arguments, we get the $O(\eps)$ bound in  $L^2_{uloc}$. 
\end{enumerate}

\medskip
{\em Step 1.} The wellposedness of \eqref{NSepsn} relies on an {\it a
  priori} estimate over $\Omega^\eps_n$. Multiplying formally by $v$, we obtain
\begin{equation*}
 \int_{\Omega^\eps_n} |D(v)|^2 \: + \: (\lambda^\eps)^{-1} \int_{\Gamma^\eps_n}
|v_\tau|^2 \:  =  \: -\int_{\Omega_n^\eps} \left( \tilde u^0 \otimes v +
v \otimes \tilde u^0 \right) : D(v) - \int_{R^\eps_n} 12 \phi \,
v_1  + \int_{\Sigma^\eps_n} 6 \phi v_1, 
\end{equation*}
where $v_\tau$ denotes the tangential part of $v$. As $\|\na
u^0\|_\infty \: \le \:  C \, \phi $, we obtain  
\begin{equation}  \label{estimatevepsn} 
  \|D(v) \|_{L^2(\Omega^\eps_n)}^2 \: \le \:  C \, \phi \,  \left( \|v
\|_{L^2(\Omega^\eps_n)} \,  \|D(v) \|_{L^2(\Omega^\eps_n)} \: + \:
\sqrt{n \, \eps} \,  \| v \|_{L^2(R^\eps_n)} \: + \: \sqrt{n} \, \| v
\|_{L^2(\Sigma^\eps_n)}\right). 
\end{equation}
As $v$ is zero at the upper boundary of the channel, Poincar\'e
inequality applies, to provide
$$  \|v \|_{L^2(\Omega^\eps_n)} \: \le \: C \,  \| \na v
\|_{L^2(\Omega^\eps_n)}. $$ 
where $C$ depends only on the height of the channel. Let 
$$\Omega^\eps_{bl} \: := \: \{x , \: x_2 > \eps \omega(x_1/\eps)\}$$ 
the ``rough'' half plane, and $\tilde v \in H^1(\Omega^\eps_{bl})$
the extension of $v$ which is zero outside $\Omega^\eps_n$. We can apply
to $\tilde v$  the results of Nitsche \cite{Nitsche:1981} on the Korn inequality in a half
plane bounded  by a Lipschitz curve: one has
$$ \| \na \tilde v  \|_{L^2(\Omega^\eps_{bl})} \: \le \: {\cal C} \,
\| D(\tilde v)  \|_{L^2(\Omega^\eps_{bl})} $$ 
where the constant ${\cal C}$ only depends on the Lipschitz constant of the
curve.  For $\Omega^\eps_{bl}$, this Lipschitz constant, and therefore
the estimates, are  uniform in $\eps$. We insist that this inequality
is homogenenous: it does not involve the $L^2$ norm of $\tilde{v}$,
contrary to the more general inhomogeneous Korn inequality.   
Back to $v$, we get 
$$   \|\na v \|_{L^2(\Omega^\eps_n)} \: \le \: C \| D(v) \|_{L^2(\Omega^\eps_n)}. $$ 
Hence,  denoting for all $k \in \N$
$$ E_k \: :=  \:  \|v \|^2_{L^2(\Omega^\eps_k)}  \: + \: \|\na v
\|^2_{L^2(\Omega^\eps_k)} \: + \: \|D(v) \|^2_{L^2(\Omega^\eps_k)} $$
the combination of Poincar\'e and Korn inequalities  leads to $ E_n
\: \le \:  C \|D(v) \|^2_{L^2(\Omega^\eps_n)}$.  

\medskip
Now, by rescaling either the Poincar\'e inequality when $\lambda^\eps = 0$,
or the inequality in (H') when $\lambda^\eps \neq 0$, we get
$$ \| v \|_{L^2(R^\eps_n)} \: \le \:  C \,  \eps \, \| \na v
\|_{L^2(R^\eps_n)}. $$
Then, we deduce 
$$ \| v \|_{L^2(\Sigma^\eps_n)} \: \le \:  C \,  \sqrt{\eps} \, \| v
\|_{H^1(R^\eps_n)} \: \le \: C' \,  \sqrt{\eps}  \, \| \na v
\|_{L^2(R^\eps_n)}. $$ 
Back to the energy estimate \eqref{estimatevepsn}, we end up with 
\begin{equation*}
\begin{aligned}
 E_n \:  & \le  \:  C \, \| D(v) \|^2_{L^2(\Omega^\eps_n)} \: \le \: C'
 \, \phi \left( \|v
\|_{L^2(\Omega^\eps_n)} \,  \|D(v) \|_{L^2(\Omega^\eps_n)} \: + \:
\sqrt{n \, \eps} \,   \|\na v \|_{L^2(R^\eps_n)} \right) \\
& \le \: C' \phi \left(E_n \: + \: \sqrt{n \, \eps} \, 
\sqrt{E_n}\right) \: \le \:   C' \phi E_n \: + \: \frac{1}{2} E_n \: +
\:  \frac{C'^2}{2} \, \phi^2 \, n \, \eps.
\end{aligned}
\end{equation*}
so that, for $\phi$ small enough,  we have the global estimate
\begin{equation} \label{estimateEn}
 E_n \:  \le \: C \, \phi^2 \,  n \,  \eps.
\end{equation}

\medskip
Thanks to  this estimate, one obtains by classical arguments a
variational solution  $v_n \in 
H^1(\Omega^\eps_n)$ of \eqref{NSepsn}. The uniqueness of this solution
(for $\phi$ small enough) is deduced from the same kind of energy
estimates, performed on the difference of two solutions. We leave the
details to the reader. 

\bigskip
{\em Step 2.}
The next step in the proof of Theorem \ref{Direstimates} is the
derivation of uniform $H^1_{uloc}$ bounds on $v_n$. The idea, which
originates in a work of Ladyzenskaya and Solonnikov, {\em is to prove by
induction on $k'=n-k$} that 
\be\label{in:energy_rec}E_k \: \le \:  C_0 \, \phi^2 \, (k+1) \, \eps, \quad C \mbox{ large enough}.  \ee
 Once the bound on  the $E_k$'s  is proved, we can use it with
 $k=n-1$, so that   $E_1 \: \le  \: C \, \phi^2  \eps$.
 This gives a control on a unit slice of the channel
 around $x_1=0$. But as will be clear from the proof of the
 induction relation, $x_1 = 0$ plays no special role: in other words
 the same bound holds for any unit slice of the channel, which gives
 the uniform $H^1_{uloc}$ bound. 

\medskip
Let us now describe the induction process. First, by
\eqref{estimateEn}, the induction assumption holds 
with  $k'=0$. To go from $k'-1$ to $k'$, that is from $k+1$ to
$k$, we shall need the following inequality: 
\begin{equation} \label{ineqEk}
\forall k \le n,   \quad E_k \: \le \: C_1 \left( E_{k+1} - E_k \: +  \:
(E_{k+1} - E_k)^{3/2} \: + \: \phi^2 \, (k+1) \, \eps \right).
\end{equation}
This inequality at hand, and assuming $E_{k+1} \: \le \:  C_0 \, \phi^2
\, (k+2) \, \eps$, one obtains straightforwardly that  $E_k  \: \le \:
C_0\, \phi^2 \,  (k+1) \, \eps \:$ for all $k\geq C_0 -1$ provided $C_0$ is chosen large enough. For $k\leq C_0-1$, we have merely $E_k\le C_0\, \phi^2 \,  (\lfloor C_0 \rfloor+1) \, \eps \:$. Hence, up to a new definition of the constant $C_0$, we obtain inequality \eqref{in:energy_rec}.

\medskip
An inequality similar to \eqref{ineqEk} has been established by one of
the authors  in 
\cite{Basson:2007}, for the case $\lambda^\eps = 0$. The discrete variable $k$  is replaced
in \cite{Basson:2007} by a continuous variable $\eta$, but the correspondence from
one to another is obvious. We also refer to the original paper
\cite{Ladyvzenskaja:1983}, and to the boundary layer analysis in
\cite{GerMas}, in which similar inequalities are derived.

\medskip
 Relation \eqref{ineqEk} follows from localized energy
estimates. We introduce some truncation function $\chi_k = \chi_k(x_1)$, such that
$\chi_k = 1$ over $\Omega^\eps_k$,  $\chi_k=0$ outside
$\Omega^\eps_{k+1}$, and $|\chi'_k| \le 2$.   Multiplying by $\chi_k \, v$
within \eqref{NSepsn} and integrating by parts, we deduce that 
\begin{align*}
&\int_{\Omega^\eps} \chi_k |D(v) |^2 \: + \: (\lambda^\eps)^{-1}
\int_{\Gamma^\eps} \chi_k| v_\tau|^2  \\
 \: \le \: & \int_{\Omega^\eps} \chi_k
(\tilde u^0 \otimes v + v \otimes \tilde u^0) : D(v) \: - \: \int_{R^\eps}
12 \phi \chi_k v_1 \: + \:  \int_{\Sigma^\eps} 6 \phi v_1\chi_k \\
 &\: +\int_{\Omega^\eps} D(v) : ( \na \chi_k \otimes v) 
\:  + \:  \int_{\Omega^\eps}  (\tilde u^0 \otimes v + v \otimes \tilde u^0) :
(\na \chi_k \otimes v)  \\
& \:  + \: \int_{\Omega^\eps} (v \otimes v) :
(\na \chi_k \otimes v) \: + \:
\int_{\Omega^\eps} q\na \chi_k \cdot v   \: = \: \sum_{j=1}^7 I_j.  
\end{align*}
where $\na \chi_k = (\chi'_k, 0)$. 
The r.h.s. of the inequality has two different parts:
\begin{itemize}
\item The first three terms are very similar to those of step 1. They
  are treated along the same lines:
$$ \sum_{j=1}^3|I_j| \: \le \: C \, \phi \,
  \left(E_{k+1} + \sqrt{(k+1) \: \eps} \, \sqrt{E_{k+1}}\right). $$
\item The remaining terms involve derivatives of $\chi_k$: they are
  supported in $\Omega_{k,k+1}$. Standard manipulations yield the
  bounds:
$$  |I_4| \: + \: |I_5| \: \le \: C \,  (E_{k+1}- E_k), \quad |I_6|
  \: \le \: C (E_{k+1} - E_k)^{3/2}. $$
\item The treatment of the pressure term is a little more
  tricky. We  decompose 
$$ \Omega^\eps_{k,k+1} \: = \: \Omega_{k,k+1}^{\eps,-} \: \cup \: \Omega_{k,k+1}^{\eps,+},
  \quad \Omega_{k,k+1}^{\eps,\pm} \: :=  \: \Omega^\eps_{k,k+1} \cap \{\pm x_1 \ge
  0\}.  $$
The zero flux condition on $v$ implies that
$\int_{\Omega_{k,k+1}^{\eps,\pm}} f(x_1)  v_1 = 0$ for any function $f$
  depending only on $x_1$. Thus, 
$$ I_7 \: = \: \int_{\Omega^\eps_{k,k+1}} q \,  \chi'_k \, v_1 \: = \:
 \: = \:
\int_{\Omega^{\eps,-}_{k,k+1}} (q - q_k^-) \, \chi'_k \, v_1 \: + \:
\int_{\Omega^{\eps,+}_{k,k+1}} (q - q_k^+) \, \chi'_k v_1, $$
where $q_k^\pm$  is the average of $q$ over $\Omega_{k,k+1}^{\eps,\pm}$. Then,
we use the well-known estimate 
$$  \left\| q -  \oint_{{\cal O}} q\right\|_{L^2({\cal O})}  \: \le \: {\cal C} \, \|
\Delta v + f \|_{H^{-1}({\cal O})} $$
for the Stokes system 
$$-\Delta v + \na q = f, \quad \div u = 0 \mbox{ in } {\cal O}$$ 
where $C$ only depends on the measure of
${\cal O}$ and the Lipschitz constant of $\pa {\cal O}$. 
We take here  $\displaystyle {\cal O} =
\Omega_{k,k+1}^{\eps,\pm}$ (so that the constant is uniform in $k$ and $\eps$), and 
$$ f \: := \: -\div \left(\tilde u^0 \otimes v + v \otimes \tilde u^0 + v
\otimes v \right) + 1_{R^\eps} (12\phi,0).
$$

From there, one gets after a few computations
\begin{align*}
 | I_7 | \: & \le \:  C \, \left(  \| q -  q_k^-
 \|_{L^2(\Omega^{\eps,-}_{k,k+1})} + \| q -  q_k^+
 \|_{L^2(\Omega^{\eps,+}_{k,k+1})} \right)\,  \| v \|_{L^2(\Omega_{k,k+1})} \\
&  \le \:  C \,  \left( \phi \, \sqrt{\, \eps} \, \sqrt{E_{k+1} - E_k}  \: +
 \: (E_{k+1} - E_k) \: + \:  (E_{k+1} - E_k)^{3/2}  \right) \\
& \le \:  C \,  \left( \phi^2 \, \eps \: + \:  (E_{k+1} - E_k) \: + \:
 (E_{k+1} - E_k)^{3/2} \right). 
\end{align*}
\end{itemize}
We refer to \cite{Basson:2007} for more details. By gathering all the inequalitites
on the $I_j$'s, we obtain for $\phi$ small enough
\begin{eqnarray} \label{ineqEk2} 
\int_{\Omega^\eps} \chi_k |D(v) |^2   & \le & C \left( E_{k+1} - E_k \: +  \:
(E_{k+1} - E_k)^{3/2} \: + \: \phi^2 \, (k+1) \, \eps \right)\\
&&+ C \phi\,\left(E_k + \phi \sqrt{(k+1)\eps}\sqrt{E_k}\right)
\end{eqnarray}
Now, we have
\begin{align*}
\int_{\Omega^\eps} \chi_k |D(v) |^2  \: & \ge  \: \int_{\Omega^\eps}
\chi_k^2 |D(v) |^2 \: \ge \: \int_{\Omega^\eps}  |D(\chi_k v) |^2 \: -
\:  \int_{\Omega^\eps}   | \chi'_k |^2  |v^2| \\
\: & \ge  \:   \int_{\Omega^\eps}  |D(\chi_k v) |^2   - 4(E_{k+1} -
E_k). 
\end{align*}
As  $\chi_k \, v$ is zero outside $\Omega^\eps_{k+1}$, we can proceed
as in Step 1, to  get
$$ \int_{\Omega^\eps}  |D(\chi_k v) |^2 \: \ge \:  c \, E_{k+1} \: \ge
\:  c \, E_k + c \, (E_{k+1} - E_k). $$
Finally, 
$$ \int_{\Omega^\eps} \chi_k |D(v) |^2 \: \ge \:C\, E_k \: - \:  c \,
(E_{k+1} - E_k). $$
Combining this inequality with \eqref{ineqEk2} gives the result provided $\phi$ is small enough.

\medskip
As we have already explained, once inequality \eqref{ineqEk} is
proved, one obtains easily a $O(\sqrt{\eps})$ $\, H^1_{uloc}$ bound on
$v_n$. By standard compactness arguments, any accumulation point $v$
of $(v_n)$ is a  solution  of \eqref{NSepsbis}. It provides a
solution $u^\eps$ of the original system \eqref{NSeps}. Moreover,  
\begin{equation} \label{estimueps}
\| u^\eps - u^0\|_{H^1_{uloc}(\Omega^\eps)} \: \le \: C \, \phi \,
\sqrt{\eps}, \quad \| u^\eps \|_{H^1_{uloc}(\Omega^\eps)} \: \le \: C \,\phi. 
\end{equation}
 
\bigskip 
{\em Step 3.} It remains to prove the uniqueness of the solution
 in  $H^1_{uloc}$.

\medskip
Let now   $v = u^{\eps,2} - u^{\eps,1}$ the difference between two
solutions of \eqref{NSeps}. It satisfies 
$$ -\Delta v + \div \left( u^{\eps,1} \otimes  v +  v \otimes  u^{\eps,1}  + v
\otimes  v\right) + \na q  = 0 $$
together with $\div v = 0$ and homogeneous jump and boundary
conditions. We can always  assume that $u^{\eps,1}$
satisfies the bounds in \eqref{estimueps}.  Then, performing energy
estimates similar to those of  Step 2, we get, for $\phi$ small enough: 
\begin{equation*}
\forall k, \:  \quad E_k \: \le \: C \left( (E_{k+1} - E_k) \:  + \:  (E_{k+1}
- E_k)^{3/2} + \: \phi \, \sqrt{\eps} \, E_k\right)
\end{equation*} 
 We have used implicitly that
\begin{align*}
\left|\int_{\Omega^\eps} \chi_k u^{\eps,1} \otimes \, v : \na(v) \right|
\: & \le   \:  \sum_{j=0}^k \int_{\Omega^\eps_{j,j+1}} |u^{\eps,1}| \,
|v| \, | \na(v)| \\
& \le   \: \sum_{j=0}^k  \| u^{\eps,1} \|_{L^4(\Omega^\eps_{j,j+1})} \,
 \| v\|_{L^4(\Omega^\eps_{j,j+1})} \| \na v\|_{L^2(\Omega^\eps_{j,j+1})}
 \\
& \le  \:  \| u^{\eps,1} \|_{H^1_{uloc}} \sum_{j=0}^k \|v
 \|^2_{H^1(\Omega^\eps_{j,j+1})} \: \le \: C \phi \, \sqrt{\eps} E_{k+1}.
\end{align*} 
As $v$ belongs to $H^1_{uloc}$, $\: E_{k+1} - E_k $ is bounded uniformly
in $k$: eventually, for  $\phi \,  \sqrt{\eps}$  small enough, we get 
$ E_k \le C $ for all $k$, which means that $v$ is of finite
energy. The fact that $v= 0$ then follows from a classical {\em global energy
  estimate}, performed on the whole channel $\Omega^\eps$. This
concludes the proof. 

\bigskip
 {\em Step 4}. Note that, by the previous steps, we have  established
 not only the well-posedness, but the $H^1_{uloc}$ estimate
$$ \| u^\eps - u^0 \|_{H^1_{uloc}(\Omega)} \: \le \: C \,  \phi
 \sqrt{\eps}. $$

From there, one obtains that 
$$ \| u^\eps - u^0 \|_{L^2_{uloc}(\Sigma)} \: \le \: C \,  \phi \,
\eps.  $$
The $L^2_{uloc}(\Omega)$ estimate  follows from estimates on a 
 {\em linear problem in the channel $\Omega$}:
\begin{align*}
-\Delta v + \na q + u^0 \cdot \na v + v \cdot \na u^0 & = \div F^\eps, \\
\div v & = 0, \\
v\vert_{\Sigma} & = \varphi^\eps, \\
\end{align*}
where $v \, = \,  u^\eps - u^0$, $\, \varphi^\eps \, =  \,
v\vert_\Sigma \, = \, O(\phi \eps)$  in
$L^2_{uloc}$ and $\, F^\eps \, = \,  v \otimes  v \, =  \, O(\phi \eps)$ in
$L^2_{uloc}$   thanks to the $H^1_{uloc}$ bound. By a duality
argument, one can then prove that $\| v \|_{L^2_{uloc}(\Omega)}$ is also
$O(\phi \, \eps)$ in $L^2_{uloc}$.
This duality argument is explained in the paper \cite[section 3.2]{Basson:2007}:  as the
slip condition in  the boundary condition at $\pa \Omega^\eps$ plays
no role, we skip the proof.

\section{Boundary layer analysis} \label{sec3}
In order to improve our description of $u^\eps$, we must
analyze  the behaviour of the fluid in the boundary layer. 
The starting point of this analysis is a formal expansion: we
anticipate  that, near the rough boundary, we have
$$ u^\eps(x) \: = \:  u^0(x) \: + \: 6 \, \phi \,  \eps v(x/\eps) $$
where $u^0$ is the Poiseuille flow, and $v= v(y)$ is a boundary
layer corrector, due to the fact that $u^0$ does not satisfy either
the Dirichlet boundary condition when $\lambda = 0$, or the slip
boundary condition when $\lambda \neq 0$. We shall focus on the latter
case, which is the new one. Classically, the rescaled variable $y$
belongs to the bumped  half plane $\Omega^{bl} := \{y, \, y_2 >
\omega(y_1) \}$, and by plugging the expansion in \eqref{NSeps}, one finds that  
\begin{equation} \label{BL} \tag{BL}
\left\{
\begin{aligned}
-\Delta v + \na p  = 0, \quad & y \in \Omega^{bl}, \\ 
\div v  = 0, \quad & y \in \Omega^{bl}, \\ 
 (D(v) \nu)_\tau   = - (D((y_2, 0)) \nu)_\tau, \quad & y \in \pa \Omega^{bl}, \\
v \cdot \nu = - (y_2, 0) \cdot \nu,  \quad &  y \in \pa \Omega^{bl}.
\end{aligned}
\right.
\end{equation}
where 
$$\nu = \nu(y) \: := \: \frac{1}{\sqrt{1 + \gamma'^2(y_1)}}
(-\omega'(y_1),1) $$ 
is a unit normal vector. The inhomogeneous boundary terms
come from the Poiseuille flow ($x_2(1 - x_2) \approx \eps y_2$ near the boundary). 

\medskip
System \eqref{BL} is different  from the boundary layer system
met in the former studies on wall laws: it has inhomogeneous Navier 
boundary conditions, instead of Dirichlet ones. Nevertheless, we are
able to obtain similar results,  as regards well-posedness and
 qualitative issues. 
\subsection{Well-posedness of the boundary layer}
First, we have the following  well-posedness result: 
\begin{theorem}
System \eqref{BL} has a unique solution $v \in
H^1_{loc}(\overline{\Omega^{bl}})$ satisfying:
$$ \sup_{k} \int_{\Omega^{bl}_{k,k+1}} |
\na v |^2  < 
+\infty  \quad \mbox{ where for all }  \:  k,l, \quad
\Omega_{k,l}^{bl} := \Omega_{bl} \cap \{ k < y_1 < l \}.$$
\end{theorem}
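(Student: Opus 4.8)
The plan is to follow the now-classical scheme for boundary-layer systems over rough half-planes (as in \cite{GerMas, Basson:2007}), adapted to the inhomogeneous Navier conditions. First I would reduce \eqref{BL} to a system with homogeneous-in-the-bulk source by introducing a suitable lift of the boundary data. Concretely, one picks a divergence-free field $V \in H^1_{loc}(\overline{\Omega^{bl}})$, supported in a fixed neighbourhood of $\pa \Omega^{bl}$ (hence with uniformly bounded slices), such that $V \cdot \nu = -(y_2,0)\cdot\nu$ and $(D(V)\nu)_\tau = -(D((y_2,0))\nu)_\tau$ on $\pa\Omega^{bl}$; since $\omega$ is Lipschitz and bounded, such a $V$ exists with $\sup_k \|V\|_{H^1(\Omega^{bl}_{k,k+1})} < \infty$. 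Writing $v = V + w$, the corrector $w$ solves a Stokes system with right-hand side $\div F$ for some $F$ with uniformly bounded slices, homogeneous Navier conditions on $\pa\Omega^{bl}$, and no growth constraint other than the uniform-local energy bound we are after.

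The second step is an \emph{a priori} energy estimate of Saint-Venant / Ladyzenskaya--Solonnikov type, exactly in the spirit of the induction \eqref{ineqEk}--\eqref{in:energy_rec} carried out in Section~\ref{sec2}. Introducing $\tilde E_k := \|\na w\|_{L^2(\Omega^{bl}_{-k,k})}^2$ (plus the boundary term $\lambda^{-1}$-type contribution, here simply $\int_{\pa\Omega^{bl}}\chi_k |w_\tau|^2$ from the weak formulation), one multiplies the equation for $w$ by $\chi_k w$, integrates by parts, and controls the commutator terms supported in $\Omega^{bl}_{k,k+1}$ together with the pressure term via the Bogovskii/Nečas estimate $\|q - \oint q\|_{L^2(\OO)} \le \mathcal{C}\|\div(\text{stress})\|_{H^{-1}(\OO)}$ on half-slices, uniformly in $k$. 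The crucial input here is assumption (H') (equivalently its consequences in Section~\ref{sec1}): it furnishes the homogeneous Poincaré inequality $\|w\|_{L^2(R)} \le \mathcal{C}\|\na w\|_{L^2(R)}$ under the impermeability condition $w\cdot\nu|_{\pa\Omega^{bl}}=0$, which is what lets one close the estimate on the tangential trace and absorb the lower-order terms — without it one would lose control of the skew-symmetric part of $\na w$. This yields a linear recursion $\tilde E_k \le C(\tilde E_{k+1} - \tilde E_k) + C$, from which the uniform bound $\sup_k \tilde E_k < \infty$ follows by the standard iteration argument.

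The third step is existence: one solves \eqref{BL} on truncated boxes $\Omega^{bl}_{-n,n}$ with, say, $w=0$ on the lateral boundaries $\{y_1 = \pm n\}$, obtaining variational solutions $w_n$ by Lax--Milgram (the bilinear form is coercive on the divergence-free subspace precisely because of the homogeneous Korn inequality of Nitsche \cite{Nitsche:1981} combined with (H')); the $n$-uniform slice estimate from Step~2 gives weak-$H^1_{loc}$ compactness, and any limit point $w$ solves \eqref{BL} with the desired uniform-local energy bound, whence $v = V + w$ does too. Uniqueness is the final step: the difference $\delta$ of two solutions with uniform-local energy satisfies the homogeneous Stokes--Navier problem, and running the same localized estimate gives $\tilde E_k^\delta \le C(\tilde E_{k+1}^\delta - \tilde E_k^\delta)$; since $\tilde E_k^\delta$ grows at most linearly (uniform-local bound) while the recursion forces either finite total energy or a contradiction, one concludes $\delta$ has finite energy, and then a global energy estimate over all of $\Omega^{bl}$ — using (H') one last time to handle the boundary term — forces $\delta = 0$.

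The main obstacle, I expect, is precisely the pressure estimate in the localized energy identity: unlike the Dirichlet boundary-layer problem of \cite{Basson:2007}, here the tangential velocity is not controlled pointwise on $\pa\Omega^{bl}$, so the boundary contributions $\int_{\pa\Omega^{bl}}\chi_k q\, (w\cdot\nu)$ and the Navier term must be reorganized, and one must verify that the Nečas pressure estimate on the half-slices $\Omega^{bl,\pm}_{k,k+1}$ has a constant uniform in $k$ — this is where the Lipschitz-uniformity of $\omega$ and the zero-flux-type cancellation (subtracting the slice average of $q$) are used in tandem. Everything else is a routine, if lengthy, adaptation of Section~\ref{sec2}.
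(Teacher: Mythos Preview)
There is a genuine gap, and it is precisely the difficulty that the paper's proof is designed to circumvent. Your scheme works \emph{directly} in the full half-plane $\Omega^{bl}$, with truncated energies $\tilde E_k = \|\na w\|_{L^2(\Omega^{bl}_{-k,k})}^2$ and cut-offs $\chi_k = \chi_k(y_1)$. But the slices $\Omega^{bl}_{k,k+1}$ are \emph{unbounded in $y_2$}, so the commutator terms $\int D(w):(\na\chi_k\otimes w)$ and the pressure term both involve $\|w\|_{L^2(\Omega^{bl}_{k,k+1})}$, which you have no way to control from $\|\na w\|$ or $\|D(w)\|$: neither (H') nor (H'') applies on an unbounded vertical strip (they are inequalities on the bounded-height layer $R$), and there is no Poincar\'e inequality on $\Omega^{bl}_{k,k+1}$ under a mere impermeability condition at the bottom. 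The same obstruction kills the Lax--Milgram step on the truncated boxes $\Omega^{bl}_{-n,n}$: the bilinear form $\int D(w):D(\varphi)$ is not coercive on any reasonable $H^1$-type space over an infinite vertical strip with only $w\cdot\nu=0$ at the bottom (the constant field $(1,0)$ has zero symmetric gradient and satisfies the boundary condition). Your recursion $\tilde E_k\le C(\tilde E_{k+1}-\tilde E_k)+C$ therefore cannot be closed.

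The paper resolves this by first reducing \eqref{BL} to an equivalent problem on the \emph{bounded} channel $\Omega^{bl,-}=\Omega^{bl}\cap\{y_2<0\}$: the Stokes solution in $\{y_2>0\}$ is represented via the explicit Poisson kernel, and the upper boundary condition becomes a nonlocal Dirichlet-to-Neumann relation $(-2D(v)e_2+qe_2)|_{y_2=0}=DN(v|_{y_2=0})$. Only then do the Saint-Venant energy estimates become tractable --- and even so, the nonlocality of $DN$ prevents a simple two-term recursion and forces the more involved inequality \eqref{induction} with an auxiliary parameter $m$. A second point: once one is in the channel with Navier data at $\Gamma$ and the nonlocal condition at $\{y_2=0\}$, the energy identity controls only $\|D(u)\|_{L^2}$, not $\|\na u\|_{L^2}$; there is no Dirichlet condition anywhere to invoke Nitsche's homogeneous Korn inequality as in Section~\ref{sec2}. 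This is why the paper needs (H''), not (H'), to pass from $\|D(u)\|$ to $\|u\|$ on the channel. Your reliance on (H') alone would not suffice even after the reduction.
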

The proof  follows closely the lines of \cite{GerMas}, where the
case of an inhomogenenous Dirichlet  condition  (instead of Navier)
was considered. The main difficulty comes from the unboundedness of
the domain, which prevents from using the Poincar\'e inequality or 
assumptions like (H') or (H''). To overcome this difficulty, there are
two main steps:
\begin{enumerate}
\item One replaces system \eqref{BL} by an equivalent system, {\em set
  in the channel} 
$$\Omega^{bl,-} \:  := \: \Omega^{bl} \cap \{y_2 < 0\}.$$
 This equivalent system involves a nonlocal
boundary condition at $y_2 = 0$, with  a {\em Dirichlet-to-Neumann} type
operator. 
\item Once brought back to the channel $\Omega^{bl,-}$, one can follow
  the same general strategy as in the previous section, based on the truncated
  energies
$$ E_k \: :=  \:  \|v \|^2_{L^2(\Omega^{bl,-}_k)}  \: + \: \|\na v
\|^2_{L^2(\Omega^{bl,-}_k)} \: + \: \|D(v) \|^2_{L^2(\Omega^{bl,-}_k)}. $$ 
\end{enumerate}
Let us give a few hints on these two steps.

\medskip
{\em Step 1.} It relies on  the notion  of {\em transparent boundary
conditions in numerical analysis}. The formal idea is the
following: the solution $v$ of \eqref{BL} satisfies the boundary value problem
\begin{equation} \label{Stokes} 
\left\{
\begin{aligned}
-\Delta  v + \na q & = 0, \: y_2 > 0, \\
\na \cdot v & = 0, \:  y_2 > 0, \\
v\vert_{y_2=0} & = v_0, 
\end{aligned}
\right.
\end{equation}
where $v_0 = v\vert_{y_2 = 0}$. Using the Poisson kernel for the
Stokes problem in a half-plane, we have the representation
formula: 
 \begin{equation} \label{poisson}
 v(y) \: = \: \int_{\R} G(t,y_2) v_0(y_1-t)\, dt, \quad q(y) \: = \:
 \int_\R \na g(t,y_2) \cdot v_0(y_1-t) \, dt 
\end{equation}
$$ \mbox{ where } \quad 
 G(y) \: = \: \frac{2y_2}{\pi(y_1^2 +y_2^2)^2} \left( \begin{smallmatrix}
     y_1^2 & y_1 \, y_2 \\ y_1 \, y_2 & y_2^2 \end{smallmatrix} \right),
 \quad g(y) = -\frac{2y_2}{\pi(y_1^2 +y_2^2)}.  $$
 Thanks to this representation formula, we can express the stress
 $$(2 D(v)\nu - q \nu)\vert_{y_2=0} = -2 \pa_2 v + (\pa_2 v_1 -
 \pa_1 v_2) e_1 + q e_2$$
 in terms of $v$ at $y_2 = 0$. Formally, this leads to some relation 
$$ (-2D(v)e_2 + q \,e_2)\vert_{\{y_2=0\}}  \: = \: DN(v\vert_{\{y_2=0\}}) $$  
for some Dirichlet-to-Neumann type operator $DN$. 
Hence, and still at a formal level, we can replace the system \eqref{BL}
by the following system in $\Omega^{bl,-}$: 
\begin{equation} \label{BLbis} 
\left\{
\begin{aligned}
 -\Delta v + \na q   = 0, \quad &  y \in \Omega^{bl,-}, \\
 \na \cdot v  = 0, \quad  & y \in \Omega^{bl,-}, \\
(D(v) \nu)_\tau   = - (D((y_2, 0)) \nu)_\tau, \quad & y \in \pa \Omega^{bl,-}, \\
v \cdot \nu = - (y_2, 0) \cdot \nu,  \quad &  y \in \pa \Omega^{bl,-}, \\
 (-2D(v)e_2 + q \, e_2)\vert_{\{y_2=0\}}  \: = & \: DN(v\vert_{\{y_2=0\}}). 
\end{aligned}
\right.
\end{equation}
A rigorous version of these formal arguments is contained in the next proposition 
\begin{proposition}  {\bf (Equivalent formulation of \eqref{BL})} \label{propertiesstokes} 
\begin{description} 
\item[i)] {\em (Stokes problem in a half-plane)}

\smallskip 
\noindent
 For all $v_0 \in H^{1/2}_{uloc}(\R)$ there exists a unique solution
 $v \in H^1_{loc}(\overline{\R^2_+})$ of \eqref{Stokes} satisfying  
\begin{equation} \label{h1uloc}
 \sup_{k \in \Z} \int_k^{k+1} \int_0^{+\infty} |\na v |^2 \, dy_2 dy_1 < +\infty.
\end{equation}
\item[ii)] {\em (Dirichlet-to-Neumann operator)}

\smallskip \noindent
There is a unique operator 
$$ DN : H^{1/2}_{uloc}(\R) \: \mapsto \: {\cal D}'(\R)$$ 
that satisfies, for all $v_0 \in H^{1/2}_{uloc}(\R)$, and all $\varphi \in
C^{\infty}_c(\overline{\R^2_+})$ with $\na \cdot \varphi = 0$, 
\begin{equation} \label{variationalDN}
 2\int_{\R^2_+}  D(v) \cdot D(\varphi) \: = \: < DN(v_0), \: 
\varphi\vert_{\{y_2=0\}} >. 
\end{equation}
where $v$ is the solution of \eqref{Stokes}. Moreover, for all $v_0\in H^{1/2}_{uloc}(\R)$, the operator $DN(v_0)$ can be extended to a continuous
linear form over the space  $\dis H^{1/2}_c(\R)$ of $H^{1/2}$
functions with compact support.
 \item[iii)] {\em (Transparent boundary condition)}

\smallskip \noindent
Let $(v,q)$ be  a solution of \eqref{BL} in
$H^1_{loc}(\overline{\Omega^{bl}})$
with  $  \sup_{k} \: \int_{\Omega^{bl}_{k,k+1}} | \na v |^2  <
+\infty$. 
Then,  it satisfies \eqref{BLbis}.

\smallskip \noindent
\noindent 
Conversely, let $v^-$  in
$H^1_{uloc}(\Omega^{bl,-})$  be a  solution of \eqref{BLbis}.
Then, the field $v$ defined by
$$ v \: := \:  v^- \mbox{ in } \Omega^{bl,-}, \quad v \: := \: \int_\R
G(y_1-t, y_2) \,  v^-(t,0) \, dt \: \mbox{ for } y_2 > 0 $$
is a solution of \eqref{BL} in $H^1_{loc}(\overline{\Omega^{bl}})$
such that $  \sup_{k} \: \int_{\Omega^{bl}_{k,k+1}} | \na v |^2  <
+\infty$. 
\end{description}
\end{proposition}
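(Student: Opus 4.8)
The plan is to establish the three assertions in order, since (iii) relies on the existence and uniqueness statements of (i) and (ii). For part (i), I would follow the strategy already used in \cite{GerMas} and mirrored in Section~\ref{sec2}: construct the solution in truncated boxes $\R \times (0,n)$ with artificial zero boundary conditions on the lateral sides (or rather, since the domain here is a half-plane, truncate in $y_1$ as well), derive a truncated-energy estimate of the form $E_k \le C(E_{k+1}-E_k + \text{(lower order)})$, and run the Ladyzenskaya--Solonnikov induction on $k' = n-k$ to obtain a uniform $H^1_{uloc}$ bound. The key analytic inputs are the Poincar\'e inequality (available here since $v\vert_{y_2=0}=v_0$ is prescribed, so one controls $v$ in terms of $\na v$ up to the trace $v_0$) and a Caccioppoli-type bound coming from the pressure estimate on each slice. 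Uniqueness in the class \eqref{h1uloc} follows from a global energy estimate on the difference of two solutions, which is linear here (Stokes, not Navier--Stokes), so no smallness is needed.

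For part (ii), I would \emph{define} $DN$ by the variational identity \eqref{variationalDN}: given $v_0 \in H^{1/2}_{uloc}(\R)$, let $v$ be the solution from (i), and for $\varphi \in C^\infty_c(\overline{\R^2_+})$ with $\na\cdot\varphi = 0$ set $\langle DN(v_0),\varphi\vert_{y_2=0}\rangle := 2\int_{\R^2_+} D(v):D(\varphi)$. Well-definedness requires checking that the integral depends only on the trace $\varphi\vert_{y_2=0}$: if two such test fields have the same trace, their difference is divergence-free, compactly supported, and vanishes on $y_2=0$, so integrating the Stokes equation for $v$ against it gives zero (this is where one uses that $v$ solves \eqref{Stokes} and that the boundary term vanishes). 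Continuity of $DN(v_0)$ as a linear form on $H^{1/2}_c(\R)$ comes from the Poisson-kernel representation \eqref{poisson}: one reads off that $D(v)$ decays like $|y|^{-2}$ at infinity with the relevant local norms controlled by $\|v_0\|_{H^{1/2}_{uloc}}$, so $\int_{\R^2_+} D(v):D(\varphi)$ is bounded by $\|v_0\|_{H^{1/2}_{uloc}}\,\|\varphi\vert_{y_2=0}\|_{H^{1/2}}$ once $\varphi$ is lifted harmonically (or Stokes-harmonically) into the half-plane with control of its Dirichlet energy by the $H^{1/2}$ norm of its trace. Uniqueness of $DN$ is immediate since divergence-free test fields have traces dense in the relevant space.

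For part (iii), the forward implication is an integration by parts: given $(v,q)$ solving \eqref{BL} with the stated bound, restrict to $\Omega^{bl,-}$; the equations, the Navier condition on $\pa\Omega^{bl}$, and the impermeability condition are inherited directly, and the only new condition to verify is the transparent one on $\{y_2=0\}$. That is exactly \eqref{variationalDN} read in the reverse direction: since $v$ on $\{y_2>0\}$ solves the half-plane Stokes problem with data $v\vert_{y_2=0}$, the operator $DN$ applied to that trace reproduces the stress $(-2D(v)e_2 + q e_2)\vert_{y_2=0}$ in the sense of distributions, by the very definition of $DN$ and the matching of $v$ across $\{y_2=0\}$. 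For the converse, given $v^-$ solving \eqref{BLbis} in $H^1_{uloc}(\Omega^{bl,-})$, one glues in the half-plane Stokes extension $v := \int_\R G(y_1-t,y_2)v^-(t,0)\,dt$ for $y_2>0$; by part (i) this lies in $H^1_{loc}$ with the uniform slice bound, it has no jump in trace across $\{y_2=0\}$ by construction, and the transparent boundary condition is precisely the statement that the stresses from above and below match, so there is no singular contribution supported on $\{y_2=0\}$ and $(v,q)$ solves \eqref{BL} in the whole bumped half-plane. I expect the main obstacle to be the rigorous handling of the Dirichlet-to-Neumann operator on the \emph{uniformly local} spaces---in particular, making sense of $\langle DN(v_0),\cdot\rangle$ only against compactly supported $H^{1/2}$ traces (since $DN(v_0)$ need not be a bounded functional on all of $H^{1/2}_{uloc}$), and carefully justifying the decay of $D(v)$ from \eqref{poisson} so that all the integration-by-parts manipulations on the unbounded interface $\{y_2=0\}$ are legitimate. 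These are technical but follow the template of \cite{GerMas}; I would cite that paper for the parts that transfer verbatim and spell out only the modifications forced by the Navier (rather than Dirichlet) boundary condition, which affect the boundary terms but not the overall structure.
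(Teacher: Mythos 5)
Your proposal is correct in outline and follows the same template as the paper, which in fact gives no detailed argument here: it simply observes that the proof is ``almost contained in \cite{GerMas}'', the only modification being that $D(v)$ replaces $\na v$ in the variational definition \eqref{variationalDN} of $DN$ --- exactly the modification you identify at the end. Two points of comparison are worth making. First, for part (i) the route taken in \cite{GerMas} (and implicitly here, since \eqref{poisson} is displayed just before the proposition) is to \emph{construct} the solution of \eqref{Stokes} directly from the Poisson kernel $G$ and to verify \eqref{h1uloc} from kernel estimates; your alternative construction by truncated domains and Ladyzenskaya--Solonnikov induction is more roundabout and would need care, because the vertical columns $(k,k+1)\times(0,\infty)$ are unbounded in $y_2$: the Poincar\'e inequality you invoke (``control of $v$ in terms of $\na v$ up to the trace $v_0$'') degrades like $\sqrt{y_2}$, so the $L^2$ part of the truncated energies is not controlled, and one must work with gradient energies only. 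Second, your claim that uniqueness in the class \eqref{h1uloc} ``follows from a global energy estimate'' is imprecise for the same reason as in Section \ref{sec2}: the difference of two solutions is only uniformly locally of finite energy in $y_1$, so one must first run the downward induction on truncated energies (or use the kernel representation and a duality argument) before any global estimate applies; linearity spares you the smallness condition but not this step. The definition of $DN$ by \eqref{variationalDN}, its well-definedness and extension to $H^{1/2}_c(\R)$ via the decay of $G$, and the gluing/integration-by-parts argument for (iii) all match what the paper intends to import from \cite{GerMas}.
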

{\em Proof of the proposition.} The proof of the proposition is almost
contained in \cite{GerMas}. The only difference lies in the definition of
the Dirichlet-to-Neumann operator.  In \cite{GerMas}, the full gradient is
used in the definition of $DN$,  instead of its symmetric part. Here,
in order to adapt to the Navier condition at the rough boundary, 
$D(u)\nu$ substitutes to  $\pa_\nu u$, and, subsequently,  \eqref{variationalDN} substitutes to 
the relation
$$ \int_{\R^2_+} \na v \cdot \na \varphi \: = \: < DN(v_0), \: 
\varphi\vert_{\{y_2=0\}} >. $$
used in \cite{GerMas}. As these minor changes do not play any
serious role, we skip the proof. 

\bigskip
{\em Step 2.} By the previous proposition, in order to prove well-posedness
of the boundary layer system, we can work with the equivalent system
\eqref{BLbis}. As it is set in a bounded channel, it is amenable to
the kind of the analysis  performed in the previous section, for the
study of the no-slip condition. The keypoint is again to have an
induction relation between the truncated energies. However, the
nonlocal $DN$ operator prevents us from deriving a local relation like
\eqref{ineqEk}. We are able to show the following more complicated
relation: there exists $\eta > 0$ such that, for any $m > 1$, 
\begin{equation} \label{induction}
E_k \: \le \: C_1 \left(  k +
  1 + \frac{1}{m^\eta} \sup_{j\ge k+m} (E_{j+1} - E_j) \:
  + \: m \, \sup_{k+m \ge j \ge k} \left( E_{j+1} - E_j \right) \ \right).
\end{equation}
The same  relation was established in \cite{GerMas}, for the
boundary layer system with a Dirichlet condition. The proof starts
with the the change of unknowns $\:  u := v + (y_2, 0), \: \:  p :=
q$, that turns \eqref{BLbis} into 
\begin{equation*} 
\left\{
\begin{aligned}
 -\Delta u + \na p   = 0, \quad &   y \in \Omega^{bl,-}, \\
 \na \cdot u  = 0, \quad & y \in \Omega^{bl,-}, \\
(D(u) \nu)_\tau   = 0, \quad & y \in \pa \Omega_{bl}, \\
u \cdot \nu =0, \quad &  y \in \pa \Omega_{bl}, \\
 (-2D(u)e_2 + q \, e_2)\vert_{\{y_2=0\}}  \: = \: &  DN(u\vert_{\{y_2=0\}}) - (1,0). 
\end{aligned}
\right.
\end{equation*}
Afterwards, energy estimates are performed, testing against $\chi_k
u$. The only change with respect to 
\cite{GerMas} due the Navier condition is the treatment of the lower
order terms. When a Dirichlet condition holds at the boundary, one
can rely on the Poincar\'e inequality, to obtain 
$$ E_k \: \le \:  C \, \left( \int_{\Omega^{bl,-}_{k,k+1}} | \na u |^2 \: +  \: E_{k+1} - E_k \right)$$    
and then to control $E_k$  from the energy estimate (which gives a
bound on the gradient only). This is no longer possible in the case of the Navier
condition. Moreover, we cannot proceed as in the previous section, 
using the Dirichlet condition at the upper boundary of the
channel. Indeed, in our boundary layer context,  a non-local  condition holds at the upper boundary. This is where
the assumption  (H'') is needed:   it easily implies that 
$$  E_k \: \le \:  C \, \left( \int_{\Omega^{bl,-}_{k,k+1}} |  D(u) |^2 \: +  \: E_{k+1} - E_k \right)$$    
and can therefore be controlled from the energy estimate (which gives
a bound on the symmetric part of the gradient only). From there, all
computations and arguments are similar to those of \cite{GerMas}. We refer
to this paper for all necessary details.  

\subsection{Qualitative behaviour at infinity}
As the solution of \eqref{BL} is now at hand, we still need to show
its convergence to a constant field as $y_2$ goes to infinity. Here, some ergodicity property condition must be
added. We consider the stationary random setting: we take $\omega$
to be an ergodic  stationary random process (on a probability space $(M,\mu)$), obeying the assumptions of Theorem
\ref{Navestimates2}. We then state the following proposition:
\begin{proposition}
There exists $\alpha \in \R$ such that the solution $v$ of \eqref{BL} satisfies
$$ v(y) \rightarrow (\alpha,0), \quad \mbox{ as } y_2 \rightarrow +\infty,
  $$
locally  uniformly in $y_1$, almost surely and in $L^p(M)$
for all finite $p$.
\label{prop:ergo}
\end{proposition}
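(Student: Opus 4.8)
The plan is to adapt the ergodic-theoretic arguments of \cite{GerMas} to the Navier boundary-layer system \eqref{BL}. First I would exploit the structure of the problem: write $v = u - (y_2,0)$ as in the change of unknowns of Step~2 above, so that $u$ solves the Stokes system with homogeneous Navier conditions on $\pa\Omega^{bl}$ and the transparent condition at $y_2=0$ shifted by $(1,0)$. The key structural observation is that, since $\omega$ is a stationary process over $(M,\mu)$ and the equations are invariant under horizontal translations, the solution $v$ inherits stationarity: more precisely, the map $m \mapsto v(y;m)$ can be realized so that $v(y_1+h,y_2;m) = v(y_1,y_2;\tau_h m)$, using uniqueness of the $H^1_{uloc}$-bounded solution (Theorem~4.1 above) to transfer the translation symmetry of the equation onto the solution. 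This is the point where the stationary ergodic framework does the essential work.

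Next I would establish that $\pa_1 v$ and $\pa_2 v$ decay as $y_2 \to +\infty$. Since $v$ extends to a Stokes solution in the upper half-plane via the Poisson kernel $G$ (Proposition \ref{propertiesstokes}(iii)), one has the representation $v(y) = \int_\R G(y_1-t,y_2) v^-(t,0)\,dt$ for $y_2>0$, and the explicit decay of $\na G$ (like $|y_2|^{-1}$ for the relevant components, together with cancellations from the incompressibility/flux structure) gives $\na v(y) \to 0$ as $y_2 \to +\infty$, locally uniformly in $y_1$, with quantitative algebraic rates. The stationarity then upgrades this: $\E\|\na v(\cdot,y_2)\|^2$ over a unit cell is finite and one shows it tends to $0$; combined with the a priori $H^1_{uloc}$ bound this controls oscillations of $v$ in $y_1$ at large height. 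The conclusion that $v(y) \to (\alpha,0)$ rather than to a general constant vector $(\alpha,\beta)$ comes from the zero-flux/divergence-free structure forcing $\beta=0$: integrating $\div v = 0$ over a large box and using the boundary conditions shows the vertical mean of $v_2$ vanishes.

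To identify the limit and prove the convergence almost surely and in $L^p(M)$, I would proceed as follows. Define $\alpha(y_2) := \E[v_1(0,y_2;\cdot)]$ (well-defined by stationarity — it does not depend on $y_1$), show it is a bounded function of $y_2$, and use the energy/Caccioppoli estimate together with the decay of $\na v$ to prove $\alpha(y_2)$ converges to a limit $\alpha$ as $y_2\to\infty$. For the almost sure statement one invokes the ergodic theorem: the spatial averages $\frac{1}{2R}\int_{-R}^R v_1(t,y_2;m)\,dt$ converge a.s.\ to $\alpha(y_2)$, and then a diagonal argument in $y_2$ together with the equicontinuity coming from the gradient bounds (Arzel\`a--Ascoli in the $y_1$ variable, on compact sets) promotes convergence of averages to locally uniform pointwise convergence $v(y;m) \to (\alpha,0)$. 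The $L^p(M)$ convergence follows from the a.s.\ convergence plus uniform integrability, which is a consequence of the uniform (in $y$, and in $m$ thanks to the uniform constant in (H'')) $H^1_{uloc}$ bounds giving $L^p(M)$ bounds on $v(y;\cdot)$ of all orders via, e.g., interior regularity.

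The main obstacle I expect is controlling the solution uniformly up to the nonlocal upper boundary $y_2=0$ and then propagating decay upward: the Dirichlet-to-Neumann operator $DN$ is nonlocal, so the clean separation-of-variables / Saint-Venant estimates available for a flat channel are unavailable, and one must instead combine the induction relation \eqref{induction} with the explicit Poisson-kernel decay. Making the rate in $\na v(\cdot,y_2)\to 0$ quantitative enough to close the ergodic averaging argument — in particular handling the slow $|y_2|^{-1}$ decay of some kernel components and showing the dangerous terms cancel against the incompressibility constraint — is the technically delicate point; this is exactly where the argument of \cite{GerMas} must be followed carefully and adapted to the symmetric-gradient (Navier) setting using (H'').
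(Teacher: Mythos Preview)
Your core strategy --- Poisson-kernel representation in $\{y_2>0\}$, stationarity of the trace $v_0 = v\vert_{y_2=0}$ via uniqueness and translation invariance, then the ergodic theorem --- is exactly the right one, and is what the paper does (by reference to \cite{Basson:2007}). However, you are manufacturing obstacles that do not exist.

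The key point, which the paper states explicitly, is that \emph{the condition at the rough boundary plays no role} in this proposition. Once well-posedness is known, the solution for $y_2>0$ is given entirely by the Poisson formula \eqref{poisson} applied to the stationary trace $v_0 \in H^{1/2}_{uloc}(\R)$; nothing about $\Gamma$, the Navier condition, the $DN$ operator, the induction relation \eqref{induction}, or assumption (H'') enters. Those ingredients were needed for the well-posedness theorem and are already consumed. Your final paragraph, which frames the nonlocal $DN$ operator and the adaptation to the symmetric-gradient setting as ``the technically delicate point'', is therefore misdirected: the argument of \cite{Basson:2007} (not \cite{GerMas}) applies verbatim, since it uses only that $v_0$ is a bounded stationary process and the explicit form of $G$. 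Concretely, after an integration by parts one writes $v(y)-(\alpha,0)$ and $\na v(y)$ as integrals of $\pa_{y_1}G$ against the ergodic averages $\frac{1}{t}\int_0^t (v_0(y_1-s)-(\alpha,0))\,ds$; Birkhoff's theorem makes these averages small, and the kernel decay does the rest. The identification $\beta=0$ for the vertical component also follows directly from the Poisson representation (the second row of $G$ has vanishing integral in $y_1$), without any flux argument over large boxes.
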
 
This proposition is based on the integral representation
\eqref{poisson}, and the ergodic theorem. It has been proved in
\cite{Basson:2007} (the condition at the rough boundary does not play any role). 

\begin{remark}
It is possible to  derive upper and lower bounds for the ``slip length'' $\alpha$. Such bounds were established by Achdou et al.  \cite{Achdou:1998a} in the periodic setting. They  are still valid in the random stationary  case. Along the lines of \cite{Achdou:1998}, one can prove: {\em for $\om\in\mathcal C^2(\R) \cap W^{2,\infty}(\R)$, 
$$ -Y_{max}\leq \alpha\leq -Y_{min}, \quad  Y_{min}:=\inf_\R \om, \quad Y_{max}:=\sup_\R \om. $$}
In order not to burden the paper, we skip the proof. 
\end{remark}

\section{Estimates for the Navier condition} \label{sec4}

This section is devoted to the proof of Theorem \ref{Navestimates2}. The main novelty lies in the derivation of almost sure  estimates. Indeed, to our knowledge, the previous convergence results dealing with a stationary ergodic setting were all stated in a norm involving an expectation (see \cite{Basson:2007,DGV:2008}). The main steps of the proof are the same as in \cite{GerMas,Basson:2007}: the idea is to build an approximate solution which consists of the main term $u^0$, the boundary layer corrector $v$, and two additional correctors $u^1$ and $r^\eps$. We will review briefly the definition and well-posedness of $u^1$ and $r^\eps$, and focus on the estimates which are required for the proof of Theorem \ref{Navestimates2}.

$\bullet$ We start with some regularity estimates for the function $v$ which solves the boundary layer problem \eqref{BL}:

\begin{lemma}
Let $\beta\in \N^2$ be arbitrary, and let $v$ be the solution of \eqref{BL}. Then for all $a>0$, there exists a constant $C$, depending only on the Lipschitz constant of $\omega$, on $\beta$ and on $a$, such that
$$
\sup_{k\in \Z}\int_k^{k+1}\int_a^\infty\left|\nabla^\beta\nabla v\right|^2\leq C.
$$
In particular, $v \in L^\infty(\R\times (a,\infty))$ for all $a>0$..
\label{lem:regul_est}

\end{lemma}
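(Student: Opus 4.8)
The plan is to reduce everything to the Poisson integral representation of $v$ in the flat upper half-plane $\{y_2>0\}$. This is legitimate because $a>0$ and $\omega<0$, so that $\R\times(a,\infty)\subset\{y_2>0\}\subset\Omega^{bl}$, and moreover every point of $\R\times(a,\infty)$ stays at Euclidean distance $\geq a$ from the line $\{y_2=0\}$. First I would observe, using Proposition~\ref{propertiesstokes} (items i) and iii)), that the restriction of $v$ to $\{y_2>0\}$ is the unique $H^1_{uloc}$-solution of the half-plane Stokes problem \eqref{Stokes} with datum $v_0:=v\vert_{\{y_2=0\}}\in H^{1/2}_{uloc}(\R)$, hence
$$ v(y)\:=\:\int_\R G(y_1-t,y_2)\,v_0(t)\,dt,\qquad y_2>0, $$
with $G$ the Poisson kernel of \eqref{poisson}. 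Moreover, as the proof of the well-posedness result of Section~\ref{sec3} shows (via the truncated energy estimates), $\|v\|_{H^1_{uloc}(\Omega^{bl,-})}$, and therefore $\|v_0\|_{L^2_{uloc}(\R)}$ by the trace theorem, is bounded by a constant depending only on the Lipschitz constant of $\omega$.

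Next I would record the size of $G$ and of its derivatives. The kernel $G$ is smooth on $\overline{\R^2_+}\setminus\{0\}$ and positively homogeneous of degree $-1$; from the explicit formula one has the improved pointwise bound $|G(s,y_2)|\le C\,y_2\,(s^2+y_2^2)^{-1}$, while any derivative satisfies $|\nabla^\gamma G(s,y_2)|\le C_\gamma\,(s^2+y_2^2)^{-(1+|\gamma|)/2}$ on $\{y_2\ge0\}$. Splitting $\R$ into unit intervals and applying Cauchy--Schwarz against $v_0\in L^2_{uloc}$, the first bound gives $|v(y)|\le C\,\|v_0\|_{L^2_{uloc}}\sum_{j\in\Z}\|G(y_1-\cdot,y_2)\|_{L^2([j,j+1])}\le C\,\|v_0\|_{L^2_{uloc}}$ uniformly in $y_1$ and in $y_2>0$ (the $O(y_2)$ intervals near $t=y_1$ contribute $O(y_2)\cdot O(1/y_2)$, and the tail is controlled by $y_2\sum_{m>y_2}m^{-2}$). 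Hence $v\in L^\infty(\{y_2>0\})$, which yields the last assertion of the lemma. For the derivatives, the bounds on $\nabla^\gamma G$ with $|\gamma|\ge1$ dominate a function integrable against $|v_0|$ locally uniformly in $\{y_2\ge a\}$ (this is again the uniform summability of the unit-interval $L^2$-norms), so one may differentiate under the integral sign to obtain $\nabla^\beta\nabla v(y)=\int_\R(\nabla^\beta\nabla G)(y_1-t,y_2)\,v_0(t)\,dt$ for $y_2>0$. Repeating the unit-interval splitting with the kernel $\nabla^\beta\nabla G$, which carries $|\beta|+1$ derivatives and hence decays like $(s^2+y_2^2)^{-1-|\beta|/2}$, one gets the pointwise estimate
$$ |\nabla^\beta\nabla v(y)|\:\le\: C_{\beta}\,\|v_0\|_{L^2_{uloc}}\,(1+y_2)^{-1-|\beta|},\qquad y_2\ge a . $$
Squaring this bound, integrating $\int_a^\infty(1+y_2)^{-2-2|\beta|}\,dy_2<\infty$ in the vertical variable, and then over one horizontal period, gives exactly $\sup_{k\in\Z}\int_k^{k+1}\int_a^\infty|\nabla^\beta\nabla v|^2\le C$, with $C$ depending only on $\beta$, on $a$, and — through $\|v_0\|_{L^2_{uloc}}$ — on the Lipschitz constant of $\omega$.

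No part of this is deep; the points that deserve care are: (i) keeping track of the algebraic decay in $y_2$ of $G$ and of its derivatives, since a bare $L^\infty$ bound on $\nabla^\beta\nabla v$ would not make the per-period $L^2$-norm over $\R\times(a,\infty)$ finite; (ii) justifying the differentiation under the integral sign when $v_0$ belongs only to the local space $H^{1/2}_{uloc}(\R)$, which is handled by the uniform summability of $\sum_j\|\nabla^\gamma G(y_1-\cdot,y_2)\|_{L^2([j,j+1])}$ established above; and (iii) checking that the a priori bound on $v$ coming from the well-posedness analysis of \eqref{BL} depends only on the Lipschitz constant of $\omega$, as the statement of the lemma requires.
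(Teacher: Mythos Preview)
Your approach is essentially the same as the paper's: use the Poisson representation \eqref{poisson} in the upper half-plane, differentiate under the integral sign, and exploit the homogeneity of $G$ to control the kernel. The paper bounds $|\nabla^\beta\nabla G(t,y_2)|$ crudely by $C_\beta(t^2+y_2^2)^{-1}$ and estimates the $L^2$-integral directly via Cauchy--Schwarz in $t$, whereas you keep the sharper decay $(s^2+y_2^2)^{-1-|\beta|/2}$ and first derive a pointwise bound $|\nabla^\beta\nabla v(y)|\le C_\beta\|v_0\|_{L^2_{uloc}}\,y_2^{-1-|\beta|}$ before integrating; this is a harmless difference in bookkeeping and in fact yields slightly more information.

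One small slip: your claim that $|v(y)|\le C\|v_0\|_{L^2_{uloc}}$ holds ``uniformly in $y_1$ and in $y_2>0$'' is not quite justified by the unit-interval splitting, since for $y_2\ll 1$ the single interval containing $y_1$ gives $\|G(y_1-\cdot,y_2)\|_{L^2}\sim y_2^{-1/2}$. This does not matter for the lemma, which only asserts $v\in L^\infty(\R\times(a,\infty))$ for $a>0$, and your argument is fine there; just restrict the uniformity claim to $y_2\ge a$.
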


\begin{proof} 
The arguments are the same as in \cite[Proposition 6]{GerMas}. According to Proposition \ref{propertiesstokes}, $\nabla v \in L^2_{uloc}(\Omega^{bl})$, and $v_0=v_{|\Sigma}\in L^2_{uloc}(\R)$. Since $v$ is given by the representation formula \eqref{poisson} in the upper-half plane, by differentiating under the integral sign in \eqref{poisson}, we obtain
\begin{eqnarray*}
\int_k^{k+1}\int_a^\infty\left|\nabla^\beta\nabla v\right|^2&\leq&\int_k^{k+1}\int_a^\infty\left| \int_{\R} \left| \nabla^\beta\nabla vG (t,y_2) \right|\; \left| v_0(y_1-t)  \right| dt \right|^2dy_1dy_2\\
&\leq&C_\beta  \int_k^{k+1}\int_a^\infty\left| \int_{\R}\frac{1}{t^2 +y_2^2} \left| v_0(y_1-t)  \right| dt \right|^2dy_1dy_2\\
&\leq& C_\beta \int_a^\infty\left(\int_\R\frac{dt}{t^2 + y_2^2}\right)\int_k^{k+1}\int_\R\frac{|v_0(y_1-t)|^2}{t^2+y_2^2}dt\: dy_1 \:dy_2\\
&\leq &C_\beta \|v_0\|_{L^2_\text{uloc}(\R)}\int_a^\infty\int_\R \frac{1}{y_2}\:\frac{1}{t^2+y_2^2}dt\:dy_2 \: \leq \: C_{\beta,a}\|v_0\|_{L^2_\text{uloc}(\R)}.
\end{eqnarray*}

\end{proof}

$\bullet$ We now prove the following result, which is crucial with regards to the derivation of almost sure estimates, and which is the main novelty of this section:

\begin{proposition}

Let $v$ be the solution of the boundary layer system \eqref{BL}. Then the following estimates hold almost surely as $\eps\to 0$:
$$
\begin{aligned}
\sup_{R\geq 1}\frac{1}{R^{1/2}}\left\| v(\cdot/\eps) - (\alpha,0) \right\|_{L^2(\Omega_R)}=o(1),\\
 \sup_{R\geq 1}\frac{1}{R^{1/2}}\left(\left\|\int_0^{x_1} v_2\left(\frac{x_1'}{\eps}, \frac{1}{\eps}\right)dx_1'\right\|_{H^3(-R,R)} + \left\| v_1 \left(\frac{x_1}{\eps}, \frac{1}{\eps}\right)-\alpha \right\|_{H^3(-R,R)}\right)=o(1).
\end{aligned}
$$
\label{prop:as_est}

\end{proposition}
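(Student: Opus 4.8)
\emph{Strategy.} All four quantities in the statement are of the same nature once one rescales $y=x/\eps$ --- so the height $y_2$ ranges over $(0,1/\eps)$ --- and inserts the Poisson representation \eqref{poisson}. Writing $T=R/\eps$, so that $R\ge1$ amounts to $T\ge1/\eps$, the plan is to reduce each of them, up to a deterministic $O(\eps)$ remainder, to the quantity
$$ {\cal A}_\eps(m)\ :=\ \sup_{T\,\ge\,1/\eps}\ \frac{1}{2T}\int_{-T}^{T} g_\eps(\tau_{y_1}m)\,dy_1 $$
for a suitable nonnegative stationary field $g_\eps$, and then to prove ${\cal A}_\eps\to0$ almost surely as $\eps\to0$. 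For the $L^2(\Omega_R)$-term one takes $g_\eps(m)=\eps\int_1^{1/\eps}|v(0,y_2,m)-(\alpha,0)|^2\,dy_2$ --- the part $y_2\in(0,1)$ being handled deterministically by the $H^1_{uloc}$ bound of Section \ref{sec3}, which makes it $O(\eps)$ uniformly in $R$; for the $L^2$-norm of $v_1(\cdot/\eps,1/\eps)-\alpha$ one takes $g_\eps(m)=|v_1(0,1/\eps,m)-\alpha|^2$; and for the higher derivatives in the $H^3$-norm one differentiates \eqref{poisson}, observing that $j$ horizontal derivatives produce a kernel of total weight $O(y_2^{-j})$ and vanishing integral, so the factor $\eps^{-j}$ from rescaling a derivative is exactly absorbed and one may take $g_\eps(m)=|(1/\eps)^j\,\partial_1^j v_1(0,1/\eps,m)|^2$. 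In each case $g_\eps$ is bounded uniformly in $\eps$ and a.s.\ in $m$ (by the arguments of Lemma \ref{lem:regul_est}, the constants depending only on the Lipschitz norm of $\omega$), and $E[g_\eps]\to0$ as $\eps\to0$ by the $L^2(M)$ convergence of Proposition \ref{prop:ergo} --- for the $L^2(\Omega_R)$-term combined with a Cesàro average in $y_2$, since $y_2\mapsto E[|v(0,y_2)-(\alpha,0)|^2]$ is bounded and tends to $0$.

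\emph{From the pattern to an almost sure bound.} Whenever $g_\eps\ge0$ is bounded by a constant $C$ (a.s.\ in $m$, uniformly in $\eps$) and $E[g_\eps]\to0$, the conclusion ${\cal A}_\eps\to0$ a.s.\ follows from a maximal ergodic inequality for the flow $(\tau_t)$:
$$ \mu\left(\left\{\,m:\ \sup_{T\ge1}\frac{1}{2T}\int_{-T}^{T}g_\eps(\tau_{y_1}m)\,dy_1\ >\ \lambda\,\right\}\right)\ \le\ \frac{C'\,E[g_\eps]}{\lambda}\qquad(\lambda>0). $$
Choosing $\eps_n\downarrow0$ sparse enough that $E[g_{\eps_n}]\le2^{-n}$ and applying Borel--Cantelli, one gets $\limsup_n{\cal A}_{\eps_n}\le\lambda$ a.s.; letting $\lambda\downarrow0$ along a countable set gives ${\cal A}_{\eps_n}\to0$ a.s., and a routine interpolation in $\eps$ (using continuity of $\eps\mapsto g_\eps$) covers the intermediate values. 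The antiderivative term needs one algebraic preliminary before it fits this pattern: a flux computation on $\{0<y_1<\xi,\ \omega(y_1)<y_2<1/\eps\}$, using $\div v=0$ and the boundary condition $v\cdot\nu=-(y_2,0)\cdot\nu$ on $\Gamma$, yields $\int_0^{y_1}v_2(t,1/\eps)\,dt=g(m)-g(\tau_{y_1}m)+O(1)$ with $g(m):=\int_1^{1/\eps}(v_1(0,s,m)-\alpha)\,ds$; since $\eps^2|g|^2$ is bounded (using $|v_1(0,s)-\alpha|\le C$ for $s\ge1$) and has vanishing expectation as $\eps\to0$ (a Cesàro argument in $s$, together with $E[v_0-(\alpha,0)]=0$), one concludes as above for it and each of its first three derivatives, the $\eps^2$ gain compensating the quadratic growth of the primitive.

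\emph{Main obstacle.} The expectation versions of all these bounds are one-line consequences of stationarity in $y_1$ and Fubini --- this is essentially the estimate of \cite{Basson:2007}. The real difficulty, and the point of the proposition, is the uniformity in $R$ of the almost sure statement, i.e.\ the control of the maximal quantity ${\cal A}_\eps$ whose supremum runs over the unbounded, $\eps$-dependent family of averaging lengths $T=R/\eps\in[1/\eps,\infty)$, while the only almost sure input at hand, Proposition \ref{prop:ergo}, gives convergence that is merely locally uniform in $y_1$ and carries no rate. This is precisely why a maximal ergodic inequality --- rather than the pointwise ergodic theorem together with Markov's inequality, which would only control each fixed $T$ and lose summability when summed over $T$ --- is needed, and why the double passage $\eps\to0$, $R\to\infty$ must be organized through a Borel--Cantelli argument along a sparse $\eps$-sequence, with the integrands $g_\eps$ kept uniformly bounded through Lemma \ref{lem:regul_est} and, for the antiderivative, through the flux identity above.
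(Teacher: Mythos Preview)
Your approach is viable but genuinely different from the paper's. The paper never invokes a maximal ergodic inequality or Borel--Cantelli; instead it uses an idea of Souganidis. For each $\delta>0$, Egorov's theorem applied to the a.s.\ convergence $v(0,y_2,m)\to(\alpha,0)$ produces a set $M_\delta$ with $\mu(M_\delta^c)\le\delta$ on which the convergence is uniform for $y_2>y_\delta$. Birkhoff's theorem, applied to $\mathbf 1_{M_\delta}$, then shows that for a.e.\ $m$ and all large $k$ the set $A_\delta=\{y_1:\tau_{y_1}m\in M_\delta\}$ has density at least $1-2\delta$ in $(-k,k)$. The integral over $\Omega_R$ is split into a near-boundary part $y_2<y_\delta$ (giving $O(\eps)$ via the $H^1_{uloc}$ bound), the part where $\tau_{y_1}m\in M_\delta$ (giving $O(\delta^2)$), and the part where $\tau_{y_1}m\notin M_\delta$ (giving $O(\delta)$ using the $L^\infty$ bound from Lemma~\ref{lem:regul_est} and the density estimate). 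This handles all small $\eps$ at once, with no subsequence extraction. Your route replaces Egorov by the maximal inequality; both approaches rely in the same essential way on the deterministic $L^\infty$ bounds to control the exceptional set.

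Two steps in your sketch need more care. First, the ``routine interpolation using continuity of $\eps\mapsto g_\eps$'' does not work as stated: continuity of $g_\eps$ in $\eps$ gives no control on $\sup_T\frac{1}{2T}\int g_\eps(\tau_{y_1}m)\,dy_1$ between the Borel--Cantelli points, since your sparse subsequence may have unbounded ratios $\eps_n/\eps_{n+1}$. What works is to pass to a monotone majorant, e.g.\ $h_S(m):=\sup_{s\ge S}|v(0,s,m)-(\alpha,0)|^2$ for the trace term (and the tail-sup of Ces\`aro averages for the $L^2(\Omega_R)$ term); then $M^*h_{1/\eps}$ is monotone in $\eps$, and convergence in probability from the maximal inequality upgrades to a.s.\ convergence directly. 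Second, the claim ``$E[g_\eps]\to0$ by Proposition~\ref{prop:ergo}'' is not immediate for the derivative terms: that proposition concerns $v$, not $y_2^{\,j}\partial_1^{\,j}v$, and the naive kernel bound together with stationarity only gives $\|y_2^{\,j}\partial_1^{\,j}v(0,y_2)\|_{L^2(M)}\le C\|v_0-(\alpha,0)\|_{L^2(M)}$, a constant. One fix is to rewrite $\partial_1^{\,j}v(\cdot,y_2)$ via the Poisson kernel at height $y_2/2$ acting on $v(\cdot,y_2/2)-(\alpha,0)$, whose $L^2(M)$ norm does vanish; the paper instead integrates by parts once more in \eqref{poisson} so as to expose the ergodic averages $\frac{1}{y_1'}\int_0^{y_1'}(v_0(-z,m)-(\alpha,0))\,dz$, and obtains the pointwise statement $y_2^{\,j}\partial_1^{\,j}v(0,y_2,m)\to0$ a.s.
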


\begin{remark}
This result combines two main ingredients: the deterministic construction of the preceding section, which eventually led to Lemma \ref{lem:regul_est}, and the almost sure convergence of the corrector $v$ in the stationary ergodic setting (see Proposition \ref{prop:ergo}).	We emphasize that both items are important here. In particular, it does not seem possible to prove almost sure estimates by using a probabilistic construction of the boundary layer as in \cite{Basson:2007}.

\end{remark}

\begin{proof}
 
We use an idea developed by Souganidis (see \cite{Souganidis}). Let $\delta>0$ be arbitrary. Then, according to Egorov's Theorem, there exists a measurable set $M_\delta\subset M$ and a number $y_\delta>0$ such that
$$
\begin{aligned}
\left| v(0,y_2,m)-(\alpha,0)\right|\leq \delta\quad \forall m\in M_\delta,\ \forall y_2>y_\delta,\\
P(M_\delta^c)\leq \delta.
\end{aligned}
$$
Without loss of generality, we assume that $y_\delta\geq 1$.

Now, according to Birkhoff's ergodic Theorem, for almost every $m$ there exists $k_\delta>0$ such that if $k> k_\delta$,
$$
A_\delta \: = \: A_\delta(m) \: := \: \left\{ y_1\in \R, \tau_{y_1}m \in M_\delta \right\} \quad \mbox{ satisfies:} \quad   \left| A_\delta \cap (-k,k) \right|\geq 2k (1-2\delta).
$$

For all $R\geq 1,\  \eps>0$, we have
\begin{eqnarray*}
R^{-1} \left\| v(\cdot/\eps) - (\alpha,0) \right\|_{L^2(\Omega_R)}^2&=&\frac{\eps^2}{R}\int_{-R/\eps}^{R/\eps}\int_{\omega(y_1)}^{1/\eps}\left|v(y,m)-(\alpha,0)  \right|^2\:dy\\
&=&\frac{\eps^2}{R}\int_{-R/\eps}^{R/\eps}\int_{\omega(y_1)}^{1/\eps}\left|v(0,y_2,\tau_{y_1}m)-(\alpha,0)  \right|^2\:dy\\
&=&\frac{\eps^2}{R}\int_{-R/\eps}^{R/\eps}\int_{\omega(y_1)}^{y_\delta}\left|v(0,y_2,\tau_{y_1}m)-(\alpha,0)  \right|^2\:dy\\
&+&\frac{\eps^2}{R}\int_{-R/\eps}^{R/\eps}\int_{y_\delta}^{1/\eps}\mathbf 1_{ \tau_{y_1}m \in M_\delta}\left|v(0,y_2,\tau_{y_1}m)-(\alpha,0)  \right|^2\:dy\\
&+& \frac{\eps^2}{R}\int_{-R/\eps}^{R/\eps}\int_{y_\delta}^{1/\eps}\mathbf 1_{ \tau_{y_1}m \in M_\delta^c}\left|v(0,y_2,\tau_{y_1}m)-(\alpha,0)  \right|^2\:dy\\
&=&\sum_{j=1}^3 I_j
\end{eqnarray*}
We have clearly, by definition of $A_\delta$,
$$
\begin{aligned}
I_1\leq  \eps \sup_{R'\geq 1}\frac{1}{R'}\left\| v(y)-(\alpha,0) \right\|^2_{L^2(((-R',R')\times(-1,y_\delta))\cap \Omega^{bl})}\leq C_\delta \eps,\\
I_2\leq \frac{\eps^2}{R}\frac{2R}{\eps}\frac{1}{\eps}\delta^2\leq 2 \delta^2.
\end{aligned}
$$
Notice that the constant $C_\delta$ in the first inequality depends on the random parameter $m$.

As for the third integral, recall that $v\in L^\infty(\R\times(1,\infty))$ according to Lemma \ref{lem:regul_est} and that $y_\delta\geq 1$. Thus we have, for all $R\geq 1$ and if $\eps<1/k_\delta$,
$$
I_3\leq 4\delta \left( |\alpha|^2 + \| v\|_{L^\infty(\R\times(1,\infty))}^2 \right).
$$
Gathering the three terms, we deduce that the first estimate of the Lemma holds true.

Concerning the second estimate, we define the vector field $u(y)=v(y)+(y_2, 0)$. Notice that $u$ is divergence free and that $u\cdot \nu=0$ at the lower boundary of $\Omega^{bl}$. Consequently,
following \cite{Basson:2007}, Proposition 14, we write
\begin{eqnarray}
 \int_0^{x_1} v_2\left(\frac{x_1'}{\eps}, \frac{1}{\eps}  \right)dx_1'&=&\eps \int_0^{x_1/\eps}u_2\left(y_1, \frac{1}{\eps}  \right)dy_1\nonumber\\
&=&\eps\int_{\omega(x_1/\eps)}^{1/\eps}u_1\left(\frac{x_1}{\eps},y_2\right)dy_2-\eps\int_{\omega(0)}^{1/\eps}u_1\left(0,y_2\right)dy_2\nonumber\\
&=&\eps\int_{\omega(x_1/\eps)}^{1/\eps}(v_1-\alpha)\left(\frac{x_1}{\eps},y_2\right)dy_2-\eps\int_{\omega(0)}^{1/\eps}(v_1-\alpha)\left(0,y_2\right)dy_2\label{in:v2}\\
&-&\eps\left[\frac{\omega^2\left(\frac{x_1}{\eps}  \right) - \omega^2(0)}2 + \alpha \left(\omega\left(\frac{x_1}{\eps }\right) - \omega(0)\right)\right].\nonumber
\end{eqnarray}
The last term is bounded in $L^\infty$ by $\eps(\|\omega\|_\infty^2 +2 \|\omega\|_\infty)$, and thus converges towards zero in the appropriate norm. As for the other two terms, set
$$
U^\eps(y_1):= \eps \int_{\omega(y_1)}^{1/\eps} (v_1-\alpha)(y_1, y_2) dy_2.
$$
Using the same decomposition as previously, we write, for $\delta>0$ arbitrary,
\begin{eqnarray*}
U^\eps(y_1)&=& \eps \int_{\omega(y_1)}^{y_\delta }(v_1-\alpha) + \eps \mathbf 1_{\tau_{y_1}\omega\in A_\delta} \int_{y_\delta}^{1/\eps}(v_1-\alpha) + \eps \mathbf 1_{\tau_{y_1}\omega\in A_\delta^c} \int_{y_\delta}^{1/\eps}(v_1-\alpha),
\end{eqnarray*}
and thus
$$
|U^\eps(y_1)|\leq \eps \int_{\omega(y_1)}^{y_\delta }|v_1-\alpha| + \delta + \|v_1-\alpha\|_{L^\infty(\R\times(1,\infty))}\mathbf 1_{\tau_{y_1}\omega\in A_\delta^c }.
$$
Consequently, we obtain, for all $R\geq 1$ and for $\eps$ small enough (depending on $\delta$),
\begin{eqnarray*}
\frac{1}{R}\int_{-R}^R\left| U^\eps\left(\frac{x_1}{\eps} \right) \right|^2dx_1&\leq&C\frac{\eps^3}{R}(y_\delta-\inf\om)\int_{-R/\eps}^{R/\eps}\int_{\omega(y_1) }^{y_\delta}|v_1-\alpha|^2\\
&&+C\left( \delta^2 + 4\delta \|v_1-\alpha\|_{L^\infty(\R\times(1,\infty))}^2\right)\\
&\leq & C_\delta \eps^2 + C \delta.
\end{eqnarray*}
Hence, as $\eps$ vanishes,
$$
\sup_{R\geq 1} \frac{1}{R}\int_{-R}^R\left| U^\eps\left(\frac{x_1}{\eps}\right)  \right|^2dx_1 =o(1).
$$

The second  term in \eqref{in:v2} is easily treated: since it does not depend on $x_1$, we have
\begin{eqnarray*}
&&\sup_{R\geq 1} \frac{1}{R}\int_{-R}^R\left| \int_{\omega(0)}^{1/\eps}(v_1-\alpha)\left(0,y_2\right)dy_2  \right|^2dx_1\\
&=&\left| \int_{\omega(0)}^{1/\eps}(v_1-\alpha)\left(0,y_2\right)dy_2  \right|^2 .
\end{eqnarray*}
Since $(v_1-\alpha)(0, y_2)$ vanishes almost surely as $y_2\to \infty$, we infer that
$$
\lim_{\eps\to 0} \eps\int_{\omega(0)}^{1/\eps}(v_1-\alpha)\left(0,y_2\right)dy_2=0\quad \text{a.s.}
$$
This proves that
$$
\sup_{R\geq 1}\frac{1}{R^{1/2}}\left\|\int_0^{x_1} v_2\left(\frac{x_1'}{\eps}, \frac{1}{\eps}\right)dx_1'\right\|_{L^2(-R,R)} =o(1)
$$
as $\eps$ vanishes.

The estimates
$$
\begin{aligned}
 \sup_{R\geq 1}\frac{1}{R^{1/2}}\left\|v_2\left(\frac{x_1}{\eps}, \frac{1}{\eps}\right)\right\|_{L^2(-R,R)}=o(1),\\
 \sup_{R\geq 1}\frac{1}{R^{1/2}} \left\| v_1 \left(\frac{x_1}{\eps}, \frac{1}{\eps}\right)-\alpha \right\|_{L^2(-R,R)}=o(1)
\end{aligned}
$$
are derived in a similar fashion. There remains to prove that for $1\leq k\leq 3$
$$
  \sup_{R\geq 1}\frac{1}{R^{1/2}}\eps^{-k}\left\| (\pa_{y_1}^k v)\left(\frac{x_1}{\eps}, \frac{1}{\eps}\right)\right\|_{L^2(-R,R)}=o(1).
$$

Notice that it suffices to prove that for $k=1,2,3$,
\be\label{conv_derivees}
\lim_{y_2\to\infty} y_2^{k}\pa_{y_1}^k v(0,y_2, m)=0\quad\text{almost surely for }m\in M.
\ee
Then the same arguments as above allow us to conclude.

In order to obtain \eqref{conv_derivees}, we use the same estimates as in \cite{Basson:2007}, Proposition 13. We write
$$
\pa_{y_1}^k v(0,y_2, m)=\int_\R y_1'\pa_{y_1}^{k+1} G(-y_1',y_2)\left(\frac{1}{y_1'}\int_0^{y_1'}( v_0(-z, m) - (\alpha,0))dz\right)dy_1'.
$$
Since $G$ is homogeneous of degree $-1$, it can be easily proved that for $k\geq 1$,
$$
\begin{aligned}
\left|\pa_{y_1}^{k} G(y_1,y_2) \right|\leq C_k  (y_1^2 + y_2^2)^{-\frac{k+1}{2}},\\
\left|y_1\pa_{y_1}^{k+1} G(y_1,y_2) \right|\leq C_k  (y_1^2 + y_2^2)^{-\frac{k+1}{2}}.
\end{aligned}
$$
Now, let $\delta>0$ be arbitrary. Almost surely, there exists $y_\delta>0$ (depending on $m$) such that
$$
\left|\frac{1}{y_1}\int_0^{y_1}(v_0(z,m)-(\alpha, 0))dz\right|\leq \delta\quad\text{if }|y_1|\geq y_\delta.
$$
As a consequence,
\begin{eqnarray*}
&&\left|\int_{|y_1'|\geq y_\delta}y_1'\pa_{y_1}^{k+1} G(-y_1',y_2)\left(\frac{1}{y_1'}\int_0^{y_1'}( v_0(-z, \omega) - (\alpha,0))dz\right)dy_1'\right|\\ 
&\leq&C_k\delta\int_{|y_1|\geq y_\delta}(y_1^2 + y_2^2)^{-\frac{k+1}{2}}dy_1\\
&\leq&C_k\frac{\delta}{y_2^k}.
\end{eqnarray*}
On the other hand, 
\begin{eqnarray*}
 &&\left|\int_{|y_1'|\leq y_\delta}y_1'\pa_{y_1}^{k+1} G(-y_1',y_2)\left(\frac{1}{y_1'}\int_0^{y_1'}( v_0(-z, m) - (\alpha,0))dz\right)dy_1'\right|\\ 
&\leq&C_k \int_0^{y_\delta}\left| ( v_0(-z, m) - (\alpha,0)) \right|dz \int_{|y_1|\leq y_\delta}   (y_1^2 + y_2^2)^{-\frac{k+2}{2}}dy_1\\
&\leq &C_\delta \frac{1}{y_2^{k+1}}.
\end{eqnarray*}
Gathering the two terms, we infer that for all $\delta>0$, there exists $C_\delta>0$ such that
$$
\left| y_2^k\pa_{y_1}^k v(0,y_2, m) \right|\leq \delta + \frac{C_\delta}{y_2}\quad \forall y_2>0,
$$
and thus the quantity in the left-hand side vanishes almost surely as $y_2\to\infty.$

\end{proof}

$\bullet$ We are now ready to prove the convergence result stated in Theorem \ref{Navestimates2}. Following \cite{GerMas}, we set 
$$
\ueapp(x):=u^0(x)+6\phi\eps v\left(\frac{x}{\eps}  \right) + \eps u^1 (x) + \eps \re(x) + 6\phi \eps^2 v^1\left(\frac{x}{\eps}  \right)
,
$$
where the correctors $u^1$ and $\re$ ensure that $\ueapp$ satisfies the Dirichlet boundary condition at the upper boundary and the zero flux condition. The term $v^1$ is a boundary layer term which compensates the tangential trace of $u^0+6\phi \eps v$ at the rough boundary\footnote{It can be checked that this extra boundary layer term is needed only when the original slip length $\lambda^\eps$ is such that $\lambda \lesssim 1.$}.   
Additionnally, $u^1$ is intended to be $O(1)$ while $\re=o(1)$. 

$\vartriangleright$ We choose $u^1$ to be the solution of
$$
\left\{
\begin{array}{l}
 -\Delta u^1 + u^0\cdot \nabla u^1 + u^1\cdot \nabla u^0+\nabla p^1=0, \quad x\in \Omega,\\
\nabla\cdot u^1=0, \quad x\in \Omega,\\
u^1_{|x_2=0}=0,\quad u^1_{|x_2=1}=(-6\phi\alpha,0),\\
\int_\sigma u^1_1=-6\phi\alpha.
\end{array}
\right.
$$
Notice that we assume that $u^1$ satisfies a no-slip condition at the lower boundary. This stems from the non-degeneracy of the frontier $\Gae$: in order that the non-penetration condition is satisfied at order $\eps$, $u^1$ must vanish at $x_2=0$. We recall that the same argument led to the no-slip condition for $u^0$ at $x_2=0$.

Hence the vector field $u^1$ is exactly the same as in \cite{GerMas} and is a combination of Couette and Poiseuille flows:
$$
u^1_1(x)=6\phi(-4\alpha x_2+3\alpha x_2^2),\quad u^1_2(x)=0,\quad x\in\Omega
$$
and we extend $u^1$ by zero outside $\Om$.

$\vartriangleright$ The additional boundary term $v^1$ solves the system
$$
\left\{
\begin{array}{l}
-\Delta v^1 + \na q_1=0 \quad\text{in } \Om^{bl} \\
v^1\cdot \nu=(y_2^2, 0)\cdot\nu \quad \text{on } \Gamma, \\ 
\lambda^0(D(v^1)\nu)_\tau=(v+(y_2, 0))_\tau+ \lambda^0(D((y_2^2, 0))\nu)_\tau\:\: \text{on } \Gamma, 
\end{array} 
\right.
$$
under the condition 
$$
\sup_{k\in\Z}\int_{\Om^{bl}_{k,k+1}}|\na v^1|^2.
$$
Using the same techniques as in Section \ref{sec4}, energy estimates in $H^1_{uloc}(\Om^{bl}_- )$ for $v^1$ can be proved, leading to existence and uniqueness of $v^1$. Additionally, there exists $\beta\in\R$ such that
$$
\lim_{y_2\to\infty}v^1(y_1,y_2)=(\beta, 0)
$$ 
almost surely.

$\vartriangleright$ As for $\re$, we use the following Lemma:
\begin{lemma}
There exists a vector field $\re \in H^2_\text{loc}(\Omega)$ satisfying
$$
\left\{
\begin{array}{l}
\nabla\cdot \re =0,\\
\re_{|x_2=0}=0,\quad \re_{|x_2=1}=6\phi\left[(\alpha, 0) - v\left(\frac{x_1}{\eps},\frac{1}{\eps} \right) - \eps v^1\left(\frac{x_1}{\eps}, \frac{1}{\eps}  \right)\right] ,\\
\int_\sigma \re_1=-\int_{\sigma^\eps\setminus\sigma}u^0_1 - 6\phi\int_{\sigma_\eps} (v_1+ \eps v^1_1)(x/\eps) +6\phi\alpha,
\end{array}
\right.
$$
and such that
$$
\sup_{R\geq 1}\frac{1}{R^{1/2}}\| \re\|_{H^2(\Omega_{-R,R})}=o(1)\text{ and }\|\re\|_{W^{2,\infty}(\Omega)}=O(1).
$$
\label{lem:re}
\end{lemma}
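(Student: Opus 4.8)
\emph{The plan} is to realize $\re$ as the rotated gradient of an explicit stream function, so that incompressibility is built in and the traces are read off directly. I would set $b^\eps(x_1) := 6\phi\bigl[(\alpha,0) - v(x_1/\eps,1/\eps) - \eps v^1(x_1/\eps,1/\eps)\bigr]$ for the prescribed trace on $\{x_2=1\}$, let $B^\eps(x_1) := \int_0^{x_1} b^\eps_2$ with lower endpoint at the location of $\sigma$, and call $c^\eps$ the prescribed value of $\int_\sigma\re_1$. Fixing once and for all $\theta,\eta\in C^\infty([0,1])$ with $\theta(0)=\theta'(0)=\theta'(1)=0$, $\theta(1)=1$ and $\eta(0)=\eta'(0)=\eta(1)=0$, $\eta'(1)=1$, I would define
$$\Psi^\eps(x) := \theta(x_2)\bigl(c^\eps - B^\eps(x_1)\bigr) + \eta(x_2)\,b^\eps_1(x_1), \qquad \re := \nabla^\perp\Psi^\eps = (\pa_2\Psi^\eps,\,-\pa_1\Psi^\eps).$$
Then $\dv\re=0$ identically; a short computation using the endpoint values of $\theta,\eta$ and $B^\eps{}'=b^\eps_2$ gives $\re|_{x_2=0}=0$, $\re|_{x_2=1}=b^\eps$; and, since $B^\eps$ vanishes at the base point, $\int_\sigma\re_1=\Psi^\eps|_{x_2=1}-\Psi^\eps|_{x_2=0}=c^\eps$. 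The membership $\re\in H^2_{\mathrm{loc}}(\Omega)$ amounts to $b^\eps_1\in H^3_{\mathrm{loc}}(\R)$, $b^\eps_2\in H^2_{\mathrm{loc}}(\R)$, which holds because differentiating $v(\cdot/\eps,1/\eps)$ at most three times yields $\eps^{-k}(\pa_{y_1}^k v)(\cdot/\eps,1/\eps)$, $k\le3$, a function in $L^2_{\mathrm{loc}}$ by Lemma~\ref{lem:regul_est} (and the same for $v^1$).

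For the estimates, since $\theta,\eta$ are fixed I would bound $\|\re\|_{H^2(\Omega_{-R,R})}\le C\bigl(|c^\eps|R^{1/2}+\|B^\eps\|_{H^3(-R,R)}+\|b^\eps_1\|_{H^3(-R,R)}\bigr)$, and similarly with the $W^{2,\infty}(\Omega)$ norm of $\re$ controlled by $|c^\eps|$ and the $W^{3,\infty}(\R)$ norms of $B^\eps,b^\eps_1$. Expanding $b^\eps_1=-6\phi\bigl(v_1(\cdot/\eps,1/\eps)-\alpha\bigr)-6\phi\eps\,v^1_1(\cdot/\eps,1/\eps)$ and $B^\eps=-6\phi\int_0^{x_1}v_2(x_1'/\eps,1/\eps)\,dx_1'-6\phi\eps\int_0^{x_1}v^1_2(x_1'/\eps,1/\eps)\,dx_1'$, the $v$-terms in $R^{-1/2}\bigl(\|B^\eps\|_{H^3(-R,R)}+\|b^\eps_1\|_{H^3(-R,R)}\bigr)$ are $o(1)$ \emph{verbatim} by Proposition~\ref{prop:as_est}, while the $\eps v^1$-terms are $o(1)$ by the analogous statement for $v^1$ (which obeys Lemma~\ref{lem:regul_est} and converges a.s. to $(\beta,0)$). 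Finally $c^\eps=o(1)$ a.s.: in $c^\eps=-\int_{\sigma^\eps\setminus\sigma}u^0_1-6\phi\int_{\sigma^\eps}(v_1+\eps v^1_1)(x/\eps)+6\phi\alpha$ the first term is $O(\phi\eps^2)$, $\int_{\sigma^\eps}v_1(x/\eps)-\alpha=\eps\int_{\omega(0)}^{1/\eps}(v_1-\alpha)(0,y_2)\,dy_2+O(\eps)=o(1)$ by the a.s. convergence obtained inside the proof of Proposition~\ref{prop:as_est}, and $\eps\int_{\sigma^\eps}v^1_1(x/\eps)=O(\eps)$. This yields the first estimate.

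For the uniform bound, all derivatives of $b^\eps_1$ and of $B^\eps{}'=b^\eps_2$ up to the required order are $O(\phi)$ once one uses the pointwise decay $|(\pa_{y_1}^k v)(y_1,y_2)|\le C_k y_2^{-k}$, $y_2\ge1$, $k\ge1$ (and likewise for $v^1$), which turns each factor $\eps^{-k}$ into an $O(1)$ term; this decay follows by differentiating the representation \eqref{poisson} and using boundedness of the Cesàro averages of $v_0-(\alpha,0)$, as in \cite[Prop.~13--14]{Basson:2007}, and combined with $v,v^1\in L^\infty(\R\times(1,\infty))$ from Lemma~\ref{lem:regul_est} it controls the $W^{2,\infty}$ norms of $b^\eps_1,b^\eps_2$. \textbf{The one delicate point}, and the main obstacle, is $\|B^\eps\|_{L^\infty(\R)}$: the pointwise bound $b^\eps_2=O(\eps)$ only gives $B^\eps(x_1)=O(\eps|x_1|)$, so I would instead exploit incompressibility. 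With $u=v+(y_2,0)$ and $Y=1/\eps$, integrating $\dv u=0$ over $\{0<y_1<x_1/\eps,\ \omega(y_1)<y_2<Y\}$ gives $\int_0^{x_1/\eps}v_2(y_1,Y)\,dy_1=\int_{\omega(0)}^{Y}u_1(0,\cdot)-\int_{\omega(x_1/\eps)}^{Y}u_1(x_1/\eps,\cdot)$; the $x_1$-independent parts $\alpha Y+Y^2/2$ of $\int_{\omega(\ell)}^Y u_1(\ell,\cdot)$ cancel, leaving a difference of the horizontal fluxes $\int_{\omega(\ell)}^{1/\eps}\bigl(v_1(\ell,\cdot)-\alpha\bigr)$ plus a bounded term, and $|v_1-\alpha|\le C/y_2$ makes each such flux $O(|\log\eps|)$ uniformly in $\ell$. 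Hence $B^\eps(x_1)=O(\phi\,\eps|\log\eps|)$ uniformly in $x_1$, which is both $O(1)$ and $o(1)$; the identity for $v^1-(y_2^2,0)$ treats the $v^1$ part. Assembling these gives $\|\re\|_{W^{2,\infty}(\Omega)}=O(1)$, and the proof is complete.
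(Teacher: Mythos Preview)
Your stream-function construction is precisely the one the paper invokes (it cites Proposition~5.1 of \cite{Basson:2007} together with Proposition~\ref{prop:as_est}), so the approach matches; you have simply written out the details the paper omits. The verification of the traces, the flux condition, and the reduction of the $o(1)$ estimate to the second bound in Proposition~\ref{prop:as_est} are all correct.

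There is one overclaim in your treatment of $\|B^\eps\|_{L^\infty}$. The bound $|v_1(y_1,y_2)-\alpha|\le C/y_2$ \emph{uniformly in $y_1$} does not follow from mere boundedness of the Ces\`aro averages of $v_0-(\alpha,0)$: one integration by parts in the Poisson representation against a bounded Ces\`aro average only yields $|v_1-\alpha|\le C$, which is already contained in Lemma~\ref{lem:regul_est}, and in the random stationary setting a uniform-in-$y_1$ rate of convergence to $(\alpha,0)$ is generally not available. Fortunately you do not need the $1/y_2$ decay. From your flux identity, $\eps\int_{\omega(\ell)}^{1/\eps}(v_1-\alpha)(\ell,\cdot)$ splits as $\eps\int_{\omega(\ell)}^{1}+\eps\int_{1}^{1/\eps}$; the first piece is $O(\eps)$ uniformly in $\ell$ by the uniform $H^1_{uloc}$ bound on $v$ in $\Omega^{bl,-}$ and the trace on the vertical segment $\{y_1=\ell\}$ (the trace constant is uniform because $\omega$ is $K$-Lipschitz almost surely), while the second is at most $\eps\,\|v_1-\alpha\|_{L^\infty(\R\times[1,\infty))}\,(1/\eps-1)=O(1)$. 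Hence $\|B^\eps\|_{L^\infty}=O(\phi)$, which is all that is required. By contrast, for $k\ge1$ your decay claim $|\pa_{y_1}^k v(y_1,y_2)|\le C_k y_2^{-k}$ \emph{is} correct once the Poisson representation is based at height $y_2=1$, where the trace lies in $L^\infty$ by Lemma~\ref{lem:regul_est}, since $\int_\R|\pa_{y_1}^kG(t,y_2)|\,dt\le C_ky_2^{-k}$ and no cancellation is needed. With this minor repair the proof is complete.
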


The Lemma follows directly from Proposition \ref{prop:as_est} and the construction in Proposition 5.1 in \cite{Basson:2007}. Once again, we extend $\re$ by zero outside $\Om$.

By construction, the function $\ueapp$ satisfies
$$\left\{
\begin{array}{l}
\dis -\Delta \ueapp + \ueapp\cdot \nabla \ueapp + \nabla p^\eps_\text{app}=\dv G^\eps + f^\eps\quad\text{in }\Om^\eps\setminus\Sigma,\\
\ueapp\vert_{x_2=1}=0,\\
\ueapp\cdot \nu\vert_{\Gamma^\eps}= 0 ,\\
(\ueapp)_\tau\vert_{\Gamma^\eps}  \:    =  \:  \lambda^0  (  D(\ueapp)\nu)_\tau\vert_{\Gamma^\eps} + g^\eps ,\\
\dis [\ueapp]\vert_{\Sigma}=0,\quad \left[D(\ueapp)e_2 -  p^\eps e_2 \right]\vert_{\Sigma}=D(\eps \re + \eps u^1)e_2\vert_{\Sigma}=:\varphi^\eps
\end{array}
\right.
$$
where
\begin{eqnarray*}
G^\eps&=&  \eps u^0\otimes\left(6\phi\left(v\left(\frac{\cdot}{\eps}  \right)-(\alpha , 0) + \eps v^1 \left(\frac{\cdot}{\eps}  \right)\right)+ \re \right) \\&+& \eps 	\left(6\phi\left(v\left(\frac{\cdot}{\eps}  \right)-(\alpha , 0) + \eps v^1 \left(\frac{\cdot}{\eps}  \right)\right)+ \re \right)\otimes u^0\\
&+& \eps^2 \left(6\phi\left(v\left(\frac{\cdot}{\eps}  \right) + \eps v^1 \left(\frac{\cdot}{\eps}  \right)\right)+ \re \right)\otimes \left(6\phi\left(v\left(\frac{\cdot}{\eps}  \right) + \eps v^1 \left(\frac{\cdot}{\eps}  \right)\right)+ \re \right)
\end{eqnarray*}
and 
$$f^\eps=-\eps \Delta \re, \quad g^\eps=6\phi(\eps^2 v^1(x/\eps) -  (x_2^2, 0))_\tau\vert_{\Gamma^\eps}. $$

According to the estimates of Section \ref{sec3}, Proposition \ref{prop:as_est} and Lemma \ref{lem:re}, we have
$$
\begin{aligned}
\sup_{R\geq 1}\frac{1}{R^{1/2}}\| G^\eps\|_{L^2(\Omega^\eps_R)}=o(\eps),\\
\sup_{R\geq 1}\frac{1}{R^{1/2}}\| f^\eps\|_{L^2(\Omega^\eps_R)}=o(\eps),\\
\sup_{R\geq 1}\frac{1}{R^{1/2}}\|\varphi^\eps\|_{L^2(\Sigma_R)}=o(\eps) + O(\eps\phi),\\
\sup_{k\in\Z}\|g^\eps\|_{L^2(\Gamma^\eps_{k,k+1})}=O(\eps^2).
\end{aligned}
$$

Consequently, setting $\we=\ue-\ueapp$, we obtain
\be\label{eq:we}
 - \Delta \we + (\ue \cdot \nabla)\we + (\we \cdot \nabla)\ueapp + \nabla \qe = -\dv G^\eps - f^\eps\quad \text{in}\Om^\eps\setminus \Sigma,
\ee
and $\we$ satisfies the same boundary and jump conditions as $\ueapp$.

The next step is to derive energy estimates for the above system. The proof goes along the same lines as the one in Section \ref{sec2}, and therefore, we skip the details. The main steps are the following:
\begin{enumerate}
	\item First, we derive an energy estimate in $\Om^\eps_n$ for a sequence $(\we_n)_{n\in\N}$ satisfying \eqref{eq:we} in $\Om^\eps_n$ with homogeneous Dirichlet boundary conditions at $x_1=\pm n$; more precisely, we prove that for $\phi$ small enough,
$$
\int_{\Om^\eps_n} (|D(\we_n)|^2 + |\we_n|^2 + |\na \we_n|^2)= n o(\eps^2).
$$

	\item By induction on $k$, we prove that for all $n\geq 1$, $k\in\{1,\cdots,  n-1\}$,
$$
\int_{\Om^\eps_k} (|D(\we_n)|^2 + |\we_n|^2 + |\na \we_n|^2)= k o(\eps^2).
$$
	\item Passing to the limit as $n\to\infty$, we deduce that for all $k\geq 1,$
$$
\int_{\Om^\eps_k} (|D(\we)|^2 + |\we|^2 + |\na \we|^2)= k o(\eps^2).
$$

\end{enumerate}

There are two main differences with the estimates of Section \ref{sec2}.  The first one lies in terms of the type
$$
\int_{\Om^\eps_k}\left| \left((\we_n \cdot \nabla)\ueapp\right)\cdot \we_n \right|;
$$
indeed, because of the boundary layer term $v$, $\na \ueapp$ does not belong to $L^\infty(\Om^\eps)$ in general. Therefore, using Sobolev embeddings, we have 
\begin{eqnarray*}
	\int_{\Om^\eps_k} |\we_n|(x)^2 \left|\na v\left(\frac{x}{\eps}  \right)  \right|\: dx&=&\sum_{j=-k}^{k-1} 	\int_{\Om^\eps_{j,j+1}} |\we_n(x)|^2 \left|\na v\left(\frac{x}{\eps}  \right)  \right|\: dx\\
&\leq & \eps\sum_{j=-k}^{k-1} 	\left(\int_{\Om^\eps_{j,j+1}}|\we_n|^4\right)^{1/2}\left(\int_{\frac{j}{\eps}}^{\frac{j+1}{\eps}}\int_{\om(y_1)}^{1/\eps}|\na v(y)|^2\:dy\right)^{1/2}\\
&\leq & C\sqrt{\eps}\|\na v\|_{H^1_{uloc}(\Om^{bl})}\|\we_n\|_{H^1(\Om^\eps_k)}^2\leq C\sqrt{\eps}\phi\|\we_n\|_{H^1(\Om^\eps_k)}^2.
\end{eqnarray*}

The second difference comes from the boundary term $g^\eps$, namely
$$
\frac{1}{\lambda^0}\int_{\Gamma^\eps_n}\chi_k(\we_n)_\tau g^\eps\leq \frac{1}{2\lambda^0}\int_{\Gamma^\eps_n}\chi_k|(\we_n)_\tau|^2 + C(k+1) \frac{\eps^4}{\lambda^0}.
$$
The first term of the right-hand side can be absorbed in the boundary term coming from the integration by parts of $\int\Delta \we_n\cdot \we_n$. The second one is clearly $o((k+1)\eps^2).$ Notice that this is the reason why we need the additional boundary layer term $v^1$: if we merely take 
$$
\ueapp= u^0 + \eps v \left(\frac{x}{\eps}  \right) + \eps u^1(x) + \eps \re(x)
$$
then $g^\eps=O(\eps)$, and the second term in the right-hand side of the preceding inequality is $O((k+1)\eps^2/\lambda^0)$.

The inequality relating $E_k$ and $E_{k+1}$ (with the same notation as in Section \ref{sec2}) becomes in the present case
\begin{eqnarray*}
E_k&\leq &C\left( E_{k+1}-E_k + \sqrt{\eps\phi} E_{k+1} + \eps \eta(\eps) \sqrt{k+1}\sqrt{E_{k+1}}+ (k+1)\eps^4 \right)	\\
&+& C (\phi + \eps \eta(\eps)\sqrt{k+1})(E_{k=1}-E_k)^{3/2},
\end{eqnarray*}
for some function $\eta$ such that $\lim_{0⁺^+}\eta=0.$ By induction we infer easily that
$$
E_k\leq k\eps^2 \eta_1(\eps),
$$
for some other function $\eta_1$ vanishing at zero, which completes the second step described above. The two other steps are left to the reader.

We infer that 
$$
\sup_{R\geq 1}\frac{1}{R^{1/2}}\left\| \ue - \ueapp\right\|_{H^1(\Om^\eps_R)} = o(\eps)\quad\text{almost surely.}
$$

On the other hand, let $u^N$ be the solution of \eqref{NS}-\eqref{Navier} with $\lambda=6\phi\alpha \eps$. Then the function $u^N$ is explicit: as in \cite{GerMas}, we have
$$
u^N=(6\phi U^N(x_2), 0)\quad\text{with }U_N(x_2)=-\frac{1+\eps\alpha}{1+4\eps\alpha}x_2^2 +\frac{1}{1+4\eps\alpha}x_2+\frac{\eps\alpha}{1+4\eps\alpha},
$$
so that
$$
u^N= u^0 + 6\phi\eps(\alpha, 0) + \eps u^1 + O(\eps^2)\quad\text{in }L^2_{uloc}(\Omega).
$$
From there, we obtain
$$
\sup_{R\geq 1}\frac{1}{R^{1/2}}\left\| u^N- \ueapp\right\|_{L^2(\Om^\eps_R)} = o(\eps)\quad\text{almost surely.}
$$
Theorem \ref{Navestimates2} follows.

{\small
\def\cprime{$'$} \def\cprime{$'$} \def\cprime{$'$}

\end{document}